\documentclass[leqno]{amsproc}
\usepackage{times}
\usepackage{amsfonts,amssymb,amsmath,amsgen,amsthm}
\usepackage{hyperref}
\usepackage{color}
\usepackage{mathtools}
\newcommand{\msc}[2][2000]{%
  \let\@oldtitle\@title%
  \gdef\@title{\@oldtitle\footnotetext{#1 \emph{Mathematics subject
        classification.} #2}}%
}

\theoremstyle{plain}
\newtheorem{theorem}{Theorem} [section]

\newtheorem{lemma}[theorem]{Lemma}

\newtheorem{proposition}[theorem]{Proposition}

\theoremstyle{remark}
\newtheorem{remark}[theorem]{Remark}

\DeclarePairedDelimiter\abs{\lvert}{\rvert}%
\DeclarePairedDelimiter\norm{\lVert}{\rVert}%

\def\C{{\mathbb C}}
\def\R{{\mathbb R}}
\def\O{\mathcal O}
\def\F{\mathcal F}

\def\({\left(}
\def\){\right)}
\def\<{\left\langle}
\def\>{\right\rangle}
\def\le{\leqslant}
\def\ge{\geqslant}

\def\Eq#1#2{\mathop{\sim}\limits_{#1\rightarrow#2}}
\def\Tend#1#2{\mathop{\longrightarrow}\limits_{#1\rightarrow#2}}

\def\d{{\partial}}
\def\eps{\varepsilon}

\def\si{{\sigma}}

\DeclareMathOperator{\RE}{Re}
\DeclareMathOperator{\IM}{Im}

\numberwithin{equation}{section}

\begin{document}

\title[LogNLS with potential]{Logarithmic Schr\"odinger equation with
  quadratic potential} 
\author[R. Carles]{R\'emi Carles}
\address{Univ Rennes, CNRS\\ IRMAR - UMR 6625\\ F-35000
  Rennes, France}
\email{Remi.Carles@math.cnrs.fr}

\author[G. Ferriere]{Guillaume Ferriere}
\address{Institut de Recherche Math\'ematique Avanc\'ee\\ UMR 7501
Universit\'e de Strasbourg et CNRS\\
7 rue Ren\'e Descartes\\ 67000 Strasbourg\\ France}
\email{guillaume.ferriere@math.unistra.fr}
\begin{abstract}
  We analyze dynamical properties of the logarithmic Schr\"odinger
  equation under a quadratic potential. The sign of the
  nonlinearity is such that it is known that in the absence of external potential,
  every solution is dispersive, with a universal asymptotic
  profile. The introduction of a harmonic 
  potential generates solitary waves, corresponding to generalized
  Gaussons. We prove that they are orbitally stable,
  using an inequality related to relative entropy, which may be thought of as dual to
  the classical logarithmic Sobolev inequality. In the case of a
  partial confinement, we show a universal dispersive behavior for
  suitable marginals. For repulsive harmonic potentials, the
  dispersive rate is dictated by the potential, and no universal
  behavior must be expected.
\end{abstract}

\thanks{RC is supported by Rennes M\'etropole through its AIS program. }

\maketitle

\section{Introduction}
\label{sec:intro}

We consider the logarithmic Schr\"odinger equation in the presence of
an external potential,
\begin{equation}
  \label{eq:logNLSpot}
  i\d_t u +\frac{1}{2} \Delta u =V (x)u+ \lambda \ln\(|u|^2\)u  ,\quad u_{\mid t=0} =u_0  ,
\end{equation}
with $x\in \R^d$, $d\ge 1$, and $\lambda\in \R$. The potential $V$ is smooth and
real-valued, $V\in C^\infty(\R^d;\R)$, and the main results of the
present paper concern the case where $V$ is quadratic in $x$. This
equation is Hamiltonian; mass and energy are formally
independent of time:
\begin{equation}
  \label{eq:conserv}
 \begin{aligned}
  & M(u(t))=\|u(t)\|_{L^2(\R^d)}^2,\\
  & E(u(t)):=\frac{1}{2}\|\nabla u(t)\|_{L^2(\R^d)}^2+\int_{\R^d}
    V(x)|u(t,x)|^2dx\\
  &\phantom{ E(u(t)):=}+\lambda\int_{\R^d}
    |u(t,x)|^2\(\ln|u(t,x)|^2-1\)dx.
\end{aligned} 
\end{equation}
On the other hand, due to the presence of the
  potential $V$, the momentum, 
\[\IM \int_{\R^d} \bar u(t,x)\nabla u(t,x)dx,\] 
is not
  conserved. As it will not be considered in this paper, we do not
  discuss the evolution of this quantity.
The logarithmic nonlinearity was introduced in \cite{BiMy76} to
satisfy the following tensorization property: if $V$ decouples space
variables in the sense that
\begin{equation*}
  V(x) =\sum_{j=1}^d V_j(x_j),
\end{equation*}
where $V_j\in C^\infty(\R;\R)$, and if the initial datum is a tensor
product,
\begin{equation*}
  u_0(x) =\prod_{j=1}^d u_{0j}(x_j),
\end{equation*}
then the solution to \eqref{eq:logNLSpot} is given by
\begin{equation*}
  u(t,x) =\prod_{j=1}^d u_{j}(t,x_j), 
\end{equation*}
where each $u_j$ solves a one-dimensional equation,
\begin{equation*}
   i\d_t u_j +\frac{1}{2} \d_{x_j}^2 u_j =V_j (x_j)u_j+ \lambda
   \ln\(|u_j|^2\)u_j  ,\quad u_{j\mid t=0} =u_{0j} . 
 \end{equation*}
 Of course, a similar property remains if e.g. $V(x_1,x_2,x_3) =
 V_{12}(x_1,x_2)+V_3(x_3)$, and $u_0(x_1,x_2,x_3) =
 u_{012}(x_1,x_2)u_{03}(x_3)$. As can easily be checked, the above
 logarithmic nonlinearity is the only one satisfying this
 tensorization property.  This property will be evoked several times
 in the rest of this paper.
\smallbreak
 
The logarithmic nonlinearity has since been adopted in several
 physical models, in
quantum mechanics \cite{yasue},
quantum optics \cite{BiMy76,hansson,KEB00,buljan}, nuclear
physics \cite{Hef85}, Bohmian mechanics \cite{DMFGL03}, effective
quantum gravity \cite{Zlo10}, theory of 
superfluidity and  Bose-Einstein condensation \cite{BEC}. See also
\cite{ScottShertzer18,ShertzerScott20}. The papers
\cite{Zlo10,Zlo11,Bouharia2015} have provided evidences that the
logarithmic model may generalize the Gross-Pitaevskii equation, used in
the case of two-body interaction, to the case of three-body
interaction. 
\smallbreak

As proposed in \cite{Zlo10,Zlo11}, the logarithmic nonlinearity may
appear as a relevant model to provide a universal mechanism describing
the deformation of the quantum wave equation due to non-trivial
vacuum. It thus appears as a serious candidate to extend quantum
mechanics thanks to a nonlinear model, likely to help understand
quantum gravity. In the absence of external potential ($V=0$), the
case $\lambda<0$ is certainly the most physically relevant: stationary
solutions known as Gaussons are present and stable under the dynamics
(see below), while for $\lambda>0$, (enhanced) dispersion is always present; see
Theorem~\ref{theo:logNLSdisp} for a complete mathematical statement,
showing that it is indeed hopeless to look for stable structures if
$\lambda>0$. 
\smallbreak

In \cite{Bouharia2015}, the presence of an harmonic trap
was considered, in order to describe logarithmic BEC. As in the case
without potential, an important feature of the model is that many
Gaussian solutions are available, as explored into more details in
Section~\ref{sec:gaussian}. Stationary solutions (generalized Gaussons)
are still available in the case $\lambda<0$. Their stability was
investigated numerically in \cite{Bouharia2015}, and 
analyzed mathematically in \cite{ACS20}. The presence of the harmonic
trap implies that stable structures are now also present in the case
$\lambda>0$, as shown numerically in \cite{Bouharia2015}, and analyzed
more precisely in the present paper.

\smallbreak

Considering for $V$ a harmonic potential is not only physically
relevant, it  has also a natural mathematical motivation, which we now
explain.

\subsection{Mathematical background}
\label{sec:background}

The first mathematical study of \eqref{eq:logNLSpot} appears in
\cite{CaHa80} (see also \cite{CazCourant}), where the Cauchy problem is addressed for $V\in
L^p(\R^d)+L^\infty(\R^d)$, with $p\ge 1$ and $p>d/2$. As noticed in
\cite{BiMy79}, in the 
case $\lambda<0$ with $V=0$, \eqref{eq:logNLSpot} admits a solitary
wave of the form $e^{i\omega t }\phi(x)$, where $\phi$ is a Gaussian,
called \emph{Gausson}. The orbital stability of this solitary wave was
established in \cite{Caz83} in the radial case, and in \cite{Ar16} in the
general case.  Still in the case $\lambda<0$ with $V=0$, it was shown
in \cite{CaHa80} that if $u_0\in W$, the energy space given by
\begin{equation*}
  W=\left\{ f\in H^1(\R^d),\quad |f|^2\ln |f|^2\in L^1(\R^d)\right\},
\end{equation*}
then there exists a unique, global, solution $u\in
L^\infty(\R;W)$  (see also \cite{GuLoNi10,HayashiM2018}), and in
\cite{Caz83} that no such solution is 
dispersive. The global Cauchy problem in the case $V=0$, $\lambda>0$ was
treated in \cite{GuLoNi10} and \cite{CaGa18}.
\smallbreak

As noticed in \cite{BiMy76}, in the case $V=0$, $\lambda\in \R$, if $u_0$
is a Gaussian, then $u(t,\cdot)$ is a Gaussian for all time. The
evolution of Gaussians was analyzed more precisely in
\cite{BCST,CaGa18,FeDCDS}: in the case $\lambda<0$, they behave like
breathers, in the sense that $|u(t,\cdot)|$ is periodic in time, while
if $\lambda>0$, they all are dispersive with a similar rate
$\tau(t)$, and $\tau(t)^{d}
|u(t,x\tau(t))|^2$ has a universal limit, that is, a Gaussian whose
variance does not depend on the initial Gaussian $u_0$. 
\smallbreak

The latter property has been established for initial data which are
not necessarily Gaussian, in \cite{CaGa18}, and the description of the
convergence was refined in \cite{FeAPDE}. Denote
\begin{equation*}
  \Sigma =H^1\cap \F(H^1)= \left\{ f\in H^1(\R^d),\quad x\mapsto |x| f(x)\in L^2(\R^d)\right\}.
\end{equation*}
\begin{theorem}[\cite{CaGa18,FeAPDE}]\label{theo:logNLSdisp}
  Let $V=0$ and $\lambda>0$. For $u_0\in \Sigma\setminus\{0\}$, \eqref{eq:logNLSpot}
  has a unique solution $u\in L^\infty_{\rm
    loc}(\R;\Sigma))$. Introduce the solution $\tau\in C^\infty(\R)$ to the ODE
  \begin{equation}\label{eq:tau-libre}
  \ddot \tau = \frac{2\lambda }{\tau} \, ,\quad \tau(0)=1\, ,\quad \dot
  \tau(0)=0\, .
\end{equation}
Then, as
$t\to \infty$, 
$\tau(t)\sim 2t \sqrt{\lambda \ln t}$ and $ \dot
  \tau(t)\sim 2\sqrt{\lambda\ln  t} $. 
Introduce
$\gamma(x):=e^{-|x|^2/2}$, and 
 rescale the solution  to $v=v(t,y)$ by setting
\begin{equation}
  \label{eq:uvDMJ}
  u(t,x)
  =\frac{1}{\tau(t)^{d/2}}v\left(t,\frac{x}{\tau(t)}\right)
\frac{\|u_0\|_{L^2({\mathbb R}^d)}}{\|\gamma\|_{L^2({\mathbb R}^d)}} 
\exp \Big({i\frac{\dot\tau(t)}{\tau(t)}\frac{|x|^2}{2}} \Big) . 
\end{equation}
Then we have
\begin{equation*}
   \int_{{\mathbb R}^d}
  \begin{pmatrix}
    1\\
y\\
|y|^2
  \end{pmatrix}
|v(t,y)|^2dy\Tend t \infty 
 \int_{{\mathbb R}^d}
  \begin{pmatrix}
    1\\
y\\
|y|^2
  \end{pmatrix}
  \gamma^2(y)dy ,
\end{equation*}
and
\begin{equation*}
  |v(t,\cdot)|^2 \mathop{\rightharpoonup}\limits_{t\to \infty}
  \gamma^2 
\quad  \text{weakly in }L^1({\mathbb R}^d)  . 
\end{equation*}
Finally, denoting by  $  W_1$ the Wasserstein distance\footnote{For $\nu_1$
and $\nu_2$ probability measures, 
\begin{equation*}
  W_1(\nu_1,\nu_2)=\inf \left\{ \int_{{\mathbb R}^d\times
    {\mathbb R}^d}|x-y|d\mu(x,y);\quad (\pi_j)_\sharp \mu=\nu_j\right\},
\end{equation*}
where $\mu$ varies among all probability measures on ${\mathbb R}^d\times
{\mathbb R}^d$, and $\pi_j:{\mathbb R}^d\times {\mathbb R}^d\to {\mathbb R}^d$ denotes the canonical
projection onto the $j$-th factor. See e.g. \cite{Vi03}},
there exists $C$ such that
\begin{equation*}
 W_1\(\frac{|v(t)|^2}{\pi^{d/2}},\frac{\gamma^2}{\pi^{d/2}}\)\le
  \frac{C}{\sqrt{\ln t}},\quad t\ge e.
\end{equation*}
\end{theorem}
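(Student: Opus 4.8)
We only outline the argument, referring to \cite{CaGa18,FeAPDE} for the details. The plan is to reduce everything, via the change of unknown \eqref{eq:uvDMJ}, to a rescaled solution whose modulus is forced to converge to $\gamma^2$, and to read off the $W_1$ rate from a monotone Lyapunov functional combined with logarithmic Sobolev-type inequalities.

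\emph{Cauchy problem.} Since $z\mapsto z\ln|z|^2$ is not locally Lipschitz near $0$, one first solves the equation with $\ln|u|^2$ replaced by a Lipschitz approximation such as $\ln\frac{|u|^2}{1+\eps|u|^2}$; the bound $\int|u|^2\bigl|\ln|u|^2\bigr|\lesssim 1+\|u\|_\Sigma^2$, together with conservation of mass and energy \eqref{eq:conserv} and the virial identity $\frac{d^2}{dt^2}\int|x|^2|u|^2=2\|\nabla u\|_{L^2}^2+2d\lambda M$, gives after absorption an a priori control of $\int|x|^2|u|^2$, hence of $\|u\|_\Sigma$, on any bounded time interval; one then passes to the limit $\eps\to0$. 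Uniqueness follows from $\RE\bigl[(\ln|a|^2-\ln|b|^2)(|a|^2-|b|^2)\bigr]\ge0$: testing the equation for the difference of two solutions against that difference yields a closed Gr\"onwall inequality in $L^2$.

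\emph{Rescaling and the Lyapunov functional.} Let $\tau$ solve \eqref{eq:tau-libre}; then $\dot\tau^2=4\lambda\ln\tau$ (whence the stated asymptotics), $\dot\tau>0$ for $t>0$, and $\tau\ddot\tau=2\lambda$. Inserting \eqref{eq:uvDMJ} into \eqref{eq:logNLSpot} with $V=0$ and removing a purely time-dependent phase shows that $v=v(t,y)$ solves
\[
  i\d_t v+\frac{1}{2\tau(t)^2}\Delta v=\lambda|y|^2 v+\lambda\ln\bigl(|v|^2\bigr)v,\qquad \|v(t)\|_{L^2}^2\equiv\|\gamma\|_{L^2}^2=\pi^{d/2}.
\]
In the rescaled frame the dispersion has become a confining harmonic term of fixed strength while the Laplacian is damped by $1/\tau^2\to0$; heuristically, in the limit $\lambda|y|^2 v+\lambda\ln(|v|^2)v=0$, i.e. $|v|^2=e^{-|y|^2}=\gamma^2$, which is also the unique minimiser of the relative entropy $\mathcal H(v):=\int|v|^2\ln\frac{|v|^2}{\gamma^2}\,dy=\int|v|^2(\ln|v|^2-1)\,dy+\int|y|^2|v|^2\,dy+\pi^{d/2}$ (nonnegative by Jensen) among $L^2$ functions of mass $\pi^{d/2}$. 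The crucial point is that
\[
  \Phi(t):=\frac{1}{2\tau(t)^2}\|\nabla v(t)\|_{L^2}^2+\lambda\,\mathcal H(v(t))
\]
---which, up to the additive constant $\lambda\pi^{d/2}$, is the natural energy of the equation for $v$ with $\tau$ frozen---satisfies $\dot\Phi(t)=\d_t\!\bigl(\tfrac1{2\tau^2}\bigr)\|\nabla v\|_{L^2}^2=-\tfrac{\dot\tau}{\tau^3}\|\nabla v\|_{L^2}^2\le0$, and $\Phi\ge0$; thus $\Phi$ is nonincreasing and bounded below.

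\emph{Convergence.} Boundedness of $\Phi$ bounds $\mathcal H(v(t))$ and, via the expression for $\mathcal H$ above together with a lower bound on $\int|v|^2\ln|v|^2$, bounds $\int|y|^2|v(t)|^2$; hence $(|v(t)|^2)_t$ is tight and uniformly integrable. Moreover $\int_0^\infty\tfrac{\dot\tau}{\tau^3}\|\nabla v\|_{L^2}^2\,dt=\Phi(0)-\lim_t\Phi<\infty$. A compactness argument using this bound, the virial identity for $\int|y|^2|v|^2$ in the rescaled frame (which forces $\int|y|^2|v(t)|^2\to\tfrac d2\pi^{d/2}$), and the fact that $\gamma^2$ is the unique entropy minimiser under these moment constraints then identifies the limit: $|v(t)|^2\rightharpoonup\gamma^2$ weakly in $L^1$, $\int(1,y,|y|^2)|v(t)|^2\,dy\to\int(1,y,|y|^2)\gamma^2\,dy$, $\mathcal H(v(t))\to0$, and $\tfrac1{\tau^2}\|\nabla v(t)\|_{L^2}^2\to0$ (equivalently $\lim_t\Phi=0$).

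\emph{The rate.} This is the delicate step. One quantifies the convergence $\Phi(t)\to0$ by exploiting the precise growth of $\tau$ and the coupling among $\|\nabla v\|_{L^2}^2$, $\int|y|^2|v|^2$ and $\mathcal H(v)$---the crucial ingredient being logarithmic Sobolev-type inequalities---so as to obtain $\mathcal H(v(t))=O(1/\ln t)$. The Csisz\'ar--Kullback--Pinsker inequality then gives $\bigl\||v(t)|^2/\pi^{d/2}-\gamma^2/\pi^{d/2}\bigr\|_{L^1}^2\lesssim\mathcal H(v(t))$, and Talagrand's transport--entropy inequality for the Gaussian $\gamma^2/\pi^{d/2}$ yields $W_1\bigl(|v(t)|^2/\pi^{d/2},\gamma^2/\pi^{d/2}\bigr)^2\le W_2\bigl(|v(t)|^2/\pi^{d/2},\gamma^2/\pi^{d/2}\bigr)^2\lesssim\mathcal H(v(t))\lesssim 1/\ln t$ for $t\ge e$. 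The main obstacle is precisely this last step---obtaining the sharp rate rather than mere convergence---for which I would follow \cite{FeAPDE}.
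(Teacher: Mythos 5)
Your sketch follows the right overall strategy (regularization for the Cauchy problem, the rescaling \eqref{eq:uvDMJ}, the monotone energy of the non-autonomous equation for $v$, convergence of moments), and up to the a priori estimates it matches what the paper reproduces in Sections 5--6 for its variants of this theorem. But the last two steps contain a genuine gap. Your identification of the weak limit and your derivation of the $W_1$ rate both rest on the claim that the relative entropy $\mathcal H(v(t))=\int |v|^2\ln\big(|v|^2/\gamma^2\big)$ tends to $0$, indeed with rate $O(1/\ln t)$. Nothing in your argument yields this: the functional $\Phi=\tfrac1{2\tau^2}\|\nabla v\|_{L^2}^2+\lambda\mathcal H(v)$ is nonincreasing and nonnegative, so it has a limit $\Phi_\infty\ge 0$, but there is no mechanism forcing $\Phi_\infty=0$; the integrated dissipation $\int_0^\infty \tfrac{\dot\tau}{\tau^3}\|\nabla v\|^2\,dt<\infty$ only controls $\liminf$ of the kinetic part along a subsequence and says nothing about $\mathcal H$. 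Worse, if $\mathcal H(v(t))\to 0$ held, Csisz\'ar--Kullback would give \emph{strong} $L^1$ convergence of $|v(t)|^2$ to $\gamma^2$, which is strictly stronger than the weak convergence asserted in the theorem and is precisely what is not known for general $\Sigma$ data (it is why the conclusion is phrased in $W_1$, which metrizes weak convergence of probability measures, rather than in $L^1$). The actual proof in \cite{CaGa18,FeAPDE} --- mirrored in Section~\ref{sec:partial} of this paper --- identifies the weak limit and gets the $1/\sqrt{\ln t}$ rate from the Madelung (hydrodynamic) system for $\rho=|v|^2$ and $j=\IM(\bar v\nabla v)$: the momentum equation forces $\nabla_y\rho+2y\rho\to 0$ distributionally (pinning down the limit as a multiple of $e^{-|y|^2}$, with the mass fixing the constant), and the Wasserstein estimate is obtained directly from this system and the a priori bounds, not from an entropy--transport inequality. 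Your moment constraints alone (mass, center of mass, second moment) do not single out $\gamma^2$ among nonnegative $L^1$ densities, so the compactness argument as you state it does not close.

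A secondary, smaller point: for uniqueness, the inequality you invoke, $\big(\ln|a|^2-\ln|b|^2\big)\big(|a|^2-|b|^2\big)\ge 0$, is true but is not what the $L^2$ estimate for the difference produces; that estimate produces $\IM\big[(u_2\ln|u_2|^2-u_1\ln|u_1|^2)(\bar u_2-\bar u_1)\big]$, and closing the Gr\"onwall inequality requires the nontrivial pointwise bound $\big|\IM\big((z_2\ln|z_2|^2-z_1\ln|z_1|^2)(\bar z_2-\bar z_1)\big)\big|\le 4|z_2-z_1|^2$ of Cazenave--Haraux (Lemma~\ref{lem:unique} in the paper).
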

The case where $V$ is harmonic, $V(x) = |x|^2$, was considered in
\cite{ACS20}, in the case $\lambda<0$: there exists an analogue of the
Gausson, that is a solution of the form $b_0e^{i\omega t
}e^{-a_0|x|^2/2}$, and it is orbitally stable. In the following
statement, we adapt the numerical values to \eqref{eq:logNLSpot} and
unify the statements from \cite{Ar16} (without potential) and
\cite{ACS20} (with potential, where \eqref{eq:logNLSpot} is no longer 
translation invariant):
\begin{theorem}[\cite{Ar16,ACS20}]\label{theo:stabGaussonHarmo}
  Let $d\ge 1$ and $\lambda<0$. Suppose that
  \begin{equation*}
    V(x) = \frac{\kappa(\kappa+2\lambda)}{2}|x|^2,\quad \kappa\ge
    -2\lambda>0. 
  \end{equation*}
  Then the (generalized) Gausson is given by $\phi_\nu(x)
  =e^{-\frac{\nu+\kappa d/2}{2\lambda}} e^{-\kappa
    |x|^2/2}$, for $\nu\in \R$. It  generates a standing wave $u(t,x) = \phi_\nu(x)
  e^{i\nu t}$ solution to \eqref{eq:logNLSpot}, which is
  orbitally stable in the energy space:
  For any $\eps>0$, there exists $\eta>0$ such that if $u_0\in X$
satisfies $\|u_0-\phi_\nu\|_X<\eta$, then the solution $u$ to
\eqref{eq:logNLSpot} exists for all $t\in \R$, and
\begin{itemize}
\item \emph{Case without potential:} $\kappa=-2\lambda$ and $X=W$,
  \begin{equation*}
    \sup_{t\in \R}\inf_{\theta\in \R}\inf_{y\in \R^d}\|u(t)
    -e^{i\theta}\phi_\nu(\cdot-y)\|_W<\eps.
  \end{equation*}
 \item \emph{Case with potential:}  $\kappa>-2\lambda$ and $X=\Sigma$, 
 \begin{equation*}
    \sup_{t\in \R}\inf_{\theta\in \R}\|u(t)
    -e^{i\theta}\phi_\nu\|_\Sigma<\eps.
  \end{equation*}
 \end{itemize}
\end{theorem}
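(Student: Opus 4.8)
\medskip
\noindent\textit{Setup and reduction.}
Set $\mu:=-\lambda>0$ and $\Omega:=\kappa(\kappa+2\lambda)\ge 0$, so $V(x)=\tfrac\Omega2|x|^2$, and write $S_\nu(u):=E(u)+\nu M(u)$, of which $\phi_\nu$ is a critical point. The spatial profile $e^{-\kappa|x|^2/2}$ of $\phi_\nu$ does not depend on $\nu$, so $(\phi_\nu)_\nu$ is a family of fixed multiples of one Gaussian: $\nu\mapsto M(\phi_\nu)$ is a smooth increasing bijection of $\R$ onto $(0,\infty)$, and $M(\phi_{\nu'})=m$ forces $\phi_{\nu'}=\sqrt{m/M(\phi_\nu)}\,\phi_\nu$. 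Global existence in $\Sigma$ (resp.\ $W$) is classical and follows from conservation of $M,E$ and the logarithmic Sobolev inequality, the energy being coercive exactly when $\kappa\ge-2\lambda$. By the standard contradiction argument the theorem reduces to the coercivity estimate
\[
\inf_{\theta\in\R}\|u-e^{i\theta}\phi_\nu\|_\Sigma^2\ \lesssim\ S_\nu(u)-S_\nu(\phi_\nu)\qquad\text{whenever } M(u)=M(\phi_\nu),
\]
with a further infimum over translations when $V=0$, $X=W$: a destabilising sequence $u_{0,n}\to\phi_\nu$ gives, after replacing $\phi_\nu$ by the mass-matched Gausson $\phi_{\nu_n}$ and using continuity of $E$, $M$ and $\nu\mapsto\phi_\nu$, solutions $u_n$ and times $t_n$ along which $S_{\nu_n}(u_n(t_n))-S_{\nu_n}(\phi_{\nu_n})\to0$, contradicting $\inf_\theta\|u_n(t_n)-e^{i\theta}\phi_\nu\|_\Sigma\ge\eps_0$.

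\medskip
\noindent\textit{A dual logarithmic Sobolev identity.}
Write $u=|u|e^{i\theta}$, $\rho=|u|^2$. I would expand $S_\nu(u)$ and rewrite $\int\rho\ln\rho$ as the relative entropy of $\rho$ with respect to the Gaussian density $\phi_\nu^2/M(\phi_\nu)$ plus the (nonnegative) deficit $\delta(\rho)$ in the sharp Euclidean logarithmic Sobolev inequality, which Gaussians saturate; the crucial point is that this Gaussian reference --- carrying the \emph{same} frequency $\kappa$ as the potential --- is precisely the one for which the resulting weighted term has the favourable sign. A direct computation then produces the exact identity
\[
S_\nu(u)-S_\nu(\phi_\nu)=\Big(\tfrac12-\tfrac{\mu}{\kappa}\Big)\|\nabla|u|\|_{L^2}^2+\tfrac12\int\rho|\nabla\theta|^2\,dx+\tfrac\Omega2\||x|u\|_{L^2}^2+\mu M(u)\,\delta(\rho)+\mu M(\phi_\nu)\,G\!\Big(\tfrac{M(u)}{M(\phi_\nu)}\Big),
\]
where $G(s)=s\big(-\ln s+d+1-\tfrac{\kappa d}{2\mu}\big)-1$ is concave and $\le0$, equal to $0$ at $s=1$ only if $\kappa=-2\lambda$. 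The hypothesis $\kappa\ge-2\lambda$ gives $\tfrac12-\tfrac\mu\kappa=\tfrac{\kappa+2\lambda}{2\kappa}\ge0$ and $\Omega\ge0$, so the first four terms are nonnegative.

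\medskip
\noindent\textit{Coercivity from the harmonic oscillator.}
On the constraint $M(u)=M(\phi_\nu)=:m$ the last term is the fixed constant $-\tfrac{d(\kappa+2\lambda)}{2}m$, and with $a:=\tfrac{\kappa+2\lambda}{2\kappa}$, $b:=\tfrac\Omega2$ the remaining ``profile'' part $a\|\nabla|u|\|_{L^2}^2+b\||x|u\|_{L^2}^2-\tfrac{d(\kappa+2\lambda)}{2}m$ is exactly $\langle(H-\lambda_0)|u|,|u|\rangle$ for the harmonic oscillator $H=-a\Delta+b|x|^2$, whose ground state is $\propto e^{-\tfrac12\sqrt{b/a}\,|x|^2}=e^{-\kappa|x|^2/2}\propto\phi_\nu$ and whose lowest eigenvalue is $\lambda_0=d\sqrt{ab}=\tfrac{d(\kappa+2\lambda)}{2}$. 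In the case $\kappa>-2\lambda$ one has $a,b>0$ and spectral gap $2\sqrt{ab}=\kappa+2\lambda>0$, so the identity yields
\[
S_\nu(u)-S_\nu(\phi_\nu)\ \ge\ (\kappa+2\lambda)\,\|P_\perp|u|\|_{L^2}^2+\tfrac12\int\rho|\nabla\theta|^2\,dx,
\]
$P_\perp$ being the $L^2$-orthogonal projection off $\phi_\nu$. Since $|u|\ge0$, $\phi_\nu>0$ and $\|u\|_{L^2}=\|\phi_\nu\|_{L^2}$, one upgrades $\|P_\perp|u|\|_{L^2}^2$ to $\||u|-\phi_\nu\|_\Sigma^2$ (on real functions $\|\cdot\|_\Sigma$ is comparable to $\langle(H+1)\cdot,\cdot\rangle^{1/2}$ because $a,b>0$, and $|u|-\phi_\nu$ differs from $P_\perp|u|$ only by a multiple of $\phi_\nu$ quadratically small in $\|P_\perp|u|\|_{L^2}$). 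Together with the control of $\int\rho|\nabla\theta|^2$, this gives the coercivity estimate.

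\medskip
\noindent\textit{Main obstacle.}
The conceptual heart is the identity above and the fact that, under the mass constraint, its nonnegative part is minimised exactly at $\phi_\nu$ by the ground-state characterisation of $H$. The one analytically delicate step is recovering closeness of $u$ itself --- not just $|u|$ --- to $e^{i\theta}\phi_\nu$: from $\int\rho|\nabla\theta|^2$ small and $|u|$ close to $\phi_\nu$ one extracts a constant phase through a Poincar\'e inequality on balls where $\phi_\nu$ is bounded below, the exterior contribution being absorbed by the uniform $\Sigma$-tail bound the identity also supplies. The borderline case $\kappa=-2\lambda$ (no potential, $X=W$) escapes this scheme --- there $a=\Omega=0$, the coercivity degenerates, and the $|x|$-weighted quantities need not be finite in $W$ --- and is handled instead, as in \cite{Ar16}, by proving that $\phi_\nu$ is the unique minimiser of $S_\nu$ on the mass constraint up to translations and phase and concluding by concentration--compactness.
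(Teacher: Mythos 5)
This statement is quoted from \cite{Ar16,ACS20}; the paper gives no proof of it, only of the analogous result for $\lambda>0$ (Theorem~\ref{theo:stab-orb}, Section~\ref{sec:full}), and it records that the cited proofs, like its own, follow the Cazenave--Lions scheme: characterize the ground-state set $\mathcal{G}_\nu$ on the Nehari manifold as $\{e^{i\theta}\phi_\nu\}$ by combining the (for $\lambda<0$: ordinary) logarithmic Sobolev inequality with the ground-state inequality for the harmonic oscillator $-\Delta+\kappa^2|x|^2$, prove strong $\Sigma$-compactness of minimizing sequences via Br\'ezis--Lieb, and conclude by contradiction. Your proposal uses exactly the same two analytic ingredients — the sharp log-Sobolev inequality saturated by $e^{-\kappa|x|^2}$ and the oscillator whose ground state is $e^{-\kappa|x|^2/2}$ — but assembles them into an exact energy identity rather than a compactness argument. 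I checked your identity: the coefficient $\tfrac12-\tfrac{\mu}{\kappa}=\tfrac{\kappa+2\lambda}{2\kappa}$, the oscillator $-a\Delta+b|x|^2$ with $\sqrt{b/a}=\kappa$ and bottom eigenvalue $\lambda_0=\tfrac{d(\kappa+2\lambda)}{2}$, and the value $G(1)=d(1-\tfrac{\kappa}{2\mu})$ all fit together so that the right-hand side vanishes at $u=\phi_\nu$ and is a sum of nonnegative terms plus the oscillator quadratic form on the mass constraint. So the ``profile'' half of your coercivity bound is correct for $\kappa>-2\lambda$, and it yields a quantitative stability estimate, which is more than the qualitative statement the compactness route delivers; deferring the degenerate case $\kappa=-2\lambda$ to \cite{Ar16} is legitimate, since there $a=\Omega=0$ and the identity collapses.

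The gap is the final step, which you flag but do not close: passing from ``$\||u|-\phi_\nu\|_\Sigma$ small and $\int\rho|\nabla\theta|^2$ small'' to ``$\inf_{\theta_0}\|u-e^{i\theta_0}\phi_\nu\|_\Sigma$ small''. First, the splitting $\|\nabla u\|_{L^2}^2=\|\nabla|u|\|_{L^2}^2+\int\rho|\nabla\theta|^2$ is only formal for $u\in H^1$: $\theta$ is undefined on $\{u=0\}$, and the correct quantity is $\int_{\{u\neq0\}}|u|^{-2}|\IM(\bar u\nabla u)|^2$. Second, and more seriously, the $\Sigma$-distance contains $\|\nabla u-e^{i\theta_0}\nabla\phi_\nu\|_{L^2}$; writing $\nabla u=e^{i\theta}(\nabla|u|+i|u|\nabla\theta)$, you need $e^{i\theta}$ close to $e^{i\theta_0}$ in $L^2$ against the weight $|\nabla\phi_\nu|^2$, not merely $\phi_\nu^2$, and a Poincar\'e inequality on balls controls $\theta-\bar\theta$ only where a continuous branch of the phase exists, which the possible zero set of $u$ obstructs. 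None of this is fatal, but it is precisely the difficulty that the Cazenave--Lions route of the paper and of \cite{ACS20} is built to avoid: there, minimizing sequences converge strongly in $\Sigma$ to an element of $\mathcal{G}_\nu$, so modulus and phase never need to be reconstructed separately. As written, your argument is complete only up to this step.
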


\subsection{Main results}
\label{sec:main}

To make things clear, we first show that the Cauchy problem
\eqref{eq:logNLSpot} is 
globally well-posed in $\Sigma$, provided that the potential $V$ is
smooth and at most quadratic. This naturally generalizes the results
known in the case of a power nonlinearity (see e.g. \cite{Ca11}). 
\begin{proposition}\label{prop:cauchy}
  Let $V\in C^\infty(\R^d;\R)$, at most quadratic, in the sense that
  $\d^\alpha V\in L^\infty(\R^d)$ as soon as $|\alpha|\ge 2$. For
  $u_0\in \Sigma$, there exists a unique solution $u\in L^\infty_{\rm
    loc}(\R;\Sigma)\cap C(\R;L^2(\R^d))$ to \eqref{eq:logNLSpot}. Moreover, the mass
  $M(u(t))$ and the energy $E(u(t))$ are
  independent of time.  
\end{proposition}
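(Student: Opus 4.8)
The plan is to combine the standard strategy for the logarithmic Schr\"odinger equation (regularize the nonlinearity, solve an approximate problem, pass to the limit) with the standard strategy for Schr\"odinger equations with at-most-quadratic potentials (work in the space $\Sigma$, where the operator $-\tfrac12\Delta + V$ is essentially self-adjoint and the propagator $e^{-it(-\frac12\Delta+V)}$ is well-behaved on $\Sigma$). First I would recall that, because $V$ is smooth and at most quadratic, the Cauchy problem for the linear equation $i\d_t u + \tfrac12\Delta u = V u$ is globally well-posed on $L^2$ and on $\Sigma$, with a unitary propagator $U(t)$ on $L^2$ that maps $\Sigma$ to $\Sigma$ with locally bounded norm (this is classical, cf.\ \cite{Ca11}). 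This also yields the local-in-time Strichartz estimates we may need, although for an energy-space argument in $d\ge 1$ one can essentially work with $L^2$ and $H^1$ directly once the nonlinearity is tamed.

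Next I would treat the logarithmic nonlinearity. The map $z\mapsto z\ln|z|^2$ is not Lipschitz near $z=0$, so I would introduce a family of regularized nonlinearities $f_\eps$ (for instance $f_\eps(z) = z\ln\frac{|z|^2+\eps}{1+\eps|z|^2}$, or a cutoff version) which are globally Lipschitz on $\C$, preserve the gauge invariance $f_\eps(e^{i\theta}z)=e^{i\theta}f_\eps(z)$, and converge to $z\ln|z|^2$ as $\eps\to 0$ in a suitable sense. For fixed $\eps>0$, a standard fixed-point argument using the Duhamel formula with propagator $U(t)$ gives a unique global solution $u^\eps\in C(\R;\Sigma)$; global existence is immediate from the Lipschitz bound and Gronwall, once one also controls the $\Sigma$-norm, which is done by differentiating the equation in $x$ and commuting $x$ and $\nabla$ through $U(t)$ (here one uses that $[\,\cdot\,,U(t)]$ is controlled because $V$ is at most quadratic). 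The energy functional $E_\eps$ associated with $f_\eps$ is conserved, and mass and angular momentum are conserved too; these conservation laws, together with the logarithmic Sobolev inequality to control the (possibly negative) entropy term, give an $\eps$-uniform bound on $\|u^\eps(t)\|_\Sigma$ on any compact time interval — note the bound grows at most in a controlled (say exponential) way in $t$ because of the potential, but that is harmless for building a global solution.

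Then I would pass to the limit $\eps\to 0$. Uniform $\Sigma$-bounds plus the equation give uniform bounds on $\d_t u^\eps$ in a weak space, so by Aubin--Lions / Arzel\`a--Ascoli one extracts a subsequence converging in $C_{\rm loc}(\R;L^2)$ and weakly-$*$ in $L^\infty_{\rm loc}(\R;\Sigma)$; the key point is that the nonlinear term $f_\eps(u^\eps)\to u\ln|u|^2$ in $L^1_{\rm loc}$ in space-time, which follows from strong $L^2_{\rm loc}$ convergence together with the uniform entropy bound and the elementary continuity estimate $\big|\,|a|\ln|a|^2 - |b|\ln|b|^2\,\big|\lesssim |a-b|(1+|\ln|a-b||)+\dots$ (the standard Cazenave-type estimate). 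This identifies the limit $u$ as a solution in $L^\infty_{\rm loc}(\R;\Sigma)\cap C(\R;L^2)$. Uniqueness follows from the classical estimate of \cite{CaHa80}: for two solutions, $\frac{\dd}{\dd t}\|u_1-u_2\|_{L^2}^2$ is controlled using $\IM\big(\,(z_1\ln|z_1|^2 - z_2\ln|z_2|^2)\overline{(z_1-z_2)}\,\big)\le C|z_1-z_2|^2$ (the sign of $\lambda$ being irrelevant here since this is an $L^2$ estimate), plus the contribution of $V$ which is harmless, then Gronwall. Finally, the conservation laws for the limiting solution follow by passing to the limit in the approximate conservation laws, using weak lower semicontinuity for one inequality and reversing time for the other, so that $M$, $J$ and $E$ are exactly conserved; the continuity in time of $u$ with values in $\Sigma$ (not just weak continuity) then also follows from conservation of $E$ in the usual way.

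The main obstacle is the low regularity of the nonlinearity at $u=0$, which is precisely why the one-step fixed point does not work and the regularization/compactness scheme is needed; within that scheme, the delicate point is getting the $\eps$-uniform control of the entropy term $\int |u^\eps|^2\ln|u^\eps|^2$ — bounded below via the logarithmic Sobolev (or Carlen) inequality applied to $u^\eps(t)/\|u^\eps(t)\|_{L^2}$ and the $H^1$ bound, and bounded above by interpolation between $L^2$ and, say, $H^1$ (or $\F(H^1)$) — and then passing to the limit in this term, where one uses that $s\mapsto s\ln s$ is bounded below and a Fatou/uniform-integrability argument secured by the $x$-moment bound coming from the $\Sigma$ norm. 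None of this is new — it is the adaptation to a quadratic $V$ of \cite{CaHa80,Caz83,GuLoNi10,CaGa18} — so the write-up mostly amounts to checking that the commutator estimates with $U(t)$ go through under the at-most-quadratic assumption on $V$, which is exactly the content of \cite[Chapter~9]{Ca11}.
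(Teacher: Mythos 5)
Your overall scheme (regularize the logarithm, solve the $\eps$-problem globally, obtain $\eps$-uniform bounds, pass to the limit by compactness, prove uniqueness via the pointwise estimate on $\IM\big((z_2\ln|z_2|^2-z_1\ln|z_1|^2)(\bar z_2-\bar z_1)\big)$ plus Gronwall) is exactly the paper's, and the existence, uniqueness and conservation parts are all sound. The one place where you genuinely diverge is the derivation of the $\eps$-uniform $\Sigma$-bounds, and there your sketch is too quick. The paper does \emph{not} use conservation of the regularized energy nor any logarithmic Sobolev inequality at this stage: it differentiates \eqref{eq:logNLSpot-app} with respect to $x_j$, observes that the nonlinear terms contribute nothing to the $L^2$ estimate (the coefficients $\ln(\eps+|u^\eps|^2)$ and $2\lambda(\eps+|u^\eps|^2)^{-1}\RE(\bar u^\eps\d_j u^\eps)$ are real, so their contributions have zero imaginary part), and is left with only the commutator term $\d_j V\, u^\eps$, bounded by $1+|x|$; together with the analogous computation for $x_j u^\eps$ this closes a Gronwall loop on $\|\nabla u^\eps\|_{L^2}^2+\|xu^\eps\|_{L^2}^2$ with constants independent of $\eps$. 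Your route via conservation of $E_\eps$ plus logarithmic Sobolev works when $V$ is bounded below (this is essentially the argument of \cite{CaHa80,ACS20}), but the proposition allows arbitrary smooth at-most-quadratic $V$, including $V(x)=-\omega^2|x|^2/2$, for which $\int V|u^\eps|^2$ is unbounded below on bounded sets of $\Sigma$ and energy conservation alone gives no control on $\|xu^\eps\|_{L^2}$ (indeed the paper shows the moment \emph{does} grow exponentially in that case). To make your version rigorous you would still have to run the differential inequality for $\|xu^\eps(t)\|_{L^2}^2$ and couple it with the energy identity, at which point you have essentially reproduced the paper's direct computation with extra steps. Two minor remarks: your claimed upgrade to $C(\R;\Sigma)$ is more than the statement asserts and is not needed; and the saturation $\ln(\eps+|z|^2)$ used in the paper is only locally Lipschitz but $L^2$-subcritical, handled by local Strichartz estimates for $H=-\tfrac12\Delta+V$, whereas your globally Lipschitz regularization is the Cazenave--Haraux variant --- both are fine.
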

The above result was established in \cite{ACS20} in the case where $V$
is an isotropic harmonic potential, and $\lambda<0$. We prove
Proposition~\ref{prop:cauchy} by adapting the strategy from
\cite{CaGa18}, which is different from the one in \cite{ACS20},
inspired by \cite{CaHa80}. 
\begin{remark}
  The case of a time dependent potential $V$ could be considered as
  well, with very few modifications regarding
  Proposition~\ref{prop:cauchy} (see \cite{Ca11}), as well as in the
  description of the propagation of Gaussian functions in
  Section~\ref{sec:gaussian}. 
\end{remark}
In the rest of this introduction, and for the other results, we assume
$\lambda>0$.

\subsubsection{Full harmonic confinement}

We first consider the case of a full, isotropic confinement: $V(x) =
\tfrac{\omega^2}{2}|x|^2$. This confinement completely alters the
dynamics of the case $\lambda>0$, since solitary waves now exist,
while from Theorem~\ref{theo:logNLSdisp},  all $\Sigma$-solutions are
dispersive when $V=0$. 
 To make the connexion with the case $\lambda <0$ considered in
\cite{ACS20} explicit, rewrite $\omega^2$ as $\omega^2 =
\kappa(\kappa+2\lambda)$, $\kappa>0$. 

\begin{theorem}\label{theo:stab-orb}
    Let $d\ge 1$ and $\lambda>0$. Suppose that
  \begin{equation*}
    V(x) = \frac{\kappa(\kappa+2\lambda)}{2}|x|^2,\quad \kappa>0. 
  \end{equation*}
  Then the (generalized) Gausson is given by $\phi_\nu(x)
  =e^{-\frac{\nu+\kappa d/2}{2\lambda}} e^{-\kappa
    |x|^2/2}$, for $\nu\in \R$. It  generates a standing wave $u(t,x) = \phi_\nu(x)
  e^{i\nu t}$ solution to \eqref{eq:logNLSpot}, which is
  orbitally stable in the energy space:
  For any $\eps>0$, there exists $\eta>0$ such that if $u_0\in \Sigma$
satisfies $\|u_0-\phi_\nu\|_\Sigma<\eta$, then the solution $u$ to
\eqref{eq:logNLSpot} exists for all $t\in \R$, and
 \begin{equation*}
    \sup_{t\in \R}\inf_{\theta\in \R}\|u(t)
    -e^{i\theta}\phi_\nu\|_\Sigma<\eps.
  \end{equation*}
\end{theorem}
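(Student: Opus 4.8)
The plan is to follow the classical Cazenave–Lions variational strategy for orbital stability, adapted to the confined logarithmic setting, exactly as in \cite{Ar16,ACS20} but with the sign of $\lambda$ reversed. First I would reformulate the standing wave $\phi_\nu$ as a minimizer of a suitable functional. Since $\lambda>0$, the nonlinear term $\lambda\int |u|^2(\ln|u|^2-1)$ in the energy now carries the ``favorable'' sign, and the confining term $\int V|u|^2$ is coercive, so I expect $\phi_\nu$ to be (up to the mass constraint) the unique minimizer of the energy $E$ restricted to the sphere $\{\|u\|_{L^2}^2 = M(\phi_\nu)\}$; the Lagrange multiplier is precisely $\nu$. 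Concretely I would introduce $S_\nu(u) := E(u) + \nu M(u)$, check that $S_\nu'(\phi_\nu)=0$ (this is just the standing-wave equation $-\tfrac12\Delta\phi_\nu + V\phi_\nu + \lambda\ln(|\phi_\nu|^2)\phi_\nu = -\nu\phi_\nu$, an algebraic identity for the Gaussian), and then show $\phi_\nu$ is a strict local (in fact global, among fixed-mass functions) minimizer of $S_\nu$.

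The second and central step is the coercivity/uniqueness estimate: I must show there is no loss of compactness, i.e. any minimizing sequence converges strongly in $\Sigma$ to $\phi_\nu$ up to a phase, and that $S_\nu(u) - S_\nu(\phi_\nu)$ controls $\|u - e^{i\theta}\phi_\nu\|_\Sigma^2$ for $u$ near the orbit. Because the potential is isotropic and confining, translations are not a symmetry (unlike the $V=0$ case, hence no infimum over $y\in\R^d$ in the statement), which actually simplifies the compactness: the $\int V|u|^2$ term in $E$ provides tightness directly, ruling out the ``vanishing'' and ``dichotomy'' scenarios of concentration–compactness, so bounded minimizing sequences are automatically precompact in $L^2$, and then in $\Sigma$ by the conservation laws. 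For the quantitative lower bound I would exploit convexity: writing $f = |u|$, the functional splits into the Dirichlet energy of $|u|$, the momentum-type remainder $\tfrac12\||u|^2|\nabla(\arg u)|^2$, the harmonic term, and the entropy term $\lambda\int f^2\ln f^2$. The combination $\tfrac12\int|\nabla f|^2 + \tfrac{\omega^2}{2}\int |x|^2 f^2 + \lambda\int f^2\ln f^2 + \nu\int f^2$ is, after completing the square in the Gaussian variable, controlled below by a relative-entropy / logarithmic-Sobolev-type inequality — indeed this is exactly where the ``dual'' logarithmic Sobolev inequality advertised in the abstract enters, giving $S_\nu(u) - S_\nu(\phi_\nu) \gtrsim \int |u - e^{i\theta}\phi_\nu|^2 + (\text{gradient and weight terms})$ for an optimal $\theta$.

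The third step is the standard bootstrap: given $\eps$, pick $\eta$ so small that $u_0$ close to $\phi_\nu$ in $\Sigma$ forces, via continuity of $E$ and $M$ on $\Sigma$ (here one uses that $t\mapsto x\mapsto |u|^2\ln|u|^2$ is continuous in the relevant topology, a point already handled in the proof of Proposition~\ref{prop:cauchy}), that $S_\nu(u_0) - S_\nu(\phi_\nu) < \delta(\eps)$; then by conservation of $E$ and $M$, $S_\nu(u(t)) - S_\nu(\phi_\nu) < \delta(\eps)$ for all $t$, and the coercivity inequality of step two yields $\inf_\theta \|u(t) - e^{i\theta}\phi_\nu\|_\Sigma < \eps$. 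Global existence in $\Sigma$ is already guaranteed by Proposition~\ref{prop:cauchy}, so no continuation argument is needed. The main obstacle I anticipate is step two, specifically proving the sharp coercivity of $S_\nu$ near the orbit with the entropy term present: the functional $u\mapsto \lambda\int|u|^2\ln|u|^2$ is continuous but not smooth (its derivative involves $\ln|u|^2$, which is singular at zeros of $u$), so I expect the bound to come not from a naive second-variation computation but from a genuine functional inequality — the relative entropy estimate $\mathrm{Ent}(f^2\,|\,\phi_\nu^2) \ge c\, d_\Sigma(f, \phi_\nu)^2$ type bound — whose proof (presumably by reduction to the Gaussian case and a Csiszár–Kullback–Pinsker-style inequality) is the technical heart of the argument.
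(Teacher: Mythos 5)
Your outline is correct and rests on the same two pillars as the paper's proof: the Cazenave--Lions scheme, and a relative-entropy (Csisz\'ar--Kullback) characterization of the Gaussian, which is precisely Lemma~\ref{lem:logMoment}. Two structural differences are worth recording. First, you minimize the energy on the mass sphere with $\nu$ as a Lagrange multiplier, whereas the paper minimizes the action $S_\nu=E+\nu M$ on the Nehari manifold $\{I_\nu(u)=0\}$, where $I_\nu=2S_\nu+2\lambda M$; on that manifold $S_\nu=-\lambda M$, so the problem reduces to maximizing the mass, and the mismatch between $\|u(t)\|_{L^2}$ and $\|\phi_\nu\|_{L^2}$ is absorbed by rescaling $u(t)$ by a scalar $\rho$ chosen so that $I_\nu(\rho u(t))=0$ --- a renormalization made painless by the invariance $u\mapsto ku\,e^{it\lambda\ln|k|^2}$ of the logarithmic nonlinearity. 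Your mass-sphere version works as well (the inequality of Lemma~\ref{lem:logMoment} with $a=\kappa$ combined with the harmonic-oscillator bound \eqref{eq:quantum_harm_energy} identifies the Gausson as the unique fixed-mass minimizer up to a constant phase), but you should make the analogous renormalization of $u(t)$ back onto the sphere explicit before invoking compactness. Second, and more importantly, you announce as the ``technical heart'' a quantitative coercivity estimate $S_\nu(u)-S_\nu(\phi_\nu)\gtrsim\inf_\theta\|u-e^{i\theta}\phi_\nu\|_\Sigma^2$; the paper neither proves nor needs such a bound. Csisz\'ar--Kullback only controls $\| |u|^2-\phi_\nu^2\|_{L^1}$, not the $\Sigma$-distance, and upgrading it would be genuinely delicate given the singularity of $\ln|u|^2$ at zeros of $u$. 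The paper's route is softer: the compact embedding $\Sigma\hookrightarrow L^2$ together with the Br\'ezis--Lieb lemma (Lemma~\ref{lem:compact_Sigma}) yields strong $\Sigma$-convergence of every minimizing sequence to a point of the orbit (Lemma~\ref{lem:min_seq}), and stability then follows by the usual contradiction argument, so you can drop the quantitative estimate entirely. Finally, a point your polar decomposition handles only implicitly: equality in the relative-entropy inequality fixes $|u|$ alone, and it is the vanishing of the phase-gradient part of the kinetic energy (equivalently, the equality case of \eqref{eq:quantum_harm_energy}) that forces the phase $\theta$ to be a constant rather than a function of $x$; this deserves a sentence in a complete write-up.
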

We see in particular that in the limit case $V=0$, corresponding to
$\kappa=0$, $\phi_\nu$ is no longer an $L^2$-function. To prove
Theorem~\ref{theo:stab-orb}, we essentially resume the approach from
\cite{ACS20}, based on the Cazenave-Lions method \cite{CaLi82}, as well as on
a variational characterization of the generalized Gausson. In the case
$\lambda<0$, this characterization relies on the logarithmic Sobolev
inequality and the description of equality cases. In the present
framework, the logarithmic Sobolev
inequality is replaced by a rather natural counterpart, involving a momentum instead
of a derivative, see Lemma~\ref{lem:logMoment}. 
\begin{remark}
 The case of anisotropic confinement in all
directions,
\begin{equation*}
  V(x) =\sum_{j=1}^d\frac{\omega_j^2}{2}x_j^2,\quad \omega_j>0,
\end{equation*}
 can be addressed with straightforward adaptations, by considering suitable anisotropic Gaussian functions in
all steps of the proof, including Lemma~\ref{lem:logMoment}.
\end{remark}

\subsubsection{Partial harmonic confinement}
 In the same spirit as e.g. \cite{AnCaSi15,BBJV17}, we now assume
 $d\ge 2$, and that $V$ is confining in some but not all directions:
 suppose that the space variable is now $(x',x'')\in \R^{d_1}\times
 \R^{d_2}$, with $d_1,d_2\ge 1$, $d_1+d_2=d$, and
 \begin{equation*}
   V(x',x'') = \frac{\omega^2}{2}|x'|^2.
 \end{equation*}
We show that dispersion is always present in $x''$, in the sense that
Theorem~\ref{theo:logNLSdisp} remains valid for $u$, provided that
suitable integration in $x'$ is considered.

\begin{theorem}\label{theo:logNLSpartial}
  Let $(x',x'')\in \R^{d_1}\times
 \R^{d_2}$, with $d_1,d_2\ge 1$, $d_1+d_2=d$, and
 \begin{equation*}
   V(x',x'') = \frac{\omega^2}{2}|x'|^2, \quad \omega>0.
 \end{equation*}
 Suppose $\lambda>0$. Let $u_0\in \Sigma\setminus\{0\}$, 
and resume the notation
$\gamma(x''):=e^{-|x''|^2/2}$.
Introduce
\begin{equation*}
  \rho(t,y) :=\tau(t)^{d_2}\int_{\R^{d_1}}\left|u\(t,x',y\tau(t)\)\right|^2dx'\times\frac{\pi^{d_2}}{\|u_0\|_{L^2(\R^d)}^2}.
\end{equation*}
Then we have
\begin{equation*}\label{eq:moments}
   \int_{{\mathbb R}^{d_2}}
  \begin{pmatrix}
    1\\
y\\
|y|^2
  \end{pmatrix}
\rho(t,y)dy\Tend t \infty 
 \int_{{\mathbb R}^{d_2}}
  \begin{pmatrix}
    1\\
y\\
|y|^2
  \end{pmatrix}
  \gamma^2(y)dy ,
\end{equation*}
and
\begin{equation*}\label{eq:weaklimitv}
  \rho(t,\cdot)\mathop{\rightharpoonup}\limits_{t\to \infty}
  \gamma^2 
\quad  \text{weakly in }L^1({\mathbb R}^d)  ,
\end{equation*}
along with
\begin{equation*}
 W_1\(\frac{|\rho(t)|^2}{\pi^{d_2/2}},\frac{\gamma^2}{\pi^{d_2/2}}\)\le
  \frac{C}{\sqrt{\ln t}},\quad t\ge e.
\end{equation*}
\end{theorem}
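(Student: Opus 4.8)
The plan is to adapt to the present setting the scheme of \cite{CaGa18,FeAPDE} used for Theorem~\ref{theo:logNLSdisp}, rescaling \emph{only} the $x''$-variables and checking that the harmonic confinement in the $x'$-directions enters each estimate merely as a harmless nonnegative term. First I would perform a partial lens transform: mimicking \eqref{eq:uvDMJ} in $x''$ only, set, writing $y:=x''/\tau(t)$,
\[
  u(t,x',x'')=\frac{1}{\tau(t)^{d_2/2}}\,w\!\left(t,x',\frac{x''}{\tau(t)}\right)\exp\!\left(i\,\frac{\dot\tau(t)}{\tau(t)}\,\frac{|x''|^2}{2}\right),
\]
with $\tau$ as in \eqref{eq:tau-libre}, so that $\rho(t,\cdot)$ coincides, up to the explicit constant fixed in the statement, with the density on $\R^{d_2}$ obtained by integrating $|w(t,\cdot)|^2$ over $x'$. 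Since $V$ depends on $x'$ alone, the $x'$-variables are simply carried along and the transformation of the $x''$-part is the standard potential-free lens transform: using $\ddot\tau\,\tau=2\lambda$ from \eqref{eq:tau-libre} and absorbing the space-independent term $-\lambda d_2\ln\tau\,w$ into a time-dependent global phase, one finds that $w$ solves
\[
  i\d_t w+\tfrac12\Delta_{x'}w+\frac{1}{2\tau(t)^2}\Delta_y w=\tfrac{\om^2}{2}|x'|^2\,w+\lambda|y|^2\,w+\lambda\ln|w|^2\,w .
\]
The relevant structural facts are that the quadratic term in $y$ comes out with the \emph{universal} coefficient $\lambda$, that the coefficient $1/(2\tau^2)$ of $\Delta_y$ tends to $0$, and that the confinement $\tfrac{\om^2}{2}|x'|^2$ is inherited unchanged.

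Next I would use the monotonicity of the associated energy
\[
  \mathcal E(t):=\tfrac12\|\nabla_{x'}w\|_{L^2}^2+\tfrac{1}{2\tau^2}\|\nabla_y w\|_{L^2}^2+\tfrac{\om^2}{2}\!\int|x'|^2|w|^2+\lambda\!\int|y|^2|w|^2+\lambda\!\int|w|^2\bigl(\ln|w|^2-1\bigr).
\]
Since the only explicit time-dependence sits in the coefficient of $\Delta_y$ and $\dot\tau\ge 0$ for $t>0$, the Hamiltonian structure of the equation gives
\[
  \frac{d}{dt}\mathcal E(t)=-\frac{\dot\tau(t)}{\tau(t)^3}\,\|\nabla_y w(t)\|_{L^2}^2\le 0 ,\qquad t>0 .
\]
Combined with a logarithmic Sobolev inequality bounding $\int|w|^2\ln|w|^2$ from below in terms of $\|w\|_{L^2}^2$ and $\int(|x'|^2+|y|^2)|w|^2$ — the $x'$-confinement now providing the coercivity that, in the potential-free case, came from the gradient — this shows that $\mathcal E$ is bounded from below, hence converges as $t\to\infty$, and that $\sup_{t\ge 0}\int(|x'|^2+|y|^2)|w(t,\cdot)|^2<\infty$ together with $\int_1^\infty\dot\tau\,\tau^{-3}\|\nabla_y w\|_{L^2}^2\,dt<\infty$. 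In particular the confined second moment $\int|x'|^2|u(t,x)|^2\,dx$ stays bounded: the $x'$-directions do not disperse.

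Dispersion in $x''$ comes, as in \cite{CaGa18}, from a virial identity: since $x''\cdot\nabla_{x''}V\equiv 0$, the quantity $\mu_2(t):=\int|x''|^2|u(t,x)|^2\,dx$ obeys the \emph{potential-free} identity $\ddot\mu_2=2\|\nabla_{x''}u\|_{L^2}^2+2\lambda d_2\,M$; likewise the $x''$-component of the angular momentum is conserved (again because $V=V(x')$), while $\int\rho\,dy$ is a multiple of the conserved mass. This already settles the zeroth moment of $\rho$ and gives $\int y\,\rho(t,y)\,dy=\tau(t)^{-1}\int x''|u|^2\,dx\to 0$, since $\tau(t)/t\to\infty$. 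For the second moment one feeds the a priori bounds above and the convergence of $\mathcal E$ into the virial identity and argues (as in \cite{CaGa18,FeAPDE}, now also using $\int_1^\infty\dot\tau^2\tau^{-2}\,dt<\infty$) that $\mu_2(t)\sim\tfrac{d_2}{2}M\,\tau(t)^2$, hence $\int|y|^2\rho(t,y)\,dy\to\int|y|^2\g^2(y)\,dy$. The weak $L^1$ convergence $\rho(t,\cdot)\rightharpoonup\g^2$ and the bound $W_1\le C/\sqrt{\ln t}$ then follow, as in \cite{FeAPDE}, from the decay to $0$ of the relative entropy of the normalized $\rho(t,\cdot)$ with respect to the normalized $\g^2$ — obtained from the corresponding statement for $|w(t,\cdot)|^2$ via subadditivity of entropy — together with a Talagrand/Csisz\'ar--Kullback--Pinsker transport inequality.

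I expect the main obstacle to be this last step. In \cite{CaGa18,FeAPDE} the relative-entropy argument is run on a density solving a closed equation, whereas here the marginal $\int_{\R^{d_1}}|w(t,x',\cdot)|^2\,dx'$ does not. One must therefore carry out the whole monotonicity/compactness analysis on the joint density $|w(t,\cdot)|^2$, using the uniform control of $\int|x'|^2|w|^2$ to keep the $x'$-contribution to the entropy bounded and decoupled, and then check that $\int_{\R^{d_1}}|w(t,x',\cdot)|^2\,dx'$ is forced to converge, up to normalization, to $\g^2$: the Gaussian profile is selected because, in the limit $\tau\to\infty$, the $y$-part of $\mathcal E$ retains no gradient term and is therefore minimized, at fixed mass, by the Gibbs density $e^{-|y|^2}=\g^2$. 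Integrating over $x'$ then removes the bounded, normalized $x'$-factor and leaves precisely $\g^2$, uniformly in $u_0$.
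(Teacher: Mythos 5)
Your setup, the partial rescaling, the dissipation identity for $\mathcal E$, and the treatment of the zeroth and first moments all match the paper's proof; your variants for the a priori bounds (coercivity via the dual logarithmic Sobolev inequality instead of the paper's splitting $\mathcal E=\mathcal E_+-\mathcal E_-$ plus interpolation) and for the second moment (virial identity instead of rewriting the conserved energy of $u$ in the rescaled variables) are workable. The genuine gap is in the last step, which is the heart of the theorem. You propose to derive both the weak $L^1$ convergence and the $W_1$ rate from ``decay to $0$ of the relative entropy of the normalized $\rho(t,\cdot)$ with respect to the normalized $\gamma^2$'' combined with Csisz\'ar--Kullback/Talagrand. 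No such decay is available: by Csisz\'ar--Kullback it would yield \emph{strong} $L^1$ convergence of $\rho(t,\cdot)$, which the theorem does not assert and which, already in the potential-free setting of Theorem~\ref{theo:logNLSdisp}, is only known for Gaussian data. Likewise your selection principle --- the Gaussian is singled out because it minimizes the limiting energy at fixed mass --- is unjustified: $\mathcal E$ is non-increasing and bounded below, hence convergent, but nothing forces its limit to be the infimum of the limiting functional (compare the fully confined case, where solutions near the Gausson are only orbitally stable, not attracted to it).

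The mechanism the paper actually uses is hydrodynamic. Setting $R=|v|^2$, $J_1=\IM(\bar v\nabla_{x'}v)$, $J_2=\IM(\bar v\nabla_y v)$, one writes a continuity equation and a momentum equation for $J_2$; the crucial structural observation is that every $x'$-derivative appearing there is an exact $x'$-divergence, so integrating over $x'\in\R^{d_1}$ annihilates these terms and produces a \emph{closed} system for $\rho=\int R\,dx'$ and $j=\int J_2\,dx'$, namely $\d_t\rho+\tau^{-2}\nabla_y\cdot j=0$ and $\d_t j+\lambda\nabla_y\rho+2\lambda y\rho=\O(\tau^{-2})$, with a right-hand side controlled by the a priori estimates (in particular $\int_0^\infty\dot\tau\,\tau^{-3}\|\nabla_yv\|_{L^2}^2\,dt<\infty$). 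Passing to the limit forces $\nabla_y\rho_\infty+2y\rho_\infty=0$, i.e. $\rho_\infty=\gamma^2$ after normalization, and the $W_1$ rate is extracted from this same system as in \cite{FeAPDE}, not from a transport--entropy inequality. This is precisely the obstacle you flag (the marginal does not satisfy a closed equation), but its resolution is the vanishing of the $x'$-divergences under integration, not an entropy-decay argument.
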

Theorem~\ref{theo:logNLSpartial} can be informally restated by saying
that
\begin{equation*}
  \int_{\R^{d_1}}|u(t,x',x'')|^2dx'
\end{equation*}
is dispersive in the variable $x''$, with rate $\tau(t)^{-d_2}$. Such
a dispersive property on a marginal was used in 
 \cite{AnCaSi15} as a first step (following from Morawetz estimates)
 to prove scattering for NLS with a 
 power-like nonlinearity and a partial confinement. In the present
 framework, no scattering result can be inferred.
\smallbreak 
 Indeed, considering tensorized initial data of the form $u_0(x) =
u_{01}(x')u_{02}(x'')$, we have $u(t,x)=u_1(t,x')u_2(t,x'')$, where
$u_1$ solves \eqref{eq:logNLSpot} with a fully confining potential,
and $u_2$ solves \eqref{eq:logNLSpot} with $V=0$, hence obeys
Theorem~\ref{theo:logNLSdisp} (with $d$ replaced by $d_2$). In
particular, $u_1$ may correspond to solutions described in
Theorem~\ref{theo:stab-orb}, or be a Gaussian breather (see
Section~\ref{sec:gaussian}), showing that integrating with respect to
$x'$ first, in the above result, makes perfect sense, and the dynamics
in the $x'$ variable is independent of the dispersion stated in
Theorem~\ref{theo:logNLSpartial}.

\subsubsection{Repulsive harmonic potential}
We finally present some results in the repulsive harmonic case,
\begin{equation}\label{eq:repulsive}
  i\d_t u +\frac{1}{2}\Delta u = -\omega^2\frac{|x|^2}{2}u +\lambda
  u\ln\(|u|^2\),\quad u_{\mid t=0}=u_0,
\end{equation}
for $x\in \R^d$ and $\omega,\lambda>0$.
In the case $\lambda=0$, any defocusing energy-subcritical power-like
nonlinearity 
 $|u|^{2\si}u$, $0<\si<\tfrac{2}{(d-2)_+}$,   is short range for
 scattering, since the repulsive harmonic 
   potential induces an exponential decay in time (\cite{CaSIMA}). In
   particular, in the dispersive frame (meaning after rescaling the
   wave function in terms of the time dependent dispersion, $e^{\omega
     t}$ in this case), any asymptotic profile can
   be reached. 
   On the other hand for $\omega=0$, Theorem~\ref{theo:logNLSdisp}
   shows that there is only one profile (if one considers the modulus
   only)  which can be reached in the dispersive frame. 
The case $\omega,\lambda>0$ is therefore a borderline case from these
two perspectives: do we have scattering, or a universal behavior? In
the next result, we give a partial answer to this question, showing
the same dispersive rate as in the linear case $\lambda=0$, and ruling
out a universal behavior in the sense of Theorem~\ref{theo:logNLSdisp}.
\begin{proposition}\label{prop:repulsive}
  Let $\omega,\lambda>0$ and let $\tau_-$ be the solution to the ODE
  \begin{equation}\label{eq:tau-moins}
     \ddot\tau_- = \omega^2\tau_-
     +\frac{2\lambda}{\tau_-},\quad
     \tau_-(0)=1,\quad 
     \dot\tau_-(0)=0. 
   \end{equation}
   There exists $\mu_\infty>0$ such that, as $t\to\infty$,
   $\tau_-(t)\sim \mu_\infty e^{\omega t}$, $\dot \tau_-(t)\sim
   \omega\mu_\infty e^{\omega t}$. For $u_0\in 
   \Sigma$ and the solution $u\in L^\infty_{\rm loc}(\R;\Sigma)$ to
   \eqref{eq:repulsive},  consider the rescaling
   \begin{equation*}
  u(t,x)
  =\frac{1}{\tau_-(t)^{d/2}}v\left(t,\frac{x}{\tau_-(t)}\right)
\exp \({i\frac{\dot\tau_-(t)}{\tau_-(t)}\frac{|x|^2}{2}} \) .
\end{equation*}
Then $v$ is bounded and not dispersive, in the sense that
\begin{equation*}
 \sup_{t\ge 0}\int_{{\mathbb R}^d}\left(1+|y|^2+\left|\ln
    |v(t,y)|^2\right|\right)|v(t,y)|^2dy <\infty.
\end{equation*}
We can find two initial data
 \begin{equation*}
    u_{01}(x)= e^{-|x|^2/2 + i\beta_1 |x|^2/2},\quad u_{02}(x)= e^{-|x|^2/2 +
      i\beta_2 |x|^2/2},\quad \beta_j>0,
  \end{equation*}
  such that the corresponding $v_j$'s satisfy
  \begin{equation*}
    |v_j(t,\cdot)|^2\Tend t \infty \gamma_j^2 \text{ in }L^1(\R^d),
    \quad \text{with }\gamma_1\not =\gamma_2. 
  \end{equation*}
\end{proposition}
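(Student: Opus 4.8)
We establish the three assertions in turn.

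\emph{Step 1: asymptotics of $\tau_-$.} I start from the first integral of \eqref{eq:tau-moins}: multiplying by $\dot\tau_-$ and integrating gives $\dot\tau_-^2=\omega^2(\tau_-^2-1)+4\lambda\ln\tau_-$. Hence $\ddot\tau_->0$, so $\tau_-\ge\tau_-(0)=1$ with $\dot\tau_-\ge0$ on $[0,\infty)$; from $\ddot\tau_-\ge\omega^2$ we get $\tau_-(t)\to\infty$, and once $\tau_-\ge\sqrt2$ the first integral gives $\dot\tau_-\ge\omega\tau_-/\sqrt2$, i.e. exponential growth. Then $\frac{\dot\tau_-}{\tau_-}=\omega(1-\tau_-^{-2}+4\lambda\omega^{-2}\tau_-^{-2}\ln\tau_-)^{1/2}$ satisfies $\big|\frac{\dot\tau_-}{\tau_-}-\omega\big|\le C\tau_-^{-2}\ln\tau_-$, which is integrable on $[0,\infty)$ by the exponential growth; therefore $\ln\tau_-(t)-\omega t$ has a finite limit $L$, and the stated asymptotics hold with $\mu_\infty:=e^L>0$ (and $\dot\tau_-=\frac{\dot\tau_-}{\tau_-}\cdot\tau_-$ gives the companion estimate).

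\emph{Step 2: uniform bounds on $v$.} Plugging the rescaling into \eqref{eq:repulsive} (the same lens-type change of variables as in \eqref{eq:uvDMJ}, with $\tau$ replaced by $\tau_-$ and $V=-\omega^2|x|^2/2$; the $2\lambda/\tau_-$ term in \eqref{eq:tau-moins} is precisely what turns the potential into the confining term $\lambda|y|^2$ in the moving frame) and using \eqref{eq:tau-moins} to cancel all terms quadratic in $x$, one finds that, up to a spatially constant time-dependent phase (irrelevant for $|v|$ and $\|\nabla v\|$), $v$ solves
\begin{equation*}
  i\d_t v+\frac{1}{2\tau_-(t)^2}\Delta v=\lambda\big(|y|^2+\ln|v|^2\big)v,\qquad v(0,\cdot)=u_0 ,
\end{equation*}
using $\tau_-(0)=1$, $\dot\tau_-(0)=0$. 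This is Hamiltonian for the time-dependent functional
\begin{equation*}
  \mathcal E[v](t):=\frac{1}{2\tau_-(t)^2}\|\nabla v\|_{L^2}^2+\lambda\int_{\R^d}|y|^2|v|^2\,dy+\lambda\int_{\R^d}|v|^2\big(\ln|v|^2-1\big)dy ,
\end{equation*}
and, by the approximation argument that yields conservation of the energy in Proposition~\ref{prop:cauchy}, $\frac{d}{dt}\mathcal E[v](t)=-\frac{\dot\tau_-(t)}{\tau_-(t)^3}\|\nabla v(t)\|_{L^2}^2\le0$ for $t\ge0$. Thus $\mathcal E[v](t)\le\mathcal E[u_0]<\infty$ on $[0,\infty)$; dropping the kinetic term and using $\|v(t)\|_{L^2}^2=\|u_0\|_{L^2}^2=:m$ (mass conservation), this gives
\begin{equation*}
  \lambda\int_{\R^d}|y|^2|v(t)|^2\,dy+\lambda\int_{\R^d}|v(t)|^2\ln|v(t)|^2\,dy\le\mathcal E[u_0]+\lambda m .
\end{equation*}
Since $\lambda>0$, I invoke Lemma~\ref{lem:logMoment} (the momentum analogue of the logarithmic Sobolev inequality): it yields, at fixed mass $m$, a bound $\int|v|^2\ln|v|^2\ge-\tfrac{md}{2}\ln\!\big(\int|y|^2|v|^2\big)+c(m,d)$ (equivalently, a sharp lower bound for $\int|v|^2\ln|v|^2+\int|y|^2|v|^2$, attained on Gaussians). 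With $A(t):=\int|y|^2|v(t)|^2$ the previous inequality becomes $A(t)-\tfrac{md}{2}\ln A(t)\le C$, and since $A\mapsto A-\tfrac{md}{2}\ln A$ is coercive on $(0,\infty)$ this bounds $\sup_{t\ge0}A(t)$. Feeding this back bounds $\int|v|^2\ln|v|^2$ from above, while its lower bound comes from $\int_{|v|<1}|v|^2|\ln|v|^2|\le C_\theta\int|v|^{2\theta}\le C(m+A)^\theta$ (H\"older, with $\theta\in(\tfrac{d}{d+2},1)$ so that $\langle y\rangle^{-2\theta/(1-\theta)}\in L^1$). Altogether $\sup_{t\ge0}\int(1+|y|^2+|\ln|v|^2|)|v|^2<\infty$. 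I expect this moment estimate to be the main obstacle: the confining $\lambda|y|^2$ and the vanishing dispersion coefficient $1/\tau_-^2$ pull against each other, and what saves the bound is that $\mathcal E[v]$ is a Lyapunov functional on $t\ge0$ rather than conserved, combined with the coercivity provided by Lemma~\ref{lem:logMoment}.

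\emph{Step 3: the two Gaussian data.} For $u_{0j}(x)=e^{-(1-i\beta_j)|x|^2/2}$ with $\beta_j>0$, the solution stays Gaussian, $u_j(t,x)=b_j(t)e^{-a_j(t)|x|^2/2}$, $\RE a_j>0$ (Section~\ref{sec:gaussian}). Writing $a_j=p_j+iq_j$, \eqref{eq:repulsive} collapses to the ODEs $\dot p_j=2p_jq_j$, $\dot q_j=q_j^2-p_j^2-\omega^2-2\lambda p_j$, while mass conservation forces $|b_j|^2=p_j^{d/2}$, so that $|v_j(t,y)|^2=(\tau_-^2p_j)^{d/2}\exp(-\tau_-^2p_j|y|^2)$. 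The substitution $\sigma_j:=p_j^{-1/2}$ reduces the $(p_j,q_j)$ system to
\begin{equation*}
  \ddot\sigma_j=\omega^2\sigma_j+\frac{2\lambda}{\sigma_j}+\frac{1}{\sigma_j^3},\qquad \sigma_j(0)=1,\quad \dot\sigma_j(0)=\beta_j ,
\end{equation*}
with first integral $\dot\sigma_j^2=\omega^2(\sigma_j^2-1)+4\lambda\ln\sigma_j+1-\sigma_j^{-2}+\beta_j^2$; arguing exactly as in Step 1, $\sigma_j(t)\sim\nu(\beta_j)e^{\omega t}$ with
\begin{equation*}
  \ln\nu(\beta)=\int_1^\infty\bigg(\frac1\xi-\frac{\omega}{\sqrt{\omega^2(\xi^2-1)+4\lambda\ln\xi+1-\xi^{-2}+\beta^2}}\bigg)d\xi .
\end{equation*}
Differentiating under the integral, $\partial_\beta[\,\cdot\,]=\omega\beta\big(\omega^2(\xi^2-1)+4\lambda\ln\xi+1-\xi^{-2}+\beta^2\big)^{-3/2}>0$ for $\beta>0$, so $\nu$ is strictly increasing. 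Consequently $\tau_-^2p_j=(\tau_-/\sigma_j)^2\to(\mu_\infty/\nu(\beta_j))^2=:\theta_{j,\infty}>0$, and since $\int|v_j(t)|^2=\pi^{d/2}=\int\theta_{j,\infty}^{d/2}e^{-\theta_{j,\infty}|y|^2}dy$, Scheff\'e's lemma gives $|v_j(t,\cdot)|^2\to\gamma_j^2:=\theta_{j,\infty}^{d/2}e^{-\theta_{j,\infty}|\cdot|^2}$ in $L^1(\R^d)$. Picking any $\beta_1\neq\beta_2$ in $(0,\infty)$, strict monotonicity of $\nu$ gives $\theta_{1,\infty}\neq\theta_{2,\infty}$, hence $\gamma_1\neq\gamma_2$, which settles the last assertion; the non‑universality is thus tied to the strictly monotone dependence of the dispersive prefactor $\nu(\beta)$ on the initial chirp, transparent on the integral formula above.
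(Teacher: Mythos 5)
Your proof is correct, and although it shares the paper's overall architecture (ODE asymptotics for $\tau_-$, a dissipated energy in the moving frame, reduction of the Gaussian case to the ODE), each of the three steps is carried out by a genuinely different argument. For the asymptotics of $\tau_-$, you integrate $\dot\tau_-/\tau_--\omega$, which is $\O\(\tau_-^{-2}\ln\tau_-\)$ and hence integrable once exponential growth is secured; the paper instead studies $\mu(t)=\tau(t)e^{-\omega t}$ via the monotonicity of $\dot\mu e^{2\omega t}$. Your route is shorter and directly yields the integral representation of the limit that you exploit later. For the a priori bounds (the paper's Lemma~\ref{lem:apv}), you close the estimate by combining the dissipation of $\mathcal E$ with Lemma~\ref{lem:logMoment}: the coercivity of $A\mapsto A-\tfrac{md}{2}\ln A$ bounds the momentum, and a H\"older argument controls the negative part of the entropy. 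The paper instead splits $\mathcal E=\mathcal E_+-\mathcal E_-$ and bounds $\mathcal E_-\lesssim \mathcal E_+^{1/2}$ by $L^{2-\eps}$ interpolation; both work, and yours is a pleasant extra use of the ``dual log-Sobolev'' lemma that the paper introduces only for the orbital stability argument. Finally, for the two Gaussian data, the paper relies on the expansion \eqref{eq:gamma-infini-app}, $\mu_\infty=\tau_0+\tau_1/\omega+\O(\ln\tau_1/\tau_1)$, and therefore takes $\beta_2\gg\beta_1\gg 1$, whereas you derive an exact integral formula for $\ln\nu(\beta)$ (in substance, the paper's \eqref{eq:integral_gamma} after the change of variables $\xi=\sigma(s)$) and prove strict monotonicity in $\beta$, so that \emph{any} pair $\beta_1\neq\beta_2$ works --- a sharper conclusion at no extra cost. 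The only points you should make explicit are the forward global existence of $\tau_-$ (no finite-time blow-up, since $\ddot\tau_-\le(\omega^2+2\lambda)\tau_-$ once $\tau_-\ge1$) and the justification of differentiating under the integral sign in $\beta$; both are routine.
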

In the same spirit as the comments following the statement of
Theorem~\ref{theo:logNLSpartial}, we emphasize that the dynamics
associated to the logarithmic nonlinearity \eqref{eq:logNLSpot} is
completely different from the dynamics for NLS with a power-type
nonlinearity. Typically,  in \cite{CaDCDS}, the case of  a
saddle potential, for $x\in \R^d$, $d\ge 2$,
\begin{equation*}
  V(x) =\omega_1^2 x_1^2-\omega_2^2 x_2^2,
\end{equation*}
was considered. There is confinement in the first direction, and a
strong dispersion in  the second one. 
In the
case of a defocusing power-like nonlinearity $|u|^{2\sigma}u$, we have global existence in
$\Sigma$. Given $u_0\in \Sigma$, at least if $\sigma\ge
2/d$ and $\omega_2$
is sufficiently 
large compared to $\omega_1$,  there is scattering in
the sense that there exist $u_\pm\in\Sigma$ such that
\begin{equation*}
  \|U_V(-t)u(t)-u_\pm\|_{\Sigma}\Tend t {\pm \infty}0,\quad U_V(t)=
  e^{-itH},\quad H=-\frac{1}{2}\Delta +V.
\end{equation*}
Proposition~\ref{prop:repulsive} shows that for a logarithmic
nonlinearity, there is still an exponential dispersion in $x_2$, but
the tensorization property implies that no scattering result can
hold. 
\subsection{Outline of the paper}
\label{sec:comments}

In Section~\ref{sec:cauchy}, we show how to prove
Proposition~\ref{prop:cauchy}. In Section~\ref{sec:algebra}, we
describe more precisely some remarkable properties related to the
logarithmic nonlinearity in the presence of a quadratic potential:
invariances, special transforms, and propagation of Gaussian
data. Theorem~\ref{theo:stab-orb} is proven in Section~\ref{sec:full},
relying on Lemma~\ref{lem:logMoment}, which can be thought of as a
dual to the logarithmic Sobolev inequality. In
Section~\ref{sec:partial}, we address the proof of
Theorem~\ref{theo:logNLSpartial}, by showing essentially how to
consider a suitable setting in order to relate
this result to Theorem~\ref{theo:logNLSdisp}. Finally,
Proposition~\ref{prop:repulsive} is proved in
Section~\ref{sec:repulsive}.

\section{Cauchy problem}
\label{sec:cauchy}

In this section, we briefly explain the proof of
Proposition~\ref{prop:cauchy}, which can be addressed like in the case
without potential considered in \cite{CaGa18}. The main issue is that
the logarithmic nonlinearity is not Lipschitz continuous at the
origin. Proceeding as in \cite{CaGa18}, we first regularize the
nonlinearity by saturating the logarithm near zero, and consider the
sequence of approximate solutions given by
\begin{equation}
  \label{eq:logNLSpot-app}
  i\d_t u^\eps +\frac{1}{2} \Delta u ^\eps =V (x)u^\eps+ \lambda \ln\(\eps+|u^\eps|^2\)u^\eps  ,\quad u^\eps_{\mid t=0} =u_0  ,
\end{equation}
where $V$, $\lambda$ and $u_0$ are as in
Proposition~\ref{prop:cauchy}, and $\eps>0$. For fixed $\eps>0$, the
above nonlinearity is locally Lipschitzian, with moderate growth at
infinity, ensuring that it is $L^2$-subcritical. Since the potential $V$ is
smooth and at most quadratic, local-in-time Strichartz estimates are
available for $H=-\frac{1}{2}\Delta+V$, the same as in the case $V=0$
(see e.g. \cite{Ca11} and references therein). Therefore, for every
$\eps>0$, there is a unique, global solution at the $L^2$
level. Again, at fixed $\eps>0$, the nonlinearity is smooth, so higher
regularity is propagated: $u^\eps,\nabla u^\eps, xu^\eps\in
C(\R;L^2(\R^d))$.

\smallbreak
The sequence $(u^\eps)_{0<\eps\le 1}$ converges, thanks to compactness
arguments based on uniform a priori estimates. Since $\lambda\in \R$
and $V$ is real-valued, the $L^2$-norm of $u^\eps$ is independent of
time, $\|u^\eps(t)\|_{L^2}=\|u_0\|_{L^2}$. For $1\le j\le d$,
differentiating \eqref{eq:logNLSpot-app} with respect to $x_j$ yields
\begin{align*}
  i{\partial}_t {\partial}_j u_\eps +\frac{1}{2} \Delta
  {\partial}_ju_\eps &=V(x)\d_j u^\eps + \d_j V(x)
 u^\eps+\lambda \ln\left(\eps+|u_\eps|^2\right){\partial}_ju_\eps
  \\
  &\quad + 2\lambda \frac{1}{\eps+|u_\eps|^2}  \RE (
    {\bar u}_\eps {\partial}_ju_\eps) u_\eps.
\end{align*}
By assumption on $V$, $|\d_j V(x)|\lesssim 1+|x|$, so the standard
 $L^2$ estimate yields
\begin{equation*}
  \frac{d}{dt}\|\nabla u^\eps(t)\|_{L^2}^2\le C \(
  \|u^\eps(t)\|_{L^2}^2+\|x u^\eps(t)\|_{L^2}^2 + \|\nabla
  u^\eps(t)\|_{L^2}^2\),
\end{equation*}
where $C$ is independent of $\eps $. Similarly, 
\begin{align*}
  i{\partial}_t\( x_j u_\eps \)+\frac{1}{2} \Delta
 (x_ju_\eps) &= \d_j u^\eps+V(x)x_j u^\eps  +\lambda
               \ln\left(\eps+|u_\eps|^2\right)x_ju_\eps, 
\end{align*}
hence
\begin{equation*}
  \frac{d}{dt}\|x u^\eps(t)\|_{L^2}^2\le 2 \int_{\R^d} |x u^\eps(t,x)|
  |\nabla u^\eps(t,x)|dx\le\|x u^\eps(t)\|_{L^2}^2 + \|\nabla
  u^\eps(t)\|_{L^2}^2. 
\end{equation*}
In view of the conservation of the mass, Gronwall lemma implies that
there exists $C$ independent of $\eps$ 
such that
\begin{equation*}
  \|x u^\eps(t)\|_{L^2}^2 + \|\nabla
  u^\eps(t)\|_{L^2}^2 \le C\(\|u_0\|_{L^2}^2+\|x u_0\|_{L^2}^2 + \|\nabla
  u_0\|_{L^2}^2 \)  e^{C|t|}, \quad t\in \R. 
\end{equation*}
Therefore, we have compactness in space for the sequence
$(u^\eps)_\eps$. Compactness in time follows from
\eqref{eq:logNLSpot-app}. Arzela-Ascoli theorem yields  a converging
subsequence, hence the existence part of 
Proposition~\ref{prop:cauchy}.
\smallbreak

The conservation of mass and energy can be proven
like in \cite{CaHa80} (see also \cite{CazCourant}).
\smallbreak

Uniqueness follows from the remark that any solution $u\in
L^\infty_{\rm loc}(\R;\Sigma)$ to \eqref{eq:logNLSpot} actually
belongs to $C(\R;L^2(\R^d))$, since the equation implies $\d_t u \in
L^\infty_{\rm loc}(\R;H^{-1}+\F(H^{-1}))$ (see \cite{CaGa18}), and from
the argument discovered in \cite{CaHa80}:
\begin{lemma}[Lemma 9.3.5 from \cite{CazCourant}]\label{lem:unique}
  We have 
  \begin{equation*}
    \left| \IM\left(\left(z_2\ln|z_2|^2
      -z_1\ln|z_1|^2\right)\left(\bar z_2-\bar z_1\right)\right)\right|\le
    4|z_2-z_1|^2 \, ,\quad \forall 
    z_1,z_2\in {\mathbb C} \, .
  \end{equation*}
\end{lemma}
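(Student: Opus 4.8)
The plan is to reduce this inequality for complex numbers to an elementary estimate on the real logarithm. Write $f(z):=z\ln|z|^2$, extended by $f(0):=0$ (which is consistent, since $z\ln|z|^2\to0$ as $z\to0$); the quantity to be bounded is $\IM\big((f(z_2)-f(z_1))(\bar z_2-\bar z_1)\big)$. First I would expand the product into four terms and observe that the two ``diagonal'' contributions $|z_j|^2\ln|z_j|^2$ are real and hence invisible to $\IM$. Using $\IM(z_1\bar z_2)=-\IM(z_2\bar z_1)$, the remaining cross terms collapse, leaving
\[
\IM\big((f(z_2)-f(z_1))(\bar z_2-\bar z_1)\big)=\big(\ln|z_1|^2-\ln|z_2|^2\big)\,\IM(z_2\bar z_1).
\]

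Next I would estimate the factor $\IM(z_2\bar z_1)$. From $z_2\bar z_1=|z_1|^2+(z_2-z_1)\bar z_1$ and, symmetrically, $z_2\bar z_1=|z_2|^2-z_2\overline{(z_2-z_1)}$, taking imaginary parts gives $|\IM(z_2\bar z_1)|\le|z_2-z_1|\min(|z_1|,|z_2|)$. If $z_1=0$ or $z_2=0$ the left-hand side of the lemma vanishes, so I may assume $z_1,z_2\neq0$; moreover the quantity to be bounded is unchanged under the exchange $z_1\leftrightarrow z_2$, so I may normalize $0<r:=|z_1|\le s:=|z_2|$. Combining the two observations,
\[
\big|\IM\big((f(z_2)-f(z_1))(\bar z_2-\bar z_1)\big)\big|\le 2r\ln(s/r)\,|z_2-z_1|.
\]

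Finally, since $|z_2-z_1|\ge|z_2|-|z_1|=s-r$, it suffices to verify $2r\ln(s/r)\le4(s-r)$, i.e. $\ln t\le2(t-1)$ for $t:=s/r\ge1$; this is immediate from the elementary bound $\ln t\le t-1$, valid for all $t>0$. (In fact this route even produces the constant $2$ in place of $4$, so there is ample room.) I do not expect a genuine obstacle here: the only points requiring care are bookkeeping ones — the degenerate cases where some $z_j$ vanishes, in which $\ln|z_j|^2$ is undefined but the imaginary part is zero anyway, and the verification that the left-hand side is symmetric in $(z_1,z_2)$, which legitimizes the normalization $|z_1|\le|z_2|$.
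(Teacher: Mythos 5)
Your proof is correct. The paper does not prove this lemma at all --- it is quoted verbatim from Cazenave's Courant lecture notes \cite{CazCourant} and used as a black box --- and your argument is essentially the standard one from that reference: the diagonal terms are real, the cross terms collapse to $\bigl(\ln|z_1|^2-\ln|z_2|^2\bigr)\IM(z_2\bar z_1)$, and each factor is controlled by $|z_2-z_1|$ times a modulus, with $\ln t\le t-1$ closing the estimate (your bookkeeping of the degenerate cases $z_j=0$ and of the symmetry justifying $|z_1|\le|z_2|$ is also right, and indeed the constant $2$ suffices).
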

For two solutions $u_1,u_2\in L^\infty_{\rm loc}(\R;\Sigma)$
to \eqref{eq:logNLSpot}, the difference $w=u_2-u_1$ solves
\[
 i{\partial}_t w +\frac{1}{2} \Delta w =V(x)w+ \lambda \(
 \ln\left(|u_2|^2\right)u_2- \ln\left(|u_1|^2\right)u_1\),\quad
 w_{\mid t=0}=0, 
\]
and the standard $L^2$ estimate yields, along with the above lemma,
 \begin{align*}
\frac12 \frac d {dt} \|w(t) \|_{L^2({\mathbb R}^d)}^2& = \lambda  \IM
\int_{{\mathbb R}^d} \( \ln\left(|u_2|^2\right)u_2-
\ln\left(|u_1|^2\right)u_1\) (\bar u_2-\bar u_1) (t) \, dx 
\\
& \le 4 \lambda \|w(t) \|_{L^2({\mathbb R}^d)}^2 ,
 \end{align*}
 hence $w\equiv 0$ from Gronwall lemma.

\section{Remarkable algebraic properties of \eqref{eq:logNLSpot}}
\label{sec:algebra}

\subsection{Invariants}

Apart from the conservations of mass and energy,
\eqref{eq:logNLSpot} is invariant with respect to translation in
time, but not in space in general, due to the potential $V$. When $V$
is zero or exactly quadratic, there is a Galilean invariance (whose
expression depends on the signature of $\nabla^2V$). Having the
tensorization property in mind, recall the formulas for $d=1$:
\begin{itemize}
\item Case $V=0$: if $u(t,x)$ solve \eqref{eq:logNLSpot}, then  for
  any $v\in \R$, so does
  $u(t,x-vt)e^{ivx-iv^2 t/2}$.
\item Case $V(x)=\omega^2x^2/2$:  if $u(t,x)$ solve
  \eqref{eq:logNLSpot}, then  for any $v\in \R$, so does 
  \[u\(t,x-v\frac{\sin(\omega t)}{\omega}\)e^{ivx\cos(\omega t)-iv^2
    \cos(\omega t)\frac{\sin(\omega t)}{2\omega}}.\]
\item Case $V(x)=-\omega^2x^2/2$:  if $u(t,x)$ solve
  \eqref{eq:logNLSpot}, then  for any $v\in \R$, so does 
   \[u\(t,x-v\frac{\sinh(\omega t)}{\omega}\)e^{ivx\cosh(\omega t)-iv^2
    \cosh(\omega t)\frac{\sinh(\omega t)}{2\omega}}.\]
\end{itemize}
The logarithmic nonlinearity causes a rather unique invariance, as
noticed in \cite{CaGa18} for $V=0$, a property which remains in the
presence of $V$: If $u$ solves \eqref{eq:logNLSpot}, then for all
$k\in \C$, so does
\begin{equation*}
  k u(t,x) e^{-it\lambda\ln|k|^2}.  
\end{equation*}
This shows that the size of the initial data alters the dynamics only
through a purely time dependent oscillation, a feature which is
fairly unusual for a nonlinear equation. In addition, considering
$k>0$, differentiating the above formula with respect to $k$ and
letting $k\to 0$ shows that for $t>0$ arbitrarily small, the flow map
$u_0\mapsto u(t)$ cannot be $C^1$, whichever function spaces are
considered for $u_0$ and $u(t)$, respectively; it is at most
Lipschitzian. 
\subsection{Special potentials}

It is a standard fact that if $V$ is linear in $x$, $V(x)= E\cdot x$
for some fixed  $E\in \R^d$, the influence of $V$ is explicit, in the
sense that if $u$ solves \eqref{eq:logNLSpot} with this $V$, and $v$
solves \eqref{eq:logNLSpot} with $V=0$ (and the same initial datum),
then $u$ and $v$ are related through
\begin{equation*}
  v(t,x)= u\left(t, x -\frac{t^2}{2}E \right)e^{i\left( 
tE\cdot x -\frac{t^3}{3}|E|^2\right)}.
\end{equation*}
See e.g. \cite{Ca11}, where this formula is extended to the case where
$E$ depends on time.

On the other hand, lens transforms (see e.g. \cite{Ca11}) seem to be
useless in the case of a 
logarithmic nonlinearity. If $u$ solves \eqref{eq:logNLSpot} with
$V=0$, for $\omega>0$ and $|t|<\pi/(2\omega)$, set
\begin{equation}\label{eq:uu+}
w^+(t,x)= \frac{1}{\left(\cos (\omega t)
\right)^{d/2}}e^{-i\frac{\omega}{2}|x|^2 \tan (\omega
t)} u\left(\frac{\tan (\omega t)}{\omega}, \frac{x}{\cos (\omega
t)} \right)\, .
\end{equation}
Then $w^+$ solves
\begin{equation*}
  i\d_t w^+ +\frac{1}{2} \Delta w^+ =\frac{\lambda}{\cos(\omega t)^2} \ln\(\cos(\omega t)^d|w^+|^2\)w^+ +\omega^2\frac{|x|^2}{2}w^+,\quad w^+_{\mid t=0} =u_0 \, .
\end{equation*}
The time dependent factor in front of the nonlinearity shows a strong
difference with \eqref{eq:logNLSpot}, and the equation in $w^+$ is not
necessarily more pleasant to study. The case of a repulsive harmonic
potential $V(x)=-\omega^2|x|^2/2$ is similar (replace
$\omega$ by $i\omega$). 
\subsection{Propagation of Gaussian initial data}
\label{sec:gaussian}
A remarkable feature of  \eqref{eq:logNLSpot} is that when the
potential is a polynomial of degree at most two in space, then an
initial Gaussian data evolves as a Gaussian for all time. Plugging a
time-dependent Gaussian function into the logarithmic nonlinearity, we
readily see that this remarkable property is related to the well-known
fact that the same holds in the case of the \emph{linear} Schr\"odinger
equation with potential; see e.g. \cite{Hag80,Hag81,Hel75,Hepp}. 

In view of the tensorization property described in the introduction,
we consider the case $d=1$, and a quadratic potential,
\begin{equation}
  \label{eq:logNLSquad}
  i\d_t u +\frac{1}{2} \d_x^2 u =\lambda \ln\(|u|^2\)u +\Omega\frac{x^2}{2}u ,\quad u_{\mid t=0} =u_0  ,
\end{equation}
where $\Omega\in \R$ is a constant.  We seek $u(t,x) =
b(t)e^{-a(t)x^2/2}$ (in particular $u_0$ is Gaussian). Plugging this
into \eqref{eq:logNLSquad}, we 
find:
\begin{equation*}
  i\dot b = \frac{1}{2}ab+\lambda b\ln|b|^2\quad ;\quad i\dot a
  =a^2+2\lambda\RE a-\Omega. 
\end{equation*}
Seeking $a$ under the form
\begin{equation*}
  a=\frac{1}{\tau^2} -i\frac{\dot \tau}{\tau}
\end{equation*}
leads to
\begin{equation}\label{eq:tau-general}
  \ddot \tau = \frac{2\lambda}{\tau}+\frac{1}{\tau^3}-\Omega \tau. 
\end{equation}
As long as the solution is smooth, multiply by $\dot \tau$ and
integrate in time: 
\begin{equation}\label{eq:taupoint-general}
  (\dot \tau)^2 = C_0 +4\lambda \ln\tau -\frac{1}{\tau^2} -\Omega
  \tau^2. 
\end{equation}
In the case of a (constant) harmonic potential, we write
$\Omega=\omega^2$: $\tau$ is bounded,
regardless of the sign of $\lambda$. Typically, we have generalized
Gaussons for each sign of $\lambda$: $u(t,x)
  = C_\nu e^{-k x^2/2+i\nu t}$ solves \eqref{eq:logNLSquad} if and only
  if
  \begin{equation*}
    \omega^2 -2\lambda k=k^2\quad;\quad -\nu -
    \frac{k}{2}=\lambda \ln(C_\nu^2). 
  \end{equation*}
For $u$ to be an $L^2$ function in space, only one choice is possible
for $k$,
\begin{equation*}
  k = -\lambda +\sqrt{\lambda^2+\omega^2}. 
\end{equation*}
Like in \cite{FeDCDS}, we see that $\tau$ is always periodic, and
Gaussian data propagate as breathers in general, as a solitary
wave in the specific case discussed above. Writing
$\omega^2=\kappa(\kappa+2\lambda)$, $\kappa>\max (0,-2\lambda)$, we
recover the generalized Gausson  of the introduction,
$\phi_\nu(x)=e^{-\frac{\nu+\kappa  /2}{2\lambda}} e^{-\kappa
  |x|^2/2} $, with $k=\kappa$ (and $d=1$ here).
\smallbreak

In the case of a (constant) repulsive harmonic potential, we write
$\Omega=-\omega^2$, and \eqref{eq:tau-general} becomes
\begin{equation}
  \label{eq:tau0}
  \ddot\tau = \omega^2\tau +\frac{2\lambda}{\tau}+\frac{1}{\tau^3}.
\end{equation}
Assuming $\tau(0)>0$, we see that $\tau$ remains positive and bounded
away from zero for all
time, since \eqref{eq:taupoint-general} becomes
\begin{equation}\label{eq:derivee}
  \dot\tau(t)^2= C_0 +\omega^2\tau(t)^2 +4\lambda \ln \tau(t)-\frac{1}{\tau(t)^2}.
\end{equation}
Therefore, in the case $\lambda>0$, there exists $\delta>0$ such that
\begin{equation*}
  \tau(t)\ge \delta,\quad \forall t\in \R. 
\end{equation*}
We infer trivially
\begin{equation*}
   \ddot\tau \ge \omega^2 \delta,
 \end{equation*}
hence
 \begin{equation}\label{eq:DV}
  \tau(t)\Tend t \infty \infty,\quad \dot\tau(t)\Tend t \infty \infty. 
\end{equation}
Therefore, we may approximate $\tau$ by the solution to
\begin{equation}
  \label{eq:tau-exp}
  \ddot\tau = \omega^2\tau .
\end{equation}
In particular, the presence of the repulsive harmonic potential
causes dispersion with an exponential dispersive rate. We analyze more
precisely this situation in Section~\ref{sec:repulsive}.

\section{Full harmonic confinement}
\label{sec:full}

In this section, we prove Theorem~\ref{theo:stab-orb}, by following
the strategy of Cazenave and Lions \cite{CaLi82}, and proving some
rigidity property of the generalized Gausson. We recall that
\begin{equation*}
     V(x) = \frac{\kappa(\kappa+2\lambda)}{2}|x|^2,\quad \kappa>0,
\end{equation*}
and we denote $\omega^2 = \kappa(\kappa+2\lambda)$ for conciseness.
\subsection{Technical preliminary}

 We
introduce the action and the Nehari functional:
\begin{align*}
    S_\nu (u) &\coloneqq E(u) + \nu \|u\|_{L^2}^2, \\
    I_\nu (u) &\coloneqq \norm{\nabla u}_{L^2}^2 + \omega^2 \norm{x
                u}_{L^2}^2 + 2 \lambda \int_{\R^d} \abs{u}^2
                \ln{\abs{u}^2} d x + 2 \nu \norm{u}_{L^2}^2=
                2S_\nu(u)+2\lambda\|u\|_{L^2}^2 .
\end{align*}
We also define the quantity
\begin{align*}
    D (\nu) &= \inf \{ S_{\nu} (u) \, | \, u \in \Sigma (\R^d)
              \setminus \{ 0 \}, I_{\nu} (u) = 0 \} \\ 
        &= - \lambda \sup \{ \norm{u}_{L^2}^2 \, | \, u \in \Sigma
          (\R^d) \setminus \{ 0 \}, I_{\nu} (u) = 0 \}, 
\end{align*}
and the set of ground states by
\begin{equation*}
    \mathcal{G}_\nu \coloneqq \{ \phi \in \Sigma (\R^d) \setminus \{ 0 \} \, | \, I_{\nu} (u) = 0, S_\nu (\phi) = D(\nu) \}.
\end{equation*}
At this stage, we emphasize a major difference
  between the case of a power nonlinearity and the logarithmic
  nonlinearity. For power-type nonlinearities, it is standard to
 either  minimize the action, or minimize the energy with a fixed
 mass, the two approaches being equivalent in the case of homogeneous
 nonlinearities. In the case of a logarithmic nonlinearity, the Nehari 
 functional is constant along the flow of the equation, and minimizing
the action without constraint, or the energy with a fixed mass, does not
seem adequate. This characterization of the ground state in the case
of a logarithmic nonlinearity is already present in \cite{Ar16,ACS20}.
A key step of the analysis consists in showing that $\mathcal{G}_\nu
=\{ e^{i\theta} \phi_\nu,\ \theta\in \R\}$, with $\phi_\nu$ defined in
Theorem~\ref{theo:stab-orb},
\begin{equation*}
  \phi_\nu(x) = e^{-\frac{\nu+\kappa d/2}{2\lambda}} e^{-\kappa
    |x|^2/2}.
\end{equation*}

First, we note that
the energy functional $E$ defined in \eqref{eq:conserv} is of class
$C^1$, and for $u \in \Sigma$, its Fr\'echet
derivative is given by 
    \begin{equation*}
        E'(u) = - \Delta u +  \omega^2 \abs{x}^2 u + \lambda u \ln{\abs{u}^2} .
    \end{equation*}
As a consequence,
    $S_\nu$ and $I_\nu$ are of class $C^1$, and for $u \in \Sigma$
    \begin{equation*}
        \langle S_\nu' (u), u \rangle =  I_\nu (u).
    \end{equation*}
We will also need the following compactness result:
\begin{lemma} \label{lem:compact_Sigma}
    For any sequence $(u_m)$ uniformly bounded in $\Sigma $, there exists a subsequence (still denoted $(u_m)$) and some $u \in \Sigma$ such that:
    \begin{itemize}
        \item $u_m \Tend m \infty u$ in $L^2 (\R^d)$,
        \item $u_m \Tend m \infty u$ a.e. in $\R^d$,
        \item The following convergence also holds:
        \begin{equation*}
            \lim_{m \to \infty} \int_{\R^d} \abs{u_m}^2 \ln{\abs{u_m}^2} dx = \int_{\R^d} \abs{u}^2 \ln{\abs{u}^2} dx.
        \end{equation*}
    \end{itemize}
\end{lemma}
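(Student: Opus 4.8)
The plan is to exploit the fact that $\Sigma$ embeds compactly into $L^2(\R^d)$, because the harmonic-oscillator weight $\langle x\rangle$ in the definition of $\Sigma$ prevents mass from escaping to infinity. Concretely, a sequence $(u_m)$ bounded in $\Sigma$ is bounded in $H^1(\R^d)$ and in $L^2(\R^d,|x|^2\,dx)$; by Rellich's theorem it converges (up to a subsequence) strongly in $L^2_{\rm loc}$, and the uniform bound on $\||x|u_m\|_{L^2}$ upgrades this to strong convergence in $L^2(\R^d)$ by a standard tightness argument (split $\int_{|x|>R}|u_m-u|^2 \le \frac{2}{R^2}\sup_m\||x|u_m\|_{L^2}^2 + 2\||x|u\|_{L^2}^2/R^2$, small uniformly in $m$, then use $L^2_{\rm loc}$ convergence on $|x|\le R$). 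This gives the first bullet, and passing to a further subsequence gives the second (a.e. convergence), with the limit $u$ lying in $\Sigma$ by weak lower semicontinuity of the $H^1$ and weighted-$L^2$ norms.

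The third bullet — convergence of $\int |u_m|^2\ln|u_m|^2$ — is the delicate point, because $s\mapsto s\ln s$ is neither bounded nor monotone and the negative part near $s=0$ must be controlled. I would split $t\ln t = (t\ln t)_+ - (t\ln t)_-$ with the two parts nonnegative. For the negative (small-value) part, use that $0\le -(t\ln t)_- \le C t^{1-\delta}$ for $t\le 1$ and any small $\delta>0$, so by the a.e.\ convergence and the $L^2$ (hence $L^{2-2\delta}$ on a ball, with tightness from the $|x|$-weight as above) bound, one gets $\int \big(|u_m|^2\ln|u_m|^2\big)_- \to \int \big(|u|^2\ln|u|^2\big)_-$ by a Vitali/dominated-type argument; this part is even continuous on all of $H^1\cap L^2(|x|^2dx)$. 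For the positive (large-value) part $(t\ln t)_+ \le C_\eps t^{1+\eps}$, which on $\R^d$ is controlled by the $H^1$-norm via Gagliardo–Nirenberg for $\eps$ small ($2<2+2\eps\le \frac{2d}{d-2}$ when $d\ge 3$, all exponents when $d=1,2$); combined with a.e.\ convergence, one again gets convergence of this piece. The cleanest route is probably to invoke the Brézis–Lieb lemma for the functional $f\mapsto\int |f|^2\ln|f|^2$ together with $u_m\to u$ strongly in $L^2$ and in $L^{p}$ for $p$ slightly above $2$ (the latter following from $H^1$-boundedness plus $L^2$-convergence by interpolation), reducing everything to showing $\int \big||u_m-u|^2\ln|u_m-u|^2\big|\to 0$, which holds because $u_m-u\to 0$ in $L^2\cap L^p$.

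The main obstacle is the positive part of the nonlinearity at large values when $d\ge 3$: one must not lose the borderline in the Gagliardo–Nirenberg exponent, so the bound $(t\ln t)_+\le C_\eps t^{1+\eps}$ should be used with $\eps>0$ chosen so that $2+2\eps<\tfrac{2d}{d-2}$, and then uniform integrability of $\{|u_m|^{2+2\eps}\}$ follows from the $\Sigma$-bound (spatial tightness from $|x|u_m$ bounded, plus $H^1$-boundedness controlling the local $L^{2+2\eps}$ mass). Once uniform integrability of both $(|u_m|^2\ln|u_m|^2)_\pm$ is established, Vitali's convergence theorem together with the a.e.\ convergence closes the argument. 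I would present this as: (i) compact embedding $\Sigma\hookrightarrow L^2$ via Rellich plus the $|x|$-weight tightness; (ii) extract the a.e.-convergent subsequence; (iii) prove uniform integrability of $(|u_m|^2\ln|u_m|^2)_\pm$ from the $\Sigma$-bound; (iv) conclude by Vitali. This mirrors the compactness arguments used for the logarithmic energy in \cite{CaGa18,Caz83}.
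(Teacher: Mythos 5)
Your proposal is correct and essentially matches the paper's argument: the first two bullets via the compact embedding $\Sigma\hookrightarrow L^2$ (Rellich plus tightness from the $|x|$-weight), and the third bullet via the pointwise bound $\abs{y^2\ln y^2}\le C_\delta(y^{2-\delta}+y^{2+\delta})$ controlled by the $\Sigma$-norm, concluding with the Br\'ezis--Lieb lemma applied to $f\mapsto\int\abs{f}^2\ln\abs{f}^2$ after showing $\int\abs*{\,\abs{u_m-u}^2\ln\abs{u_m-u}^2}\to 0$. The route you flag as ``cleanest'' is exactly the one the paper takes (note that for the low exponent $2-\delta$ the $\|x(u_m-u)\|_{L^2}$ bound, not just $L^2\cap L^p$ convergence, is what makes that integral vanish, as you correctly anticipate elsewhere in your plan); your alternative Vitali/uniform-integrability version is also valid.
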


\begin{proof}[Sketch of the proof]
    The first two points are standard. For the third point, using the
    fact that for all $\delta>0$, there exists $C_\delta> 0$ such that for all $y \ge 0$ 
    \begin{equation*}
        \abs{y^2 \ln{y^2}} \le C_\delta (y^{2 - \delta} + y^{2 + \delta}),
    \end{equation*}
   we get,
    \begin{equation*}
        \left|\int_{\R^d} \abs{u_m - u}^2 \ln{\abs{u_m - u}^2} dx\right| \le
        C \( 1 + \norm{u_m}_{\Sigma}^2 + \norm{u}_{\Sigma}^2 \)
        \norm{u_m - u}_{L^2} \Tend m \infty 0.
    \end{equation*}
  The third point then follows from
 Br\'ezis-Lieb lemma \cite{BrezisLieb}, in the convex case (see also
 \cite[Lemma~2.3]{Ar16}). 
\end{proof}

\subsection{Variational analysis}

The main novelty to prove orbital stability in the case $\lambda>0$ is 
 a result which may be viewed as
a dual of the celebrated logarithmic Sobolev inequality:
\begin{lemma}\label{lem:logMoment}
  Let $f\in \F(H^1(\R^d))$ and $a>0$:
  \begin{equation}\label{eq:logSobDual}
    -
    \int_{\R^d}|f(x)|^2 \ln\(\frac{|f(x)|^2}{\|f\|_{L^2}^2}\)dx
    \le a\int_{\R^d} |x|^2|f(x)|^2dx+\frac{d}{2}\|f\|_{L^2}^2
    \ln\frac{\pi}{a}. 
\end{equation}
There is equality if and only if $|f(x)|= ce^{-a|x|^2/2}$, with $c=
\|f\|_{L^2}(a/\pi)^{d/2}$. \\ 
Assuming $\|f\|_{L^2}=1$, we also have
 \begin{equation*}
   -\int_{\R^d}|f(x)|^2 \ln\(|f(x)|^2\)dx\le \frac{d}{2}\ln\(\frac{2
    e\pi }{d}\int_{\R^d} |x|^2|f(x)|^2dx\). 
\end{equation*}
\end{lemma}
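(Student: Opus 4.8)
The plan is to prove the inequality \eqref{eq:logSobDual} first, and then derive the last (``scale-invariant'') inequality by optimizing over the free parameter $a$. For \eqref{eq:logSobDual}, I would reduce to the case $\|f\|_{L^2}=1$ by homogeneity: replacing $f$ by $f/\|f\|_{L^2}$, both sides transform consistently (the left-hand side is exactly the relative entropy of $|f|^2$ with respect to the unnormalized measure, and it is invariant under this rescaling). So set $g=|f|^2$, a probability density on $\R^d$, and let $\g_a(x)=(a/\pi)^{d/2}e^{-a|x|^2}$ be the Gaussian probability density with the matching normalization. Then
\begin{equation*}
  \int g\ln g \,dx = \int g\ln\frac{g}{\g_a}\,dx + \int g\ln\g_a\,dx
  = \mathrm{KL}(g\,\|\,\g_a) + \frac{d}{2}\ln\frac{a}{\pi} - a\int|x|^2 g\,dx,
\end{equation*}
and since the Kullback--Leibler divergence $\mathrm{KL}(g\,\|\,\g_a)=\int g\ln(g/\g_a)\ge 0$ with equality iff $g=\g_a$ a.e., rearranging gives exactly \eqref{eq:logSobDual}, together with the stated equality case $|f(x)|^2 = (a/\pi)^{d/2}e^{-a|x|^2}$, i.e. $|f(x)|=c\,e^{-a|x|^2/2}$ with $c=(a/\pi)^{d/2}$ when $\|f\|_{L^2}=1$, and $c=\|f\|_{L^2}(a/\pi)^{d/2}$ in general. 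The only point requiring a word of care is integrability: the negative part of $g\ln g$ is controlled by the bound $|y^2\ln y^2|\le C_\delta(y^{2-\delta}+y^{2+\delta})$ quoted in the proof of Lemma~\ref{lem:compact_Sigma}, so for $f\in\F(H^1)$ all the integrals above are well-defined (possibly $+\infty$ on the right, in which case the inequality is trivial); this is the one genuinely technical step, though a mild one.

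For the final inequality, assume $\|f\|_{L^2}=1$ and write $A:=\int_{\R^d}|x|^2|f(x)|^2\,dx$; if $A=\infty$ there is nothing to prove, and if $A=0$ then $f=0$, contradicting $\|f\|_{L^2}=1$, so $A\in(0,\infty)$. Inequality \eqref{eq:logSobDual} with $\|f\|_{L^2}=1$ reads
\begin{equation*}
  -\int_{\R^d}|f|^2\ln|f|^2\,dx \le aA + \frac{d}{2}\ln\frac{\pi}{a}
  =: \Phi(a),
\end{equation*}
valid for every $a>0$. I minimize $\Phi$ over $a>0$: $\Phi'(a)=A-\frac{d}{2a}=0$ gives $a_\star=\frac{d}{2A}$, and $\Phi(a_\star)=\frac d2+\frac d2\ln\frac{2\pi A}{d}=\frac d2\ln\!\big(\frac{2e\pi A}{d}\big)$ using $\frac d2=\frac d2\ln e$. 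This is precisely the claimed bound $-\int|f|^2\ln|f|^2\,dx\le \frac d2\ln\!\big(\frac{2e\pi}{d}\int|x|^2|f|^2\,dx\big)$.

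I do not anticipate a serious obstacle here: the heart of the matter is recognizing the left-hand side as a relative entropy and invoking the nonnegativity of $\mathrm{KL}$ (equivalently, Jensen's inequality applied to $-\ln$ against the Gaussian weight), which is exactly the ``dual to the logarithmic Sobolev inequality'' flavor advertised before the lemma. The mildly delicate part is the justification of integrability and of the equality-case discussion (one must check $g=\g_a$ a.e. forces $|f|=c\,e^{-a|x|^2/2}$ and conversely that this $f$ indeed lies in $\F(H^1)$ and saturates the bound), but these are routine once the entropy identity is in place.
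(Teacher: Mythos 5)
Your proposal is correct and follows essentially the same route as the paper: both recognize the left-hand side as the relative entropy of $|f|^2/\|f\|_{L^2}^2$ with respect to the normalized Gaussian $\nu_a=(a/\pi)^{d/2}e^{-a|x|^2}$, use its nonnegativity (the paper quantifies this via the Csisz\'ar--Kullback inequality, which also settles the equality case) and then optimize over $a$ to get the second inequality. The computation $a_\star=d/(2A)$ and the resulting bound match the paper exactly.
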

\begin{remark}
 The above inequalities are not translation invariant, just like the usual
 logarithmic Sobolev inequality is not translation invariant on the
 Fourier side.  However, we must take much more than translation on
 the Fourier side into account, as the inequality is obviously
 invariant by multiplication by $e^{i\phi(x)}$ for $\phi$ real-valued,
 not necessarily linear in $x$. 
\end{remark}
The second inequality is the counterpart of the classical optimal
logarithmic Sobolev 
inequality, for $\|f\|_{L^2}=1$,
 \begin{equation*}
  \int_{\R^d}|f(x)|^2 \ln\(|f(x)|^2\)dx\le \frac{d}{2}\ln\(\frac{d
  }{2e\pi}\|\nabla f\|_{L^2}^2\) , 
\end{equation*}
established in \cite{Weissler78}.
\begin{proof}
  For $a>0$, consider the normalized Gaussian
  \begin{equation*}
    \nu_a(x) = \(\frac{a}{\pi}\)^{d/2}e^{-a|x|^2}.
  \end{equation*}
  It is normalized to ensure that $\nu_a$ is a probability
  density. In view of the Csisz\'ar-Kullback inequality  (see
e.g. \cite[Th.~8.2.7]{LogSob}), for $\mu,\nu$ probability densities,
\begin{equation*}
  \|\mu-\nu\|_{L^1({\mathbb R}^d)}^2\le 2 \int_{\R^d}
  \mu(x)\ln \left(\frac{\mu(x)}{\nu(x)}\right) dx. 
\end{equation*}
 Let $\mu(x)=|f(x)|^2/\|f\|_{L^2}^2$ and $\nu=\nu_a$: since the above
 relative entropy  (right-hand side) is non-negative, we have
 \begin{equation*}
   \int_{\R^d}|f(x)|^2\ln\(\frac{|f(x)|^2}{\|f\|_{L^2}^2\nu_a(x)}\)dx\ge 0.
 \end{equation*}
 Replacing $\nu_a$ by its explicit value and expanding  yields \eqref{eq:logSobDual}.
 \smallbreak

 Moreover, equality holds in \eqref{eq:logSobDual} if and only if the
 relative entropy is zero,
and Csisz\'ar-Kullback inequality implies that $\mu(x) = \nu_a(x)$. 
\smallbreak
 
 Suppose $\|f\|_{L^2}=1$. The map $a\mapsto \alpha a -\beta \ln a
 +c$, $\alpha, 
 \beta>0$ reaches its 
 minimum for $a= \beta/\alpha$, so we optimize \eqref{eq:logSobDual}
 with $\alpha = \|xf\|_{L^2}^2$, $\beta=\frac{d}{2}$ and
 $c= \frac{d}{2}\ln \pi$,
 \begin{equation*}
   -\int_{\R^d}|f(x)|^2 \ln\(|f(x)|^2\)dx\le \frac{d}{2}\ln\(\frac{2
     e}{d}\|xf\|_{L^2}^2\) +\frac{d}{2}\ln \pi , 
 \end{equation*}
 hence the announced inequality.
\end{proof}

\begin{lemma}[Ground energy and ground states] \label{lem:ground}
    Let $\nu \in \R$. Then the quantity $D(\nu)$ is given by
    \begin{equation*}
        D (\nu) = - \lambda \norm{\phi_\nu}_{L^2}^2 = - \lambda \pi^\frac{d}{2} \kappa^{- \frac{d}{2}} e^{-\frac{\nu+\kappa d/2}{\lambda}}.
    \end{equation*}
    Moreover,
    \begin{equation*}
        \mathcal{G}_\nu = \left\{ e^{i \theta} \phi_\nu, \theta \in [0, 2 \pi[ \right\}.
    \end{equation*}
\end{lemma}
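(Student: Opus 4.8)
The plan is to compute $D(\nu)$ and identify $\mathcal{G}_\nu$ by reducing everything to the dual logarithmic Sobolev inequality of Lemma~\ref{lem:logMoment}. First I would rewrite the Nehari constraint $I_\nu(u)=0$ in a form where Lemma~\ref{lem:logMoment} applies directly. Recall
\begin{equation*}
  I_\nu(u) = \|\nabla u\|_{L^2}^2 + \omega^2\|xu\|_{L^2}^2 + 2\lambda\int_{\R^d}|u|^2\ln|u|^2\,dx + 2\nu\|u\|_{L^2}^2 .
\end{equation*}
On the constraint set, $S_\nu(u) = \tfrac12 I_\nu(u) - \lambda\|u\|_{L^2}^2 = -\lambda\|u\|_{L^2}^2$, so (since $\lambda>0$) minimizing $S_\nu$ under $I_\nu=0$ is the same as \emph{maximizing} $\|u\|_{L^2}^2$ under $I_\nu=0$, which is exactly the second expression for $D(\nu)$ given in the excerpt. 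So the real task is: among nonzero $u\in\Sigma$ with $I_\nu(u)=0$, how large can $\|u\|_{L^2}$ be, and when is the max attained?

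The key step is the following chain of inequalities on the constraint. Drop the nonnegative term $\|\nabla u\|_{L^2}^2\ge 0$ (this is where the gradient disappears and why no sharpness is lost precisely at Gaussians with real exponent). Then $I_\nu(u)=0$ forces
\begin{equation*}
  2\lambda\int_{\R^d}|u|^2\ln|u|^2\,dx + 2\nu\|u\|_{L^2}^2 \le -\omega^2\|xu\|_{L^2}^2 ,
\end{equation*}
i.e., writing $m=\|u\|_{L^2}^2$ and normalizing,
\begin{equation*}
  -\int_{\R^d}|u|^2\ln\!\Big(\frac{|u|^2}{m}\Big)dx \ge \frac{\omega^2}{2\lambda}\|xu\|_{L^2}^2 + m\ln m + \frac{\nu}{\lambda}m .
\end{equation*}
Now apply Lemma~\ref{lem:logMoment} with the choice $a=\kappa$ (the value that makes $\phi_\nu\propto e^{-\kappa|x|^2/2}$ extremal): the left side is $\le a\|xu\|_{L^2}^2 + \tfrac d2 m\ln(\pi/a)$. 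Combining, the $\|xu\|_{L^2}^2$ terms must be chosen to match — this is exactly the algebraic identity $\tfrac{\omega^2}{2\lambda}=\kappa$, equivalently $\omega^2=\kappa(\kappa+2\lambda)$, which is how the potential was parametrized. After cancelling those terms one is left with a closed inequality purely in $m$, of the form $m\ln m + \tfrac{\nu}{\lambda}m \le \tfrac d2 m\ln(\pi/\kappa)$, giving an explicit upper bound $m\le \pi^{d/2}\kappa^{-d/2}e^{-(\nu+\kappa d/2)/\lambda} = \|\phi_\nu\|_{L^2}^2$. Multiplying by $-\lambda$ yields the claimed formula for $D(\nu)$, and one checks directly that $\phi_\nu$ saturates $I_\nu=0$ (using the ODE relations $\omega^2-2\lambda\kappa=\kappa^2$ and $-\nu-\kappa d/2 = \lambda\ln\|\phi_\nu\|_{L^2}^{-2}\cdot(\dots)$ recorded in Section~\ref{sec:gaussian}) and achieves this mass, so the infimum is attained and equals $D(\nu)$.

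For the characterization of $\mathcal{G}_\nu$, I would trace back the equality cases. If $\phi\in\mathcal{G}_\nu$, then all inequalities above are equalities: in particular $\|\nabla\phi\|_{L^2}=0$ is \emph{not} what we get — rather, we need to be careful. Equality in the dropped term $\|\nabla\phi\|_{L^2}^2\ge 0$ would force $\phi$ constant, impossible in $\Sigma\setminus\{0\}$; the resolution is that Lemma~\ref{lem:logMoment} only sees $|\phi|$, so one splits $\phi = |\phi|e^{i\theta(x)}$ and notes $\|\nabla\phi\|_{L^2}^2 = \|\nabla|\phi|\|_{L^2}^2 + \||\phi|\nabla\theta\|_{L^2}^2 \ge \|\nabla|\phi|\|_{L^2}^2$. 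One then needs a \emph{gradient-inclusive} version: the sharp dual log-Sobolev combined with the classical log-Sobolev (both in Lemma~\ref{lem:logMoment}) should pin down $|\phi(x)| = ce^{-\kappa|x|^2/2}$ with the right constant $c$ — this is the equality statement of Lemma~\ref{lem:logMoment}. Then $\|\nabla\phi\|_{L^2}^2=\|\nabla|\phi|\|_{L^2}^2$ forces $|\phi|\nabla\theta\equiv 0$, hence $\theta$ constant since $|\phi|>0$ everywhere, giving $\phi = e^{i\theta}\phi_\nu$. The main obstacle I anticipate is precisely this last point: ensuring the equality analysis correctly handles the phase and the gradient term, and verifying that the two inequalities in Lemma~\ref{lem:logMoment} together (not the dual one alone) are what force the Gaussian profile with no room for other minimizers; the arithmetic tying $\omega^2=\kappa(\kappa+2\lambda)$ to $a=\kappa$ must be done carefully so that the $\|x\phi\|_{L^2}^2$ terms cancel exactly rather than leaving a slack term that would weaken the bound.
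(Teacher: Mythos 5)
Your reduction of $D(\nu)$ to maximizing $\|u\|_{L^2}^2$ on the Nehari set, and the idea of feeding the constraint into Lemma~\ref{lem:logMoment}, are the right starting points, but the central step of your argument contains a genuine gap. You drop $\|\nabla u\|_{L^2}^2$ entirely and then claim the $\|xu\|_{L^2}^2$ terms cancel for $a=\kappa$ via ``$\tfrac{\omega^2}{2\lambda}=\kappa$''; this is an algebra error, since $\tfrac{\omega^2}{2\lambda}=\kappa$ would mean $\omega^2=2\lambda\kappa$, whereas the parametrization is $\omega^2=\kappa^2+2\lambda\kappa$. If you repair it by choosing $a=\omega^2/(2\lambda)$ so that the moment terms really do cancel, you obtain $\|u\|_{L^2}^2\le (2\pi\lambda/\omega^2)^{d/2}e^{-\nu/\lambda}$, which is \emph{strictly larger} than $\|\phi_\nu\|_{L^2}^2=(\pi/\kappa)^{d/2}e^{-\nu/\lambda-\kappa d/(2\lambda)}$ (the comparison reduces to $2\lambda/(\kappa+2\lambda)>e^{-\kappa/\lambda}$ for $\kappa>0$). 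So discarding the gradient term cannot give the sharp constant, and the claimed value of $D(\nu)$ does not follow from your chain of inequalities.

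The missing ingredient, which is what the paper does, is to split $\omega^2\|xu\|_{L^2}^2=2\lambda\kappa\|xu\|_{L^2}^2+\kappa^2\|xu\|_{L^2}^2$, feed only the piece $2\lambda\kappa\|xu\|_{L^2}^2$ into the dual log-Sobolev inequality with $a=\kappa$, and pair the leftover $\kappa^2\|xu\|_{L^2}^2$ with the \emph{retained} gradient term through the harmonic-oscillator ground-state bound $\|\nabla u\|_{L^2}^2+\kappa^2\|xu\|_{L^2}^2\ge \kappa d\,\|u\|_{L^2}^2$. This supplies exactly the extra factor $e^{-\kappa d/(2\lambda)}$ you are missing, and its equality case ($u=\mu e^{-\kappa|x|^2/2}$ with $\mu\in\C$ a \emph{constant}) is precisely what resolves the phase issue you flag at the end: equality in Lemma~\ref{lem:logMoment} alone only constrains $|u|$ and would allow an arbitrary real phase $\theta(x)$, while equality in the oscillator inequality forces $\theta$ constant. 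Note also that the second inequality of Lemma~\ref{lem:logMoment} is another moment-based (dual) inequality, not the classical gradient log-Sobolev inequality, so the ``combine the two inequalities of the lemma'' route you sketch for the equality analysis is not available as stated; the oscillator inequality is the correct substitute, and with it your proposed decomposition $\|\nabla\phi\|_{L^2}^2=\|\nabla|\phi|\|_{L^2}^2+\||\phi|\nabla\theta\|_{L^2}^2$ becomes unnecessary.
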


\begin{proof}
    Let $u \in \Sigma \setminus \{ 0 \}$ such that $I_{\nu} (u) = 0$.
    First, we apply Lemma~\ref{lem:logMoment} with $a=
    \kappa$:
  \begin{equation} \label{eq:appl_logSobDual}
        2 \lambda \kappa \norm{x u}_{L^2}^2 + 2 \lambda \int_{\R^d} |u(x)|^2 \ln{\abs{u (x)}^2} dx \ge 2 \lambda \norm{u}_{L^2}^2 \( \ln{\norm{u}_{L^2}^2} - \frac{d}{2} \ln\frac{\pi}{\kappa} \).
    \end{equation}
    Using the fact that $\omega^2 = \kappa^2 + 2 \lambda \kappa$, we thus get
    \begin{equation} \label{eq:1st_est_Sigma}
        0 = I_\nu (u) \ge \norm{\nabla u}_{L^2}^2 + \kappa^2 \norm{x u}_{L^2}^2 + 2 \lambda \norm{u}_{L^2}^2 \Bigl( \ln{\norm{u}_{L^2}^2} - \frac{d}{2} \ln\frac{\pi}{\kappa} \Bigr) + 2 \nu \norm{u}_{L^2}^2.
    \end{equation}
    Moreover, since the ground state of the harmonic oscillator
    $-\Delta+\kappa^2|x|^2$ is $e^{-\kappa|x|^2/2}$, the bottom of the
    spectrum is $\kappa d$, thus
    \begin{equation} \label{eq:quantum_harm_energy}
        \norm{\nabla u}_{L^2}^2 + \kappa^2 \norm{x u}_{L^2}^2 \ge \kappa d \norm{u}_{L^2}^2,
    \end{equation}
    with equality if and only if there exists $\mu \in \mathbb{C}$ such that $u (x) = \mu \, e^{- \kappa \abs{x}^2/2}$.
    Thus, there holds
    \begin{align*}
        0 \ge 2 \lambda \norm{u}_{L^2}^2 \( \ln{\norm{u}_{L^2}^2} - \frac{d}{2} \ln\frac{\pi}{\kappa} + \frac{\kappa d}{2 \lambda} + \frac{\nu}{\lambda} \).
    \end{align*}
    Therefore, since $u \neq 0$, we get:
    \begin{equation*}
        \ln{\norm{u}_{L^2}^2} - \frac{d}{2} \ln\frac{\pi}{\kappa} + \frac{\kappa d}{2 \lambda} + \frac{\nu}{\lambda} \le 0,
    \end{equation*}
    which yields
    \begin{equation} \label{eq:L2_ineq}
        \norm{u}_{L^2}^2 \le \pi^\frac{d}{2} \kappa^{- \frac{d}{2}} e^{-\frac{\nu}{\lambda} - \frac{\kappa d}{2 \lambda}}.
    \end{equation}
    Since this inequality holds for all $u \in \Sigma\setminus \{ 0
    \}$ such that $I_{\nu} (u) = 0$, we get
    \[D(\nu) \ge - \lambda
    \pi^\frac{d}{2} \kappa^{- \frac{d}{2}} e^{ -\frac{\nu}{\lambda} - \frac{\kappa d}{2
        \lambda}}.\]
  Moreover, there is equality in this last
    inequality if and only if there is equality in all the previous
    inequalities \eqref{eq:appl_logSobDual},
    \eqref{eq:quantum_harm_energy} and \eqref{eq:L2_ineq}. 
    From the previous discussion, equality in both
    \eqref{eq:appl_logSobDual} and \eqref{eq:quantum_harm_energy} is
    equivalent to $u (x) = \mu \, e^{- \kappa\abs{x}^2/2}$ for some
    $\mu \in \mathbb{C}$. 
    Putting this condition in the last case of equality shows that
    $\abs{\mu}^2 = e^{-\frac{\nu+\kappa d/2}{\lambda}}$. 
    Therefore, all the inequalities are equalities if and only if there exists $\theta \in [0, 2 \pi[$ such that $u = e^{i \theta}\phi_\nu$. The conclusion then readily follows.
\end{proof}

\begin{remark}
  If we had to consider equality in \eqref{eq:appl_logSobDual} only, then $\theta$
  would be a \emph{map} from $\R^d$ to $\R$, not necessarily
  constant. On the other hand, equality in
  \eqref{eq:quantum_harm_energy} forces $\theta$ to be constant. 
\end{remark}
\subsection{Minimizing sequences}

As a final preparation for the proof of Theorem~\ref{theo:stab-orb},
we characterize minimizing sequences.

\begin{lemma} \label{lem:bounded_set}
    $\{ u \in \Sigma \setminus \{ 0 \}, I_{\nu} (u) = 0 \}$ is a
    bounded subset of $\Sigma $. 
\end{lemma}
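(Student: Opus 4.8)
The goal is to show that the ``Nehari manifold'' $\{u\in\Sigma\setminus\{0\},\ I_\nu(u)=0\}$ is bounded in $\Sigma$, i.e. that $\|\nabla u\|_{L^2}^2+\|xu\|_{L^2}^2+\||u|^2\ln|u|^2\|_{L^1}$ stays uniformly bounded on this set. The natural approach is to revisit the chain of inequalities already assembled in the proof of Lemma~\ref{lem:ground}. Starting from $I_\nu(u)=0$ and applying Lemma~\ref{lem:logMoment} with $a=\kappa$, we obtained \eqref{eq:1st_est_Sigma}:
\begin{equation*}
0\ge \|\nabla u\|_{L^2}^2+\kappa^2\|xu\|_{L^2}^2+2\lambda\|u\|_{L^2}^2\Bigl(\ln\|u\|_{L^2}^2-\tfrac{d}{2}\ln\tfrac{\pi}{\kappa}\Bigr)+2\nu\|u\|_{L^2}^2.
\end{equation*}
Together with the $L^2$-bound \eqref{eq:L2_ineq}, $\|u\|_{L^2}^2\le \pi^{d/2}\kappa^{-d/2}e^{-\nu/\lambda-\kappa d/2\lambda}=:m_0$, this immediately controls the gradient and weighted-$L^2$ norms: since $t\mapsto 2\lambda t(\ln t-\tfrac d2\ln\tfrac\pi\kappa)+2\nu t$ is continuous on $[0,m_0]$ (extended by $0$ at $t=0$), it is bounded below there by some constant $-C_0$, so
\begin{equation*}
\|\nabla u\|_{L^2}^2+\kappa^2\|xu\|_{L^2}^2\le C_0,
\end{equation*}
which bounds $u$ in $H^1\cap\F(H^1)=\Sigma$ uniformly.

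It then remains to control the entropy term $\int|u|^2\ln|u|^2\,dx$, which is part of the $\Sigma$-type quantity appearing in $I_\nu$ but whose absolute value is not yet bounded. Here one splits $\int |u|^2\big|\ln|u|^2\big|$ into the region $\{|u|\ge1\}$ and $\{|u|<1\}$. On $\{|u|\ge1\}$ one uses $|u|^2\ln|u|^2\le C_\delta|u|^{2+\delta}$ with $\delta$ chosen so that $2+\delta\le 2^*$ (or $\delta$ arbitrary if $d\le2$), and then the already-established $H^1$-bound together with Gagliardo--Nirenberg (Sobolev embedding) gives a uniform bound on $\|u\|_{L^{2+\delta}}$, hence on that part of the integral. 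On $\{|u|<1\}$ one uses, for any $\delta\in(0,2)$, the bound $\big| |u|^2\ln|u|^2\big|\le C_\delta|u|^{2-\delta}$ and writes $\int_{\{|u|<1\}}|u|^{2-\delta}\,dx\le \big(\int|u|^2\,dx\big)^{1-\delta/2}\big(\int|x|^{2}|u|^{2}\,dx\big)^{?}\cdots$; more simply, since $\langle x\rangle^{-s}\in L^{2/\delta}$ for $s$ large enough, Hölder gives $\int |u|^{2-\delta}=\int \langle x\rangle^{(2-\delta)\cdot?}|u|^{2-\delta}\langle x\rangle^{-(2-\delta)\cdot?}$, bounding it by a power of $\|u\|_{L^2}^2+\|xu\|_{L^2}^2$ times a finite constant. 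Either way, the combination of the mass bound and the $\Sigma$-bound already proven closes the estimate.

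The main obstacle is the last point: ensuring the negative part of the entropy, $\int_{\{|u|<1\}}|u|^2\big|\ln|u|^2\big|\,dx$, is uniformly controlled purely from the $\Sigma$-bound, without circularity. The clean way is to note that for $\|u\|_{L^2},\|xu\|_{L^2}$ bounded one has, by interpolation, a bound on $\|u\|_{L^p}$ for $p$ slightly below $2$ as well (weighted Hölder against $\langle x\rangle^{-N}\in L^q$), so both tails of $\int|u|^2|\ln|u|^2|$ are dominated by fixed powers of the $\Sigma$-norm, which we have just shown is $\le C_0^{1/2}$ uniformly. Collecting the three contributions yields a uniform bound on $\|\nabla u\|_{L^2}^2+\|xu\|_{L^2}^2+\int|u|^2\big|\ln|u|^2\big|\,dx$, i.e. on the $\Sigma$-norm of $u$, which is the assertion.
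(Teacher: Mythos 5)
Your argument is correct and follows essentially the same route as the paper: the $L^2$-bound \eqref{eq:L2_ineq} combined with \eqref{eq:1st_est_Sigma} and a lower bound for $t\mapsto 2\lambda t\ln t+ct$ on a compact interval (the paper uses $-x\ln x\le e^{-1}$ instead of continuity) yields the uniform bound on $\|\nabla u\|_{L^2}^2+\kappa^2\|xu\|_{L^2}^2$. The extra discussion of $\int |u|^2\,|\ln|u|^2|\,dx$ is superfluous here, since the $\Sigma$-norm is just $\|u\|_{H^1}+\|xu\|_{L^2}$ and the paper stops once that is controlled (the entropy control from $\Sigma$-boundedness is anyway the content of Lemma~\ref{lem:compact_Sigma}).
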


\begin{proof}
    Let $u \in \Sigma \setminus \{ 0 \}$ such that $I_{\nu} (u) = 0$.
    By definition of $D(\nu)$, we know that
    \begin{equation*}
        \norm{u}_{L^2}^2 \le - \lambda^{-1} D(\nu).
    \end{equation*}
    Moreover, using \eqref{eq:1st_est_Sigma}, we get
    \begin{equation*}
        \norm{\nabla u}_{L^2}^2 + \kappa^2 \norm{x u}_{L^2}^2 \le - 2 \lambda \norm{u}_{L^2}^2 \ln{\norm{u}_{L^2}^2} + 2 \lambda \norm{u}_{L^2}^2 \Bigl( \frac{d}{2} \ln\frac{\pi}{\kappa} + 2 \nu \Bigr).
    \end{equation*}
    Using the fact that $-x \ln{x} \le e^{-1}$ for all $x \ge 0$, we get
    \begin{equation*}
        \norm{\nabla u}_{L^2}^2 + \kappa^2 \norm{x u}_{L^2}^2 \le 2 \lambda e^{-1} - 2 D(\nu) \( \frac{d}{2} \ln\frac{\pi}{\kappa} + 2 \nu \),
    \end{equation*}
  hence the conclusion, since the $L^2$-norm is controlled by $-D(\nu)$.
\end{proof}

\begin{lemma} \label{lem:min_seq}
    Let $(u_n)$ be a minimizing sequence for $D(\nu)$. Then there exists $\phi \in \mathcal{G}_\nu$ such that, up to a subsequence,
    \begin{equation*}
       \|u_n-\phi\|_\Sigma\Tend n \infty 0.
    \end{equation*}
\end{lemma}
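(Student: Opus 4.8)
The plan is to run the Cazenave–Lions concentration-compactness argument in the form adapted to the confined setting. First I would record that, by Lemma~\ref{lem:bounded_set}, a minimizing sequence $(u_n)$ for $D(\nu)$ is bounded in $\Sigma$, so Lemma~\ref{lem:compact_Sigma} applies: up to a subsequence, $u_n\to\phi$ in $L^2(\R^d)$ and a.e., and $\int |u_n|^2\ln|u_n|^2\to\int|\phi|^2\ln|\phi|^2$. In particular $\phi\neq 0$, since along the minimizing sequence $D(\nu)=-\lambda\|u_n\|_{L^2}^2+o(1)$ with $D(\nu)<0$, so $\|\phi\|_{L^2}^2=-\lambda^{-1}D(\nu)>0$. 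The key point of the confined problem, as opposed to the translation-invariant one treated in \cite{Ar16}, is that the embedding $\Sigma\hookrightarrow L^2$ is compact, so no splitting/vanishing dichotomy can occur: the weak $L^2$-limit is automatically a strong $L^2$-limit, which is what makes $\phi\neq 0$ and rules out loss of mass at infinity.

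Next I would upgrade the convergence from $L^2$ to $\Sigma$ and identify $\phi$ as a ground state. By weak lower semicontinuity of $u\mapsto \|\nabla u\|_{L^2}^2+\omega^2\|xu\|_{L^2}^2$ on $\Sigma$, together with the convergence of the entropy term and of the $L^2$-norm, one gets
\begin{equation*}
  I_\nu(\phi)\le \liminf_{n\to\infty} I_\nu(u_n)=0,
\end{equation*}
and likewise $S_\nu(\phi)\le \liminf_{n\to\infty} S_\nu(u_n)=D(\nu)$. Now I must check $I_\nu(\phi)\ge 0$, equivalently $I_\nu(\phi)=0$: if $I_\nu(\phi)<0$, a standard scaling argument (testing the functional on $\phi_\mu(x)=\mu^{\alpha}\phi(\mu x)$ for a suitable choice of exponent, or simply on $t\phi$ with $t$ close to $1$, exploiting $\langle S_\nu'(\phi),\phi\rangle=I_\nu(\phi)$) produces an admissible competitor $\tilde\phi$ with $I_\nu(\tilde\phi)=0$ and $S_\nu(\tilde\phi)<D(\nu)$, contradicting the definition of $D(\nu)$. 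Hence $I_\nu(\phi)=0$, and combined with $S_\nu(\phi)\le D(\nu)$ and the definition of $D(\nu)$ we get $S_\nu(\phi)=D(\nu)$, i.e. $\phi\in\mathcal{G}_\nu$. Along the way the inequalities must all be equalities, which forces $\|\nabla u_n\|_{L^2}\to \|\nabla\phi\|_{L^2}$ and $\|xu_n\|_{L^2}\to\|x\phi\|_{L^2}$; with $\nabla u_n\rightharpoonup \nabla\phi$ and $xu_n\rightharpoonup x\phi$ weakly in $L^2$, convergence of norms upgrades weak to strong convergence in $L^2$ for both, hence $u_n\to\phi$ in $\Sigma$.

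The main obstacle is the scaling/rigidity step that promotes the inequality $I_\nu(\phi)\le 0$ to the equality $I_\nu(\phi)=0$ without destroying the constraint: unlike the pure power case, the logarithmic nonlinearity behaves delicately under dilations (a dilation $\phi(x)\mapsto \mu^{d/2}\phi(\mu x)$ shifts the entropy by a term $\propto \ln\mu\,\|\phi\|_{L^2}^2$ and also rescales the potential term $\omega^2\|xu\|_{L^2}^2$ by $\mu^{-2}$), so one has to pick the right one-parameter family and verify that $I_\nu$ changes sign across it while $S_\nu$ stays below $D(\nu)$ at the zero of $I_\nu$; here the explicit knowledge of $D(\nu)$ and of $\mathcal{G}_\nu$ from Lemma~\ref{lem:ground}, together with $D(\nu)=-\lambda\sup\{\|u\|_{L^2}^2:I_\nu(u)=0\}$, makes the bookkeeping manageable. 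Once $\phi\in\mathcal{G}_\nu$ and $u_n\to\phi$ in $\Sigma$ are established, the lemma is proved; note we do not claim $\phi=e^{i\theta}\phi_\nu$ in the statement, though by Lemma~\ref{lem:ground} this is automatic.
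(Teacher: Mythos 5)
Your proposal is correct, but its second half takes a genuinely different route from the paper's. Both proofs share the opening: Lemma~\ref{lem:bounded_set} gives boundedness in $\Sigma$, Lemma~\ref{lem:compact_Sigma} gives a weak $\Sigma$-limit $\phi$ with strong $L^2$ convergence and convergence of the entropy integral, and $\|\phi\|_{L^2}^2=-\lambda^{-1}D(\nu)>0$. From there the paper does not invoke weak lower semicontinuity of $I_\nu$ at all: it re-applies the sharp inequalities \eqref{eq:1st_est_Sigma} (from the dual logarithmic Sobolev inequality, Lemma~\ref{lem:logMoment}) and \eqref{eq:quantum_harm_energy} (ground-state energy of the harmonic oscillator), together with the explicit value of $D(\nu)$ from Lemma~\ref{lem:ground}, to squeeze $\liminf$ and $\limsup$ of $\|\nabla u_n\|_{L^2}^2+\kappa^2\|xu_n\|_{L^2}^2$ between $\kappa d\|\phi\|_{L^2}^2$ and $-\kappa d D(\nu)/\lambda$, which coincide; strong $\Sigma$-convergence follows, and $I_\nu(\phi)=0$ is then read off by passing to the limit. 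You instead run the generic Nehari-manifold argument: weak lower semicontinuity gives $I_\nu(\phi)\le 0$, and a renormalization rules out strict inequality. Two points to tighten in your version. First, the ``main obstacle'' you flag is harmless once you commit to the constant multiple $t\phi$ rather than a dilation: $I_\nu(t\phi)=t^2\bigl(I_\nu(\phi)+2\lambda\ln(t^2)\,\|\phi\|_{L^2}^2\bigr)$, so if $I_\nu(\phi)<0$ you solve exactly for $t>1$ with $I_\nu(t\phi)=0$ and get $S_\nu(t\phi)=-\lambda t^2\|\phi\|_{L^2}^2=t^2D(\nu)<D(\nu)$, a contradiction; this is precisely the normalization $\rho_n$ the paper uses later in the orbital-stability argument. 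Second, the upgrade to strong convergence deserves one more line: since $I_\nu(u_n)=0=I_\nu(\phi)$ and the entropy and $L^2$ terms converge, the sum $\|\nabla u_n\|_{L^2}^2+\omega^2\|xu_n\|_{L^2}^2$ converges to $\|\nabla\phi\|_{L^2}^2+\omega^2\|x\phi\|_{L^2}^2$, and combining this with the two separate liminf inequalities from weak lower semicontinuity pins down each limit individually, whence weak convergence plus norm convergence gives strong $L^2$ convergence of $\nabla u_n$ and $xu_n$. With these points made explicit your proof is complete. What your route buys is independence from the sharp constants and equality cases (it would survive in settings where the optimizers of Lemma~\ref{lem:logMoment} are not identified); what the paper's route buys is economy, since the identification of $\mathcal{G}_\nu$ and the compactness of minimizing sequences fall out of one and the same inequality chain.
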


\begin{proof}
    By definition of $D(\nu)$, we know that
    \begin{equation*}
        \norm{u_n}_{L^2}^2 \underset{n \to \infty}{\longrightarrow} - \lambda^{-1} D(\nu).
    \end{equation*}
    Moreover, Lemmas \ref{lem:bounded_set} and \ref{lem:compact_Sigma} show that there exists $\phi \in \Sigma$ such that, up to a subsequence,
    \begin{gather*}
        u_n \rightharpoonup \phi \qquad \textnormal{weakly in } \Sigma , \\
        u_n \to \phi \qquad \textnormal{strongly in } L^2 (\mathbb{R}^d), \\
        \lim_{n \to \infty} \int_{\mathbb{R}^d} \abs{u_n}^2 \ln{\abs{u_n}^2} dx = \int_{\mathbb{R}^d} \abs{\phi}^2 \ln{\abs{\phi}^2} dx.
    \end{gather*}
    In particular, this yields
    \begin{equation*}
        \norm{\phi}_{L^2} = - \lambda^{-1} D(\nu).
    \end{equation*}
    Moreover, using again \eqref{eq:1st_est_Sigma}, there holds
    \begin{equation*}
        \norm{\nabla u_n}_{L^2}^2 + \kappa^2 \norm{x u_n}_{L^2}^2 \le - 2 \lambda \norm{u_n}_{L^2}^2 \ln{\norm{u_n}_{L^2}^2} + 2 \lambda \norm{u_n}_{L^2}^2 \( \frac{d}{2} \ln\frac{\pi}{\kappa} -\frac{\nu}{\lambda} \),
    \end{equation*}
    and
    \begin{align*}
        - 2 \lambda \norm{u_n}_{L^2}^2 \ln{\norm{u_n}_{L^2}^2} &+ 2
    \lambda \norm{u_n}_{L^2}^2 \(\frac{d}{2}  \ln\frac{\pi}{\kappa}
      - \frac{\nu}{\lambda} \)\\
  \Tend n \infty     & 2 D(\nu) \( \ln{\(- \lambda^{-1} D(\nu)\)} - \frac{d}{2} \ln\frac{\pi}{\kappa} +\frac{\nu}{\lambda} \) = - \frac{\kappa d}{\lambda} D(\nu),
    \end{align*}
    by using Lemma \ref{lem:ground}.
    Thus, using again \eqref{eq:quantum_harm_energy}, we get
    \begin{align*}
        - \frac{\kappa d}{\lambda}  D(\nu) = \kappa d \norm{\phi}_{L^2} &\le \norm{\nabla \phi}_{L^2}^2 + \kappa^2 \norm{x \phi}_{L^2}^2 \\
            &\le \liminf \( \norm{\nabla u_n}_{L^2}^2 + \kappa^2 \norm{x u_n}_{L^2}^2 \) \\
            &\le \limsup \( \norm{\nabla u_n}_{L^2}^2 + \kappa^2 \norm{x u_n}_{L^2}^2 \) \le - \frac{\kappa d}{\lambda}  D(\nu).
    \end{align*}
    Therefore, there is equality in all of the above inequalities. In particular, 
    \begin{equation*}
        \norm{\nabla \phi}_{L^2}^2 + \kappa^2 \norm{x \phi}_{L^2}^2 = \lim \( \norm{\nabla u_n}_{L^2}^2 + \kappa^2 \norm{x u_n}_{L^2}^2 \),
    \end{equation*}
    which proves that 
    \begin{equation*}
        u_n \to \phi \qquad \textnormal{strongly in } \Sigma .
    \end{equation*}
    With all the previous convergences, the convergence of $I_\nu (u_n)$ to $I_\nu (\phi)$ is now obvious, and therefore $I_\nu (\phi) = 0$. Since $D(\nu) = - \lambda \norm{\phi}_{L^2}$, we obtain $\phi \in \mathcal{G}_\nu$.
\end{proof}

\subsection{Orbital stability}

Theorem~\ref{theo:stab-orb} is then classically proved by
contradiction. Assume that there exist $\eps > 0$, $u_{0n}\in\Sigma$
and $t_n\in\R$ such that
    \begin{gather}
        \inf_{\theta\in \R} \norm{u_{0n} - e^{i \theta}\phi_\nu}_{\Sigma}
        \Tend n\infty  0, \label{eq:conv_ground_state} \\
        \inf_{\theta\in\R} \norm{u_n (t_n) - e^{i \theta}\phi_\nu}_{\Sigma} > \eps \qquad \textnormal{for any } n \in \mathbb{N}. \label{eq:noconv_ground_state}
    \end{gather}
    Set $v_n = u_n (t_n)$. By \eqref{eq:conv_ground_state} and the
    conservation laws, we obtain
    \begin{gather*}
        \norm{v_n}_{L^2}^2 = \norm{u_{0n}}_{L^2}^2 \underset{n \to \infty}{\longrightarrow} \norm{\phi_\nu}_{L^2}^2, \\
        E(v_n) = E(u_{0n}) \underset{n \to \infty}{\longrightarrow} E(\phi_\nu).
    \end{gather*}
    In particular, there also holds
    \begin{equation*}
        S_\nu (v_n) \Tend n\infty S_\nu (\phi_\nu) = D(\nu).
    \end{equation*}
    Since $I_\nu (f) = 2 E(f) +2( \lambda+ \nu) \norm{u}_{L^2}^2$,
    \begin{equation*}
        I_\nu (v_n) \Tend n\infty I_\nu (\phi_\nu) = 0.
    \end{equation*}
    Next, define the sequence $f_n = \rho_n v_n$ with
    \begin{equation*}
        \rho_n = \exp{\( - \frac{I_\nu (v_n)}{2 \norm{v_n}_{L^2}^2} \)}.
    \end{equation*}
    It is clear that $I_\nu (f_n) = 0$ and $\lim_n \rho_n = 1$, so that $\norm{v_n - f_n}_{\Sigma} \to 0$ and thus $S_\nu (f_n) \to D(\nu)$. Therefore, $(f_n)$ is a minimizing sequence for $D(\nu)$. 
    Applying Lemma \ref{lem:min_seq}, there exists $\phi \in \mathcal{G}_\nu$ such that, up to a subsequence, $f_n \to \phi$ strongly in $\Sigma $. Therefore
    \begin{equation} \label{eq:conv_v_n}
        v_n \to \phi \qquad \textnormal{strongly in } \Sigma .
    \end{equation}
    Moreover, since $\phi \in \mathcal{G}_\nu$, Lemma \ref{lem:ground} gives the existence of $\theta \in [0, 2 \pi[$ such that $\phi = e^{i \theta} \phi_\nu$. Therefore, \eqref{eq:conv_v_n} contradicts \eqref{eq:noconv_ground_state}.

\section{Partial harmonic confinement}
\label{sec:partial}

\subsection{Preparation of the proof}

In this section, we explain how to prove Theorem~\ref{theo:logNLSpartial},
by adapting elements of proof introduced in \cite{CaGa18,FeAPDE}. Like
in Theorem~\ref{theo:logNLSdisp}, we rescale the initial unknown $u$ to
a new unknown $v$ through the formula
\begin{equation}
  \label{eq:uv-partial}
   u(t,x',x'')
  =\frac{1}{\tau(t)^{d_2/2}}v\left(t,x'\frac{x''}{\tau(t)}\right)
\frac{\|u_0\|_{L^2({\mathbb R}^d)}}{\|\gamma\|_{L^2({\mathbb R}^{d_2})}} 
\exp \Big({i\frac{\dot\tau(t)}{\tau(t)}\frac{|x''|^2}{2}} \Big) , 
\end{equation}
where we emphasize that at this stage, $\gamma$ is a function of the
$d_2$-dimensional variable $x''$. The function $\rho$ involved in
Theorem~\ref{theo:logNLSpartial} is then simply
\begin{equation*}
  \rho(t,y) =\int_{\R^{d_1}}|v(t,x',y)|^2dx',\quad y\in \R^{d_2}.
  \end{equation*}
  We readily check that \eqref{eq:logNLSpot} is then equivalent to
  \begin{equation*}
    i\d_t v +\frac{1}{2} \Delta_{x'} v +\frac{1}{2\tau(t)^2}\Delta_y v
    = \lambda \ln\(|v|^2\)v + \(\frac{\omega^2}{2}|x'|^2
    +\lambda|y|^2\)v+ \theta(t)v ,\quad v_{\mid t=0} =\alpha u_0  ,
  \end{equation*}
 with $\alpha =  \|\gamma\|_{L^2({\mathbb
       R}^{d_2})}/\|u_0\|_{L^2({\mathbb R}^d)}$, and where $\theta $ is real-valued and depends on time only,
  \begin{equation*}
    \theta(t) = -\lambda \( d_2 \ln \tau(t) -2 \ln \frac{\|u_0\|_{L^2({\mathbb R}^d)}}{\|\gamma\|_{L^2({\mathbb R}^{d_2})}} \)=-\lambda \( d_2 \ln \tau(t) -2 \ln \frac{\|u_0\|_{L^2({\mathbb R}^d)}}{\pi^{d_2/2}} \).
  \end{equation*}
Changing $v(t,x',y)$ to $v(t,x',y)e^{i\int_0^t \theta(s)ds}$ does not
affect the function $\rho$ involved in the statement of
Theorem~\ref{theo:logNLSpartial}, and removes the last term from the
above equation, so we may assume that $v$ solves
\begin{equation}\label{eq:v-partial}
   i\d_t v +\frac{1}{2} \Delta_{x'} v +\frac{1}{2\tau(t)^2}\Delta_y v
    = \lambda \ln\(|v|^2\)v + \(\frac{\omega^2}{2}|x'|^2
    +\lambda|y|^2\)v,\quad v_{\mid t=0} =v_0  ,
\end{equation}
where $v_0$ is such that
$\|v_0\|_{L^2(\R^d)}=\|\gamma\|_{L^2(\R^{d_2})}$. The equation
\eqref{eq:v-partial} is still Hamiltonian, but since it is no longer
autonomous, we have a dissipated energy: setting
\begin{align*}
  \mathcal E(t)&= \frac{1}{2}\norm{\nabla_{x'} v}_{L^2(\R^d)}^2 +
\frac{1}{2\tau(t)^2}\norm{\nabla_{y}
      v}_{L^2(\R^d)}^2+  \lambda \int_{\R^d}
  |v(t,x',y)|^2\ln |v(t,x',y)|^2dx'dy\\
  &\quad + \int_{\R^d}
 \(\frac{\omega^2}{2}|x'|^2
    +\lambda|y|^2\) |v(t,x',y)|^2dx'dy ,
\end{align*}
we have
\begin{equation*}
  \frac{d\mathcal E}{dt} = -\frac{\dot \tau}{\tau^3} \norm{\nabla_{y}
      v}_{L^2(\R^d)}^2\le 0.
\end{equation*}
This implies crucial a priori estimates:
\begin{lemma}\label{lem:apriori-v-partial}
  Let $u_0\in \Sigma$, $\lambda,\omega>0$. There exists $C>0$ such that
  \begin{align*}
   & \sup_{t\ge 0} \int_{\R^d} \( 1+ |x'|^2+|y|^2+\left|\ln
    |v(t,x',y)|^2\right|\)|v(t,x',y)|^2dx'dy\le C,\\
     &\sup_{t\ge 0} \(\frac{1}{2}\norm{\nabla_{x'} v}_{L^2(\R^d)}^2+ \frac{1}{2\tau(t)^2}
    \norm{\nabla_{y} v}_{L^2(\R^d)}^2 \)\le C.
  \end{align*}
  In addition,
  \begin{equation*}
   \int_0^\infty  \frac{\dot \tau(t)}{\tau(t)^3}
  \|\nabla_y v(t)\|_{L^2(\R^d)}^2<\infty.
  \end{equation*}
\end{lemma}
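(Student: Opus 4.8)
The plan is to combine the two basic a priori bounds that come with the structure of \eqref{eq:v-partial} --- conservation of mass, $\norm{v(t)}_{L^2(\R^d)}^2 = \norm{\gamma}_{L^2(\R^{d_2})}^2$, and the monotonicity $\mathcal{E}(t) \le \mathcal{E}(0)$ --- with a short bootstrap argument that tames the sign-indefinite logarithmic term. First I would record that $\mathcal{E}(0) < \infty$: the kinetic and the $|x'|^2$-, $|y|^2$-weighted terms are finite because $v_0 \in \Sigma$, and $|v_0|^2 \ln|v_0|^2 \in L^1(\R^d)$ follows from the elementary inequality $|y^2 \ln y^2| \le C_\delta(y^{2-\delta} + y^{2+\delta})$ used in the proof of Lemma~\ref{lem:compact_Sigma}, together with $|x| v_0 \in L^2$. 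The only quantity in the statement not immediately controlled by $\mathcal{E}(0)$ and the mass is $\int_{\R^d} |v|^2 \bigl|\ln|v|^2\bigr|\,dx$, since in $\mathcal{E}$ the term $\lambda \int |v|^2 \ln|v|^2$ has a favorable sign only on $\{|v| \ge 1\}$; on $\{|v| < 1\}$ it is negative and must be absorbed by the confining terms.

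To absorb it, fix $\delta \in (0, 4/(d+2))$ and use $\bigl||v|^2 \ln|v|^2\bigr| \le C_\delta |v|^{2-\delta}$ on $\{|v| < 1\}$. Splitting $\R^d$ into $\{|x| < R\}$ and $\{|x| \ge R\}$, where $|x|^2 = |x'|^2 + |y|^2$, and applying H\"older on each piece --- with the conserved mass on the ball and with $\int_{\R^d} |x|^2 |v|^2\,dx$ on its complement (the admissible H\"older exponents are what force $\delta < 4/(d+2)$) --- yields $R > 0$ and $C_\delta$ such that
\begin{equation*}
  \int_{\{|v(t)| < 1\}} |v(t,x)|^2 \bigl|\ln|v(t,x)|^2\bigr|\,dx \le C_\delta \Bigl( 1 + \Bigl( \int_{\R^d} |x|^2 |v(t,x)|^2\,dx \Bigr)^{1 - \delta/2} \Bigr).
\end{equation*}
Write $W(t) = \int_{\R^d} \bigl( \tfrac{\omega^2}{2} |x'|^2 + \lambda |y|^2 \bigr) |v(t)|^2\,dx$, which, since $\omega, \lambda > 0$, is comparable to $\int_{\R^d} |x|^2 |v(t)|^2\,dx$. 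Dropping the nonnegative terms $\tfrac12 \norm{\nabla_{x'} v}_{L^2}^2$, $\tfrac{1}{2\tau^2} \norm{\nabla_y v}_{L^2}^2$ and $\lambda \int_{\{|v| \ge 1\}} |v|^2 \ln|v|^2$ from $\mathcal{E}(t) \le \mathcal{E}(0)$ leaves $W(t) \le \mathcal{E}(0) + \lambda \int_{\{|v| < 1\}} |v|^2 |\ln|v|^2| \le \mathcal{E}(0) + \lambda C_\delta\bigl(1 + W(t)^{1-\delta/2}\bigr)$. As $1 - \delta/2 < 1$, Young's inequality absorbs $W(t)^{1-\delta/2}$ into the left-hand side, giving a bound on $W(t)$, hence on $\int_{\R^d} |x|^2 |v(t)|^2\,dx$, uniform in $t \ge 0$.

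With $W(t)$ bounded, the displayed inequality bounds $\int_{\{|v| < 1\}} |v|^2 |\ln|v|^2|$ uniformly in $t$; feeding this back into $\mathcal{E}(t) \le \mathcal{E}(0)$ then bounds $\tfrac12 \norm{\nabla_{x'} v}_{L^2}^2 + \tfrac{1}{2\tau^2} \norm{\nabla_y v}_{L^2}^2$ and $\lambda \int_{\{|v| \ge 1\}} |v|^2 \ln|v|^2$ as well. Together with the conserved mass, this gives the first two displayed bounds of the lemma, using $\int |v|^2 |\ln|v|^2| = \int_{\{|v|\ge1\}} |v|^2 \ln|v|^2 + \int_{\{|v|<1\}} |v|^2 |\ln|v|^2|$. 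For the last statement, integrate the dissipation identity $\tfrac{d\mathcal{E}}{dt} = -\tfrac{\dot\tau}{\tau^3} \norm{\nabla_y v}_{L^2}^2$ --- noting $\dot\tau > 0$ for $t > 0$ by \eqref{eq:tau-libre}, so the integrand is nonnegative --- to get $\int_0^T \tfrac{\dot\tau(t)}{\tau(t)^3} \norm{\nabla_y v(t)}_{L^2}^2\,dt = \mathcal{E}(0) - \mathcal{E}(T)$; since $\mathcal{E}(T) \ge \lambda \int |v(T)|^2 \ln|v(T)|^2 \ge -\lambda \int_{\{|v(T)|<1\}} |v(T)|^2 |\ln|v(T)|^2| \ge -C$ uniformly in $T$ by the previous step, the right-hand side stays bounded, and monotone convergence as $T \to \infty$ yields the claim.

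The main obstacle is precisely this bootstrap: the energy does not by itself control the negative part of the logarithmic nonlinearity, and the point is to observe that this negative part is dominated by a \emph{strictly sublinear} power of the weighted quantity $\int |x|^2 |v|^2$ that the energy \emph{does} control, so that it can be reabsorbed. Everything else --- the H\"older splittings, the choice of $\delta$, and the scalar inequalities for $s \mapsto s|\ln s|$ --- is routine.
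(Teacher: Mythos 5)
Your argument is correct and follows essentially the same route as the paper: decompose the dissipated energy into its nonnegative part and the negative logarithmic tail on $\{|v|<1\}$, control that tail by a strictly sublinear power of the conserved mass and the confined momentum (your explicit ball/complement H\"older splitting with $\delta<4/(d+2)$ is just an unpacked version of the interpolation $\|v\|_{L^{2-\eps}}\lesssim\|v\|_{L^2}^{1-\delta(\eps)}\|(|x'|+|y|)v\|_{L^2}^{\delta(\eps)}$ used in the paper), absorb, and then integrate the dissipation identity for the final claim. No gaps.
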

\begin{proof}[Sketch of the proof]
 We resume the main lines of the proof of
 \cite[Lemma~4.1]{CaGa18}. Write $\mathcal E=\mathcal E_+-\mathcal
 E_-$, where 
  \begin{align*}
    \mathcal E_+ &= \frac{1}{2}\norm{\nabla_{x'} v}_{L^2}^2 +
  \frac{1}{2\tau(t)^2}\norm{\nabla_{y}
      v}_{L^2}^2+\int_{\R^d}
 \(\frac{\omega^2}{2}|x'|^2
    +\lambda|y|^2\) |v(t,x',y)|^2dx'dy\\
  &\quad   + \lambda\int_{|v(t,x',y)|^2\ge 1}
                   |v(t,x',y)|^2\ln |v(t,x',y)|^2dx'dy   .
\end{align*}
Note that $\mathcal E_+$ is the sum of non-negative terms only.
Since $\mathcal E$ is non-increasing, for $t\ge 0$,
\begin{align*}
  \mathcal E_+(t) &\le \mathcal E(0)+\mathcal E_-(t)=  \mathcal E(0)+
  \lambda \int_{|v|\le 1}|v(t,x',y)|^2\ln \frac{1}{|v(t,x',y)|^2}dx'dy\\
  &\lesssim 1+ \int_{\R^d} |v(t,x',y)|^{2-\eps}dx'dy,
\end{align*}
for any $\eps>0$ sufficiently small. Then $\|v\|_{L^{2-\eps}}\lesssim
\|v\|_{L^2}^{1-\delta(\eps)}\|(|x'|+|y|)v\|_{L^2}^{\delta(\eps)}$ with
$\delta(\eps)\to 0 $ as $\eps\to 0$. In
view of the conservation of the $L^2$-norm of $v$ (which is equal to
the $L^2$-norm of $u$), we infer, for $0<\eps\ll 1$ ,
$\mathcal E_-\lesssim \mathcal E_+^{1/2}$ (for instance), hence
\begin{equation*}
  \mathcal E_+(t) \lesssim 1,\quad \forall t\ge 0. 
\end{equation*}
Therefore, each term in $\mathcal E_+$ is bounded, and so is $\mathcal
E_-$.

Since $\mathcal E$ is bounded, the integral of $\dot {\mathcal E}$ is
bounded, hence the last part of the lemma. 
\end{proof}

\subsection{Convergence of momenta}

The proof of \eqref{eq:moments} stems from the same arguments as in
\cite{CaGa18}. 

\subsubsection{Center of mass}
Introduce
\begin{equation*}
    I_{1} (t) := \IM \int_{\mathbb{R}^d} \bar v (t,x',y) \, \nabla_y
    v(t,x',y) dx' dy, \qquad I_{2}(t) := \int_{\mathbb{R}^d} y \, |v
    (t,x',y)|^2 dx' dy.
\end{equation*}
We compute
\begin{gather*}
    \dot{I}_{1} = -2 \lambda I_{2}, \qquad \dot{I}_{2}=
    \frac{1}{\tau^2 (t)} I_{1}, 
  \end{gather*}
  and so $\tilde I_2:=\tau I_2$ satisfies $ \ddot{\tilde{I}}_{2}= 0$.
  Since $\tau(t) \sim 2t\sqrt{\lambda \ln t}$ as $t\to \infty$, we
  infer that $ I_2(t) = \O (\ln t)^{-1/2}$.
  
\subsubsection{Second order momentum}
Rewriting in terms of $v$ the conservation of the energy for $u$,
\eqref{eq:conserv}, we find:
\begin{align*}
  E(u_0) &=\frac{1}{2}\norm{\nabla_{x'} v}_{L^2(\R^d)}^2 +
\frac{1}{2\tau(t)^2}\norm{\nabla_{y}
      v}_{L^2(\R^d)}^2+\frac{(\dot \tau)^2}{2}\int_{\R^d} |y|^2
 |v(t,x',y)|^2dx'dy\\
&\quad+ \frac{\dot \tau}{\tau}\IM\int_{\R^d} \bar v(t,x',y)y\cdot \nabla_yv(t,x',y)dx'dy
+ \frac{\omega^2}{2}\int_{\R^d}|x'|^2  |v(t,x',y)|^2dx'dy\\
&\quad+   \lambda \int_{\R^d}|v(t,x',y)|^2\ln |v(t,x',y)|^2dx'dy
 -\lambda d_2 \pi^{d_2}\ln \tau\\
  &\quad+2\lambda\|u_0\|_{L^2(\R^d)}^2\ln
           \(\frac{\|u_0\|_{L^2(\R^d)}}{\pi^{d_2/2}}\) .
\end{align*}
In view of Lemma~\ref{lem:apriori-v-partial}, the first, second, fifth,
sixth, and last terms on the right-hand side are bounded in time. In
view of Lemma~\ref{lem:apriori-v-partial} and Cauchy-Schwarz
inequality, the fourth term is $\O(\dot \tau)=\O(\sqrt{\ln t})$, and
we obtain
\begin{equation*}
  \frac{(\dot \tau)^2}{2}\int_{\R^d} |y|^2
 |v(t,x',y)|^2dx'dy-\lambda d_2 \pi^{d_2}\ln \tau=\O(\sqrt{\ln t}),
\end{equation*}
which yields, since $(\dot\tau)^2= 4\lambda\ln \tau$ (multiply
\eqref{eq:tau-libre} by $\dot \tau$ and integrate), 
\begin{equation*}
  \int_{\R^d} |y|^2|v(t,x',y)|^2dx'dy =  \frac{d_2}{2} \pi^{d_2} + \O\(\frac{1}{\sqrt{\ln
  t}}\) = \int_{\R^{d_2}} |y|^2\gamma(y)^2dy + \O\(\frac{1}{\sqrt{\ln
  t}}\) ,
\end{equation*}
hence \eqref{eq:moments}.
\subsection{Convergence of the profile}
 To prove the end of Theorem~\ref{theo:logNLSpartial}, we use a Madelung
 transform: define $R$, $J_1$  and $J_2$ by
 \begin{align*}
  &R(t,x',y)=|v(t,x',y)|^2, \quad J_1(t,x',y) = \IM \(\bar
    v(t,x',y)\nabla_{x'} v(t,x',y)\), \\
   &J_2(t,x',y) = \IM \(\bar
   v(t,x',y)\nabla_y v(t,x',y) \).
 \end{align*}
 We check that \eqref{eq:v-partial} implies
 \begin{align*}
   &\d_t R + \nabla_{x'}\cdot  J_1 +\frac{1}{\tau(t)^2}\nabla_y\cdot
     J_2=0,\\
   & \d_t J_2+\lambda \nabla_y R+2\lambda yR =
     \frac{1}{4\tau(t)^2}\Delta_y\nabla_y R
     -\frac{1}{\tau(t)^2}\nabla_y\cdot\(\RE\(\nabla_y v\otimes \nabla_y
     \bar v\)\)\\
   &\phantom{ \d_t J_2+\lambda \nabla_y R+2\lambda yR
     =}+\frac{1}{2}\nabla_{x'}\cdot \RE \(
    \bar v \(\(\nabla_{x'}\otimes\nabla_y\)v\)- \nabla_{x'}\bar
     v\otimes\nabla_y v\). 
 \end{align*}
We do not write the evolution law for $J_1$, as it is not needed for
the argument. 
\begin{remark}
    Despite the fact that $v (t)$ might not be $H^2$, the term $\bar v
    \(\nabla_{x'}\otimes\nabla_y\)v$ is still well defined in
    $\mathcal{D}' ((0, \infty) \times \mathbb{R}^d)$  owing to the fact
    that $v$ is in $L^\infty_{\rm loc}( (0, \infty); H^1)$ and to the relation:
    \begin{equation*}
    \bar v
    \(\nabla_{x'}\otimes\nabla_y\)v    = \nabla_y \(\bar v
    \nabla_{x'} v\) - \nabla_y\bar v\otimes\nabla_{x'}v.
    \end{equation*}
\end{remark}
We note that
\begin{equation*}
 \rho(t,y)=\int_{\R^{d_1}}R(t,x',y)dx', 
\end{equation*}
and we set
\begin{equation*}
  j(t,y) = \int_{\R^{d_1}}J_2(t,x',y)dx'.
\end{equation*}
These new unknowns solve, in $\mathcal D'((0,\infty)\times \R^{d_2})$:
\[
  \left\{
  \begin{aligned}
  &\d_t \rho +\frac{1}{\tau(t)^2}\nabla_y\cdot j =0,\\
  & \d_t j + \lambda \nabla_y \rho +2\lambda y \rho =  \frac{1}{4\tau(t)^2}\Delta_y\nabla_y \rho
     -\frac{1}{\tau(t)^2}\nabla_y\cdot \mu,
   \end{aligned}
   \right.
   \]
where
\begin{equation*}
  \mu(t,y) = \int_{\R^{d_1}}\RE\(\nabla_y v\otimes \nabla_y
     \bar v\)(t,x',y)dx'.
\end{equation*}
At this stage, in view of the a priori estimates provided by
Lemma~\ref{lem:apriori-v-partial}, we have the same ingredients are
those needed in  \cite[Section~5.3]{CaGa18} and
\cite[Lemma~4.4]{FeAPDE}, hence the weak convergence and the estimate
of the Wasserstein distance in Theorem~\ref{theo:logNLSpartial}.

\section{Repulsive harmonic potential}
\label{sec:repulsive}

In this section, we consider \eqref{eq:repulsive}, that is
\begin{equation*}
  i\d_t u +\frac{1}{2}\Delta u = -\omega^2\frac{|x|^2}{2}u +\lambda
  u\ln\(|u|^2\),\quad u_{\mid t=0}=u_0,
\end{equation*}
for $x\in \R^d$ and $\omega,\lambda>0$.

\subsection{Analysis of the dispersion}
\label{sec:dispersion}

We first resume the analysis started in Section~\ref{sec:gaussian} in
the Gaussian case,
and consider $\tau$ solving \eqref{eq:tau0}. We discuss the
dependence upon initial data at the end.

\subsubsection{Direct error estimate}

Let $\tau_{\rm eff}^T$ solve
\begin{equation*}
  \ddot\tau_{\rm eff}^T = \omega^2\tau_{\rm eff}^T,\quad \tau_{\rm
    eff}^T(T)=\tau(T),\ \dot\tau_{\rm    eff}^T(T)=\dot\tau(T).
\end{equation*}
Consider the error $e_T = \tau- \tau_{\rm eff}^T $. It solves
\begin{equation*}
  \ddot e_T-\omega^2 e_T = \frac{2\lambda}{\tau}+\frac{1}{\tau^3},
\end{equation*}
and Duhamel's formula reads
\begin{equation*}
  e_T(t) = 2\lambda\int_T^t \frac{\sinh\(\omega(t-s)\)}{\omega}
  \frac{ds}{\tau(s)} + \int_T^t \frac{\sinh\(\omega(t-s)\)}{\omega}
  \frac{ds}{\tau(s)^3}. 
\end{equation*}
In view of \eqref{eq:DV}, for $t\ge T\gg 1$,
\begin{equation*}
  |e_T(t)|\le \eps \int_T^t \frac{\sinh\(\omega(t-s)\)}{\omega}ds \le
  \frac{\eps}{\omega^2} \cosh\(\omega(t-T)\). 
\end{equation*}
On the other hand,
\begin{equation*}
\tau_{\rm eff}^T (t)\Eq t\infty \frac{1}{2}\(  \tau(T)+\frac{\dot
  \tau(T)}{\omega}\) e^{\omega(t-T)}. 
\end{equation*}
We infer from \eqref{eq:DV} that $\tau$ grows exponentially in time,
since $\tau(T)+\frac{\dot
  \tau(T)}{\omega}\gg \eps$. In view of \eqref{eq:derivee}, $\dot
\tau$ also grows exponentially in time:
\begin{equation}\label{eq:tau-exponential}
e^{\omega t}\gtrsim   \tau(t)\gtrsim e^{\omega t},\quad e^{\omega
  t}\gtrsim  \dot\tau(t)\gtrsim e^{\omega 
    t},\quad t>0.
\end{equation}
\subsubsection{Analysis of the main ODE}
Setting $\mu(t) = \tau(t)e^{-\omega t}$, we prove that
$\mu(t)\to \mu_\infty$ as $t\to \infty$. It solves
\begin{equation}\label{eq:gamma}
  \ddot \mu+2\omega \dot \mu =
  \frac{2\lambda}{\mu}e^{-2\omega t} +
  \frac{1}{\mu^3}e^{-4\omega t},
\end{equation}
hence
\begin{equation*}
  \frac{d}{dt}\(\dot \mu e^{2\omega t} \)= \frac{2\lambda}{\mu}+
  \frac{1}{\mu^3}e^{-2\omega t}>0.
\end{equation*}
We know from the previous section that
\begin{equation}\label{eq:gamma-borne}
  1\lesssim \mu(t)\lesssim 1,
\end{equation}
and the previous bound becomes
\begin{equation*}
   \frac{d}{dt}\(\dot \mu e^{2\omega t} \)\gtrsim 1.
\end{equation*}
The map $t\mapsto \dot \mu(t) e^{2\omega t}$ is increasing, and 
\begin{equation*}
  \dot \mu(t) e^{2\omega t}\gtrsim t-c\Tend t \infty \infty.
\end{equation*}
Then \eqref{eq:gamma} implies
\begin{equation*}
  \ddot \mu(t) e^{2\omega t}\lesssim -t +c\Tend t \infty -\infty.
\end{equation*}
Therefore, for $t$ sufficiently large, $\dot \mu$ is positive
decreasing, hence has a non-negative limit as $t\to\infty$, and $\ddot
\mu \in L^1$. But
\eqref{eq:gamma} implies that $\dot \mu$ and $\ddot \mu$ are
simultaneously $L^1$, so $\dot \mu\in L^1$ and the limit of $\dot \mu$ has to be zero. This yields the existence of
$\mu_\infty>0$ such that
\begin{equation*}
  \mu(t) \Tend t \infty \mu_\infty,\quad \text{hence}\quad
  \tau(t)\Eq t \infty \mu_\infty e^{\omega t}.
\end{equation*}
In view of \eqref{eq:derivee}, and since $\dot \tau>0$ (at least for
$t\gg 1$), we infer
\begin{equation*}
 \dot \tau(t)\Eq t \infty \omega\mu_\infty e^{\omega t}.
\end{equation*}

\subsubsection{Dependence of $\mu_\infty$}

First, we get another integral expression for $\tau$. Define $F (t):= \frac{2 \lambda}{\tau (t)} + \frac{1}{\tau (t)^3}$. Then there holds
\begin{equation*}
    \ddot{\tau} - \omega^2 \tau = F,
\end{equation*}
which leads to
\begin{equation*}
    \frac{d}{d t} \Bigl( e^{\omega t} (\dot{\tau} - \omega \tau) \Bigr) = F(t) \, e^{\omega t},
\end{equation*}
and thus
\begin{equation*}
    e^{\omega t} (\dot{\tau} - \omega \tau) = (\dot{\tau} (0) - \omega \tau (0)) + \int_0^t F(s) e^{\omega s} d s.
\end{equation*}
Therefore, we get
\begin{equation*}
    \tau (t) = e^{\omega t} \Bigl[ \tau (0) + \frac{1 - e^{-2 \omega t}}{2 \omega} (\dot{\tau} (0) - \omega \tau (0)) + \int_0^t e^{-2 \omega s} \int_0^s F(r) e^{\omega r} d r d s \Bigr].
\end{equation*}
After some easier computations, we get
\begin{multline} \label{eq:integral_tau}
    \tau (t) = \tau (0) \cosh (\omega t) + \frac{\dot{\tau} (0)}{\omega} \sinh (\omega t) + \frac{e^{\omega t}}{2 \omega} \int_0^t e^{- \omega r} F(r) d r \\ - \frac{e^{- \omega t}}{2 \omega} \int_0^t F(r) e^{\omega r} d r.
\end{multline}
We also already know that $\tau (t) \sim \mu_\infty e^{\omega t}$, so in particular $F (t) \sim \frac{2 \lambda}{\mu_\infty} e^{- \omega t}$, and thus
\begin{equation*}
    \int_0^t F(r) e^{\omega r} d r = \O (t), \qquad
    \int_0^\infty F(r) e^{- \omega r} d r < \infty.
\end{equation*}
Therefore, \eqref{eq:integral_tau} leads to an expression for $\mu_\infty$:
\begin{equation} \label{eq:integral_gamma}
    \mu_\infty = \tau (0) + \frac{\dot{\tau} (0)}{\omega} + \frac{1}{2 \omega} \int_0^\infty e^{- \omega r} \( \frac{2 \lambda}{\tau (r)} + \frac{1}{\tau (r)^3} \) d r.
\end{equation}
Moreover, from the fact that $F \ge 0$ and $e^{\omega t} e^{- \omega r} - e^{- \omega t} e^{\omega r} \ge 0$ for all $0 \le r \le t$, \eqref{eq:integral_tau} also gives for all $t \ge 0$:
\begin{equation*}
    \tau (t) \ge \tau (0) \cosh (\omega t) + \frac{\dot{\tau} (0)}{\omega} \sinh (\omega t).
\end{equation*}
Assume $\dot{\tau} (0) = \tau_1 \ge 0$ and $\tau (0) = \tau_0 > 0$. Then, the last term in the right-hand side of \eqref{eq:integral_gamma} can be estimated. First:
\begin{align*}
    0 \le \int_0^\infty e^{- \omega r} \frac{2 \lambda}{\tau (r)} d r
        &\le \int_0^\infty e^{- \omega r} \frac{2 \lambda}{\tau_0 \cosh (\omega r) + \frac{\tau_1}{\omega} \sinh (\omega r)} d r \\
        &\le \int_0^\infty e^{- 2 \omega r} \frac{4 \lambda}{\tau_0 (1 + e^{- 2 \omega r}) + \frac{\tau_1}{\omega} (1 - e^{- 2 \omega r})} d r \\
        &\le - \frac{2 \lambda}{\omega (\tau_0 - \frac{\tau_1}{\omega})} \biggl[\ln{\Bigl(\tau_0 (1 + e^{- 2 \omega r}) + \frac{\tau_1}{\omega} (1 - e^{- 2 \omega r}) \Bigr)}\biggr]_0^\infty \\
        &\le \frac{2 \lambda}{\omega (\frac{\tau_1}{\omega} - \tau_0)} \ln{\Bigl( 1 + \frac{\frac{\tau_1}{\omega} - \tau_0}{2 \tau_0} \Bigr)}.
\end{align*}
Then, we also have
\begin{align*}
    0 \le \int_0^\infty e^{- \omega r} \frac{1}{(\tau (r))^3} d r
        &\le \int_0^\infty e^{- \omega r} \frac{1}{\Bigl(\tau_0 \cosh (\omega r) + \frac{\tau_1}{\omega} \sinh (\omega r)\Bigr)^3} d r \\
        &\le \int_0^\infty e^{2 \omega r} \frac{16 \lambda}{\Bigl(\tau_0 (1 + e^{2 \omega r}) + \frac{\tau_1}{\omega} (e^{2 \omega r} - 1) \Bigr)^3} d r \\
        &\le - \frac{4 \lambda}{\omega (\tau_0 + \frac{\tau_1}{\omega})} \biggl[\Bigl(\tau_0 (1 + e^{2 \omega r}) + \frac{\tau_1}{\omega} (e^{2 \omega r} - 1) \Bigr)^{-2} \biggr]_0^\infty \\
        &\le \frac{2 \lambda}{\omega (\frac{\tau_1}{\omega} + \tau_0) \, \tau_0}.
\end{align*}
Then, as soon as $\tau_0$ is fixed, we get an expansion of $\mu_\infty$ with respect to $\tau_1$ with \eqref{eq:integral_gamma}:
\begin{equation}\label{eq:gamma-infini-app}
    \mu_\infty = \frac{\tau_1}{\omega} + \tau_0 + \O \( \frac{\ln{\tau_1}}{\tau_1} \).
\end{equation}

\subsection{Back to the PDE}

We now address the general case, in the sense that $u_0$ need not be
Gaussian. As announced in Proposition~\ref{prop:repulsive}, change the unknown $u$ to $v$, through the formula
\begin{equation}\label{eq:uv-repulsive}
  u(t,x)
  =\frac{1}{\tau_-(t)^{d/2}}v\left(t,\frac{x}{\tau_-(t)}\right)
\exp \Big({i\frac{\dot\tau_-(t)}{\tau_-(t)}\frac{|x|^2}{2}} \Big) \, ,
\end{equation}
with $\tau_-$ solution to \eqref{eq:tau-moins}, that is, 
\eqref{eq:tau0} \emph{where the last term is 
discarded} (this simplifies a little bit the computations, and the
discarded term brings no extra information any way). Then
\eqref{eq:logNLSquad} is equivalent, up to an irrelevant time
dependent phase (like previously),
to
\begin{equation}
  \label{eq:v}
  i\d_t v +\frac{1}{2\tau_-^2} \Delta v = \lambda |y|^2 v
  +\lambda v\ln |v|^2\, ,\quad v_{\mid t=0} =u_0 ,
\end{equation}
provided that we assume
\begin{equation}\label{eq:CItau}
  \tau_-(0)=1,\quad \dot \tau_-(0)=0.
\end{equation}
\subsubsection{Hamiltonian structure and consequences}
Set
\begin{align*}
  \mathcal E (t)& = \underbrace{\frac{1}{2\tau_-(t)^2}\|\nabla v(t)\|_{L^2}^2
  }_{=:\mathcal
    E_{\rm kin}(t)}+ 
  \lambda \int_{\R^d}|y|^2|v(t,y)|^2dy + \lambda
  \int_{\R^d}|v(t,y)|^2\ln |v(t,y)|^2dy .
\end{align*}
We compute
\begin{equation*}
  \dot  {\mathcal E} (t) =-2\frac{\dot\tau_-(t)}{\tau_-(t)} \mathcal  E_{\rm kin}(t).
\end{equation*}
We readily infer, with the same proof as in the case $\omega=0$ given
in \cite[Lemma~4.1]{CaGa18}, as already sketched in the proof of
Lemma~\ref{lem:apriori-v-partial}: 
\begin{lemma}\label{lem:apv}
 For $u_0\in \Sigma$ and $\lambda>0$, there holds
\begin{equation*}
 \sup_{t\ge 0}\left(\int_{{\mathbb R}^d}\left(1+|y|^2+\left|\ln
    |v(t,y)|^2\right|\right)|v(t,y)|^2dy +\frac{1}{\tau_-(t)^2}\|\nabla
  v(t)\|^2_{L^2({\mathbb R}^d)}\right)<\infty, 
\end{equation*} 
and
\begin{equation*}
 \int_0^\infty \frac {\dot \tau_-(t)}{\tau_-^3(t)}\|\nabla
  v(t)\|^2_{L^2({\mathbb R}^d)} dt<\infty.
\end{equation*}
\end{lemma}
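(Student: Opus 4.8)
The plan is to reproduce, in the slightly simpler setting of \eqref{eq:v}, the argument already sketched for Lemma~\ref{lem:apriori-v-partial} (which itself follows \cite[Lemma~4.1]{CaGa18}); the only differences are that here the full spatial potential is $\lambda|y|^2$ and there is no transverse variable. First I observe that, by \eqref{eq:CItau} and \eqref{eq:tau-moins}, one has $\ddot\tau_->0$ as long as $\tau_->0$, whence $\tau_-(t)\ge 1$ and $\dot\tau_-(t)\ge 0$ for all $t\ge 0$; the dissipation identity $\dot{\mathcal E}(t)=-2\tfrac{\dot\tau_-(t)}{\tau_-(t)}\mathcal E_{\rm kin}(t)$ then shows that $\mathcal E$ is non-increasing on $[0,\infty)$. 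All the estimates below are first carried out on the regularized equation (replace $\ln|v|^2$ by $\ln(\eps+|v|^2)$), whose solutions are smooth enough to make this identity rigorous; the resulting bounds are uniform in $\eps$ and pass to the limit.

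Next I split $\mathcal E=\mathcal E_+-\mathcal E_-$ into nonnegative pieces,
\[
  \mathcal E_+(t)=\frac{1}{2\tau_-(t)^2}\|\nabla v(t)\|_{L^2}^2+\lambda\int_{\R^d}|y|^2|v(t,y)|^2\,dy+\lambda\int_{|v(t)|\ge 1}|v(t,y)|^2\ln|v(t,y)|^2\,dy,
\]
\[
  \mathcal E_-(t)=\lambda\int_{|v(t)|<1}|v(t,y)|^2\ln\frac{1}{|v(t,y)|^2}\,dy\ge 0,
\]
so that monotonicity of $\mathcal E$ gives $\mathcal E_+(t)\le\mathcal E(0)+\mathcal E_-(t)$. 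Using the elementary bound $y^2\ln(1/y^2)\le C_\eps\,y^{2-\eps}$ for $0\le y\le 1$, I get $\mathcal E_-(t)\lesssim\|v(t)\|_{L^{2-\eps}}^{2-\eps}$; a weighted Gagliardo--Nirenberg inequality $\|v\|_{L^{2-\eps}}\lesssim\|v\|_{L^2}^{1-\delta(\eps)}\|\,|y|v\|_{L^2}^{\delta(\eps)}$ with $\delta(\eps)\to 0$ as $\eps\to 0$, together with the conservation of mass $\|v(t)\|_{L^2}=\|u_0\|_{L^2}$ for \eqref{eq:v} and the fact that $\|\,|y|v\|_{L^2}^2\le\lambda^{-1}\mathcal E_+$, yields $\mathcal E_-(t)\lesssim 1+\mathcal E_+(t)^{\delta'(\eps)}$ with $\delta'(\eps)\to 0$. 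Fixing $\eps$ small enough that $\delta'(\eps)\le\tfrac12$, the inequality $\mathcal E_+(t)\le\mathcal E(0)+C\bigl(1+\mathcal E_+(t)^{1/2}\bigr)$ forces $\sup_{t\ge 0}\mathcal E_+(t)<\infty$. Hence each term of $\mathcal E_+$ is bounded, $\mathcal E_-$ is bounded, and since $\int|v|^2\bigl|\ln|v|^2\bigr|=\int_{|v|\ge 1}|v|^2\ln|v|^2+\lambda^{-1}\mathcal E_-$, the first displayed bound of the lemma follows, along with the stated bound on $\tfrac1{\tau_-^2}\|\nabla v\|_{L^2}^2$.

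Finally, $\mathcal E=\mathcal E_+-\mathcal E_-$ is now bounded and non-increasing, hence has a finite limit, so that $\int_0^\infty\bigl(-\dot{\mathcal E}(t)\bigr)\,dt=\mathcal E(0)-\lim_{t\to\infty}\mathcal E(t)<\infty$; recalling $\mathcal E_{\rm kin}=\tfrac1{2\tau_-^2}\|\nabla v\|_{L^2}^2$, this is precisely $\int_0^\infty\tfrac{\dot\tau_-(t)}{\tau_-(t)^3}\|\nabla v(t)\|_{L^2}^2\,dt<\infty$. The one genuinely delicate point is justifying the energy balance for the limiting low-regularity solution $v$, which need not lie in $H^2$: as usual one derives the \emph{inequality} $\mathcal E(t)\le\mathcal E(0)+\cdots$ at the level of the $\eps$-approximations and passes to the limit, using the lower semicontinuity of the $\Sigma$-type norms under weak convergence and the convergence of the logarithmic term provided by Lemma~\ref{lem:compact_Sigma}; everything else is the routine interpolation estimate above.
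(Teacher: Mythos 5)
Your argument is correct and is essentially the paper's own proof: the same decomposition $\mathcal E=\mathcal E_+-\mathcal E_-$ into nonnegative pieces, the same $L^{2-\eps}$ interpolation against mass and the momentum term to absorb $\mathcal E_-$ into a small power of $\mathcal E_+$, and the same observation that boundedness plus monotonicity of $\mathcal E$ gives the integrability of $\frac{\dot\tau_-}{\tau_-^3}\|\nabla v\|_{L^2}^2$. Your added remarks on the positivity of $\dot\tau_-$ and on justifying the energy balance via the $\eps$-regularized equation are consistent with what the paper delegates to \cite[Lemma~4.1]{CaGa18}.
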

\begin{proof}[Sketch of the proof]
Write $\mathcal E=\mathcal E_+-\mathcal E_-$, where
  \begin{equation*}
    \mathcal E_+ = \frac{1}{2\tau_-(t)^2}\|\nabla v(t)\|_{L^2}^2
 + 
  \lambda \int_{\R^d}|y|^2|v(t,y)|^2dy+ \lambda
  \int_{|v|>1}|v(t,y)|^2\ln |v(t,y)|^2dy.
\end{equation*}
Note that $\mathcal E_+$ is the sum of non-negative terms only.
Since $\mathcal E$ is non-increasing, for $t\ge 0$,
\begin{align*}
  \mathcal E_+(t) &\le \mathcal E(0)+\mathcal E_-(t)=  \mathcal E(0)+
  \lambda \int_{|v|\le 1}|v(t,y)|^2\ln \frac{1}{|v(t,y)|^2}dy\\
  &\lesssim 1+ \int_{\R^d} |v(t,y)|^{2-\eps}dy,
\end{align*}
for any $\eps>0$ sufficiently small. Then $\|v\|_{L^{2-\eps}}\lesssim
\|v\|_{L^2}^{1-\delta(\eps)}\|yv\|_{L^2}^{\delta(\eps)}$ with
$\delta(\eps)\to 0 $ as $\eps\to 0$, and we conclude like in the proof
of Lemma~\ref{lem:apriori-v-partial}.
\end{proof}
Since the $L^2$-norm of $v$ is independent of time, the boundedness of the momentum of $v$ is an evidence that $v$ is not
dispersive, while the boundedness of $|v|^2$ in LlogL is an evidence
that $v$ does not grow to infinity. Therefore $\tau_-$ describes the
dispersive rate of any solution to \eqref{eq:repulsive}.
\subsubsection{Center of mass}
Introduce
\begin{equation*}
  I_1(t) := \IM \int_{{\mathbb R}^d} \overline v (t,y)\nabla v(t,y)dy \, ,\quad I_2(t) :=
  \int_{{\mathbb R}^d} y|v(t,y)|^2dy \, . 
\end{equation*}
We compute:
\begin{equation}\label{eq:evolI1I2}
  \dot I_1= -2\lambda I_2 \,,\qquad \dot I_2 = \frac{1}{\tau_-^2(t)}I_1 \, .
\end{equation}
Set $\tilde I_2:=\tau_- I_2$. We compute, in view of \eqref{eq:tau0},
\begin{equation*}
  \frac{d^2}{dt^2} {\tilde I_2} = \omega^2\tilde I_2,
\quad
\text{hence}
\quad
  \tau_-(t) I_2(t) = a_0 \cosh(\omega t)+b_0\frac{\sinh(\omega t)}{\omega}.
\end{equation*}
In view of \eqref{eq:CItau},
\begin{equation*}
  a_0=I_2(0),\quad b_0 = \dot I_2(0)= I_1(0). 
\end{equation*}
We infer
\begin{equation*}
  I_2(t) \Eq t \infty \frac{a_0+b_0}{2} \frac{e^{\omega t}}{\tau_-(t)}\Eq
    t \infty \frac{a_0+b_0}{2\mu_\infty}=
    \frac{I_1(0)+I_2(0)}{2\mu_\infty}. 
  \end{equation*}
  Unlike in the case $\omega=0$ considered in \cite{CaGa18,FeAPDE},
  the asymptotic center of mass of $v$ 
  is not zero in general (while it is always zero in the context of
  Theorem~\ref{theo:logNLSdisp}); this is like in the scattering case, where
  the asymptotic profile has no reason to be centered at the origin
  (as shown by the existence of wave operators). 
  \smallbreak
  
  Integrating in time the first equation in \eqref{eq:evolI1I2}, we infer
  \begin{equation*}
    I_1(t)\Eq t \infty -\lambda t  \frac{I_1(0)+I_2(0)}{\mu_\infty},
  \end{equation*}
  provided that $I_1(0)+I_2(0)\not =0$. This suggests that $v$ is
  oscillatory. 

  \subsubsection{More on large time behavior }

  At this stage, we have established two differences with the dynamics
  of logNLS (without potential): the dispersion is the one dictated by
  the repulsive harmonic potential, that is, exponential, and in the
  dispersive frame (working with $v$), the asymptotic center of mass
  is not necessarily  zero.

  The next natural question would be to
  decide between a general asymptotic behavior for $|v|$ (like in the
  case $\omega=0$) or a complete scattering theory. We are not able to
  fully validate the second option, which is the most likely in view
  of the result on the center of mass, but the Gaussian case shows
  that no universal profile must be expected for the large time
  behavior of $|v|$.

  To see this, consider $d=1$ and two Gaussian initial data
  \begin{equation*}
    u_{01}(x)= e^{-x^2/2 + i\beta_1 x^2/2},\quad u_{02}(x)= e^{-x^2/2 +
      i\beta_2 x^2/2},\quad \beta_j>0.
  \end{equation*}
In other words, we start from two distinct Gaussian data, whose moduli
(hence all Lebesgue norms and momenta, for instance)
are equal. The corresponding solutions $u_1$ and $u_2$ are given by
the formula presented in Section~\ref{sec:gaussian}, boiling the
description down to the analysis of the ODE \eqref{eq:tau0}:
\begin{equation*}
  u_j(t,x) =
  b_j(t)e^{-x^2/(2\tau_j(t)^2)+i\dot\tau_j(t)x^2/(2\tau_j(t))},\quad
  \tau_j(t)\Eq t \infty \mu_{\infty,j}e^{\omega t}.
\end{equation*}
In view of
Section~\ref{sec:dispersion}, we have
\begin{equation*}
  \mu_{\infty,j} =
  1+\frac{\beta_j}{\omega}+\O\(\frac{\ln\beta_j}{\beta_j}\)\text{ as
  }\beta_j\to \infty,
\end{equation*}
and so for $\beta_2\gg \beta_1\gg 1$, the corresponding functions
$v_j$
have different (asymptotic centers of mass and)  asymptotic
profiles. The fact that $|v_j|^2$ converges strongly in $L^1$ to the
corresponding limiting Gaussian is straightforward; see
e.g. \cite[Corollary~1.11]{CaGa18}.

\bibliographystyle{abbrv}
\bibliography{biblio}

\begin{thebibliography}{10}

\bibitem{LogSob}
C.~An{\'e}, S.~Blach{\`e}re, D.~Chafa{\"{\i}}, P.~Foug{\`e}res, I.~Gentil,
  F.~Malrieu, C.~Roberto, and G.~Scheffer.
\newblock {\em Sur les in\'egalit\'es de {S}obolev logarithmiques}, volume~10
  of {\em Panoramas et Synth\`eses [Panoramas and Syntheses]}.
\newblock Soci\'et\'e Math\'ematique de France, Paris, 2000.
\newblock With a preface by Dominique Bakry and Michel Ledoux.

\bibitem{AnCaSi15}
P.~Antonelli, R.~Carles, and J.~Drumond~Silva.
\newblock Scattering for nonlinear {S}chr{\"o}dinger equation under partial
  harmonic confinement.
\newblock {\em Comm. Math. Phys.}, 334(1):367--396, 2015.

\bibitem{Ar16}
A.~H. Ardila.
\newblock Orbital stability of {G}ausson solutions to logarithmic
  {S}chr\"odinger equations.
\newblock {\em Electron. J. Differential Equations}, pages Paper No. 335, 9,
  2016.

\bibitem{ACS20}
A.~H. {Ardila}, L.~{Cely}, and M.~{Squassina}.
\newblock {Logarithmic Bose-Einstein condensates with harmonic potential}.
\newblock {\em {Asymptotic Anal.}}, 116(1):27--40, 2020.

\bibitem{BEC}
A.~V. Avdeenkov and K.~G. Zloshchastiev.
\newblock Quantum {B}ose liquids with logarithmic nonlinearity:
  {S}elf-sustainability and emergence of spatial extent.
\newblock {\em J. Phys. B: Atomic, Molecular Optical Phys.}, 44(19):195303,
  2011.

\bibitem{BCST}
W.~Bao, R.~Carles, C.~Su, and Q.~Tang.
\newblock Error estimates of a regularized finite difference method for the
  logarithmic {S}chr\"odinger equation.
\newblock {\em SIAM J. Numer. Anal.}, 57(2):657--680, 2019.

\bibitem{BBJV17}
J.~Bellazzini, N.~Boussa\"{\i}d, L.~Jeanjean, and N.~Visciglia.
\newblock Existence and stability of standing waves for supercritical {NLS}
  with a partial confinement.
\newblock {\em Comm. Math. Phys.}, 353(1):229--251, 2017.

\bibitem{BiMy76}
I.~Bia{\l}ynicki-Birula and J.~Mycielski.
\newblock Nonlinear wave mechanics.
\newblock {\em Ann. Physics}, 100(1-2):62--93, 1976.

\bibitem{BiMy79}
I.~Bia{\l}ynicki-Birula and J.~Mycielski.
\newblock Gaussons: {S}olitons of the logarithmic {S}chr{\"o}dinger equation.
\newblock {\em Special issue on solitons in physics, Phys. Scripta},
  20:539--544, 1979.

\bibitem{Bouharia2015}
B.~Bouharia.
\newblock Stability of logarithmic {Bose-Einstein} condensate in harmonic trap.
\newblock {\em Modern Physcis Letters B}, 29(01):1450260, 2015.

\bibitem{BrezisLieb}
H.~Br\'{e}zis and E.~Lieb.
\newblock A relation between pointwise convergence of functions and convergence
  of functionals.
\newblock {\em Proc. Amer. Math. Soc.}, 88(3):486--490, 1983.

\bibitem{buljan}
H.~Buljan, A.~{\v{S}}iber, M.~Solja{\v{c}}i{\'c}, T.~Schwartz, M.~Segev, and
  D.~Christodoulides.
\newblock Incoherent white light solitons in logarithmically saturable
  noninstantaneous nonlinear media.
\newblock {\em Phys. Rev. E}, 68(3):036607, 2003.

\bibitem{CaSIMA}
R.~Carles.
\newblock Nonlinear {S}chr\"odinger equations with repulsive harmonic potential
  and applications.
\newblock {\em SIAM J. Math. Anal.}, 35(4):823--843, 2003.

\bibitem{CaDCDS}
R.~Carles.
\newblock Global existence results for nonlinear {S}chr\"odinger equations with
  quadratic potentials.
\newblock {\em Discrete Contin. Dyn. Syst.}, 13(2):385--398, 2005.

\bibitem{Ca11}
R.~Carles.
\newblock Nonlinear {S}chr{\"o}dinger equation with time dependent potential.
\newblock {\em Commun. Math. Sci.}, 9(4):937--964, 2011.

\bibitem{CaGa18}
R.~Carles and I.~Gallagher.
\newblock Universal dynamics for the defocusing logarithmic {S}chr{\"o}dinger
  equation.
\newblock {\em Duke Math. J.}, 167(9):1761--1801, 2018.

\bibitem{Caz83}
T.~Cazenave.
\newblock Stable solutions of the logarithmic {S}chr\"odinger equation.
\newblock {\em Nonlinear Anal.}, 7(10):1127--1140, 1983.

\bibitem{CazCourant}
T.~Cazenave.
\newblock {\em Semilinear {S}chr\"odinger equations}, volume~10 of {\em Courant
  Lecture Notes in Mathematics}.
\newblock New York University Courant Institute of Mathematical Sciences, New
  York, 2003.

\bibitem{CaHa80}
T.~Cazenave and A.~Haraux.
\newblock \'{E}quations d'\'evolution avec non lin\'earit\'e logarithmique.
\newblock {\em Ann. Fac. Sci. Toulouse Math. (5)}, 2(1):21--51, 1980.

\bibitem{CaLi82}
T.~Cazenave and P.-L. Lions.
\newblock Orbital stability of standing waves for some nonlinear
  {S}chr\"{o}dinger equations.
\newblock {\em Comm. Math. Phys.}, 85(4):549--561, 1982.

\bibitem{FeDCDS}
G.~Ferriere.
\newblock The focusing logarithmic {S}chr\"{o}dinger equation: analysis of
  breathers and nonlinear superposition.
\newblock {\em Discrete Contin. Dyn. Syst.}, 40(11):6247--6274, 2020.

\bibitem{FeAPDE}
G.~Ferriere.
\newblock Convergence rate in {W}asserstein distance and semiclassical limit
  for the defocusing logarithmic {S}chr\"{o}dinger equation.
\newblock {\em Anal. PDE}, 14(2):617--666, 2021.

\bibitem{GuLoNi10}
P.~Guerrero, J.~L{\'o}pez, and J.~Nieto.
\newblock Global solvability of the 3d logarithmic schr{\"o}dinger equation.
\newblock {\em Nonlinear Analysis: Real World Applications}, 11(1):79--87,
  2010.

\bibitem{Hag80}
G.~A. Hagedorn.
\newblock Semiclassical quantum mechanics. {I}. {T}he {$\hbar \rightarrow 0$}
  limit for coherent states.
\newblock {\em Comm. Math. Phys.}, 71(1):77--93, 1980.

\bibitem{Hag81}
G.~A. Hagedorn.
\newblock Semiclassical quantum mechanics. {III}. {T}he large order asymptotics
  and more general states.
\newblock {\em Ann. Physics}, 135(1):58--70, 1981.

\bibitem{hansson}
T.~Hansson, D.~Anderson, and M.~Lisak.
\newblock Propagation of partially coherent solitons in saturable logarithmic
  media: A comparative analysis.
\newblock {\em Phys. Rev. A}, 80(3):033819, 2009.

\bibitem{HayashiM2018}
M.~Hayashi.
\newblock A note on the nonlinear {S}chr\"{o}dinger equation in a general
  domain.
\newblock {\em Nonlinear Anal.}, 173:99--122, 2018.

\bibitem{Hef85}
E.~F. Hefter.
\newblock Application of the nonlinear {S}chr\"odinger equation with a
  logarithmic inhomogeneous term to nuclear physics.
\newblock {\em Phys. Rev. A}, 32:1201--1204, 1985.

\bibitem{Hel75}
E.~J. Heller.
\newblock Time dependent approach to semiclassical dynamics.
\newblock {\em J. Chem. Phys.}, 62(1):1544--1555, 1975.

\bibitem{Hepp}
K.~Hepp.
\newblock The classical limit for quantum mechanical correlation functions.
\newblock {\em Comm. Math. Phys.}, 35:265--277, 1974.

\bibitem{KEB00}
W.~Krolikowski, D.~Edmundson, and O.~Bang.
\newblock Unified model for partially coherent solitons in logarithmically
  nonlinear media.
\newblock {\em Phys. Rev. E}, 61:3122--3126, 2000.

\bibitem{DMFGL03}
S.~D. Martino, M.~Falanga, C.~Godano, and G.~Lauro.
\newblock Logarithmic {S}chr{\"o}dinger-like equation as a model for magma
  transport.
\newblock {\em Europhys. Lett.}, 63:472--475, 2003.

\bibitem{ScottShertzer18}
T.~C. Scott and J.~Shertzer.
\newblock Solution of the logarithmic {S}chr\"odinger equation with a {C}oulomb
  potential.
\newblock {\em J. Phys. Commun.}, 2(7):075014, 2018.

\bibitem{ShertzerScott20}
J.~Shertzer and T.~C. Scott.
\newblock Solution of the {3D} logarithmic {S}chr\"odinger equation with a
  central potential.
\newblock {\em J. Phys. Commun.}, 4(6):065004, 2020.

\bibitem{Vi03}
C.~Villani.
\newblock {\em Topics in optimal transportation}, volume~58 of {\em Graduate
  Studies in Mathematics}.
\newblock American Mathematical Society, Providence, RI, 2003.

\bibitem{Weissler78}
F.~B. Weissler.
\newblock Logarithmic {S}obolev inequalities for the heat-diffusion semigroup.
\newblock {\em Trans. Amer. Math. Soc.}, 237:255--269, 1978.

\bibitem{yasue}
K.~Yasue.
\newblock Quantum mechanics of nonconservative systems.
\newblock {\em Annals Phys.}, 114(1-2):479--496, 1978.

\bibitem{Zlo10}
K.~G. Zloshchastiev.
\newblock Logarithmic nonlinearity in theories of quantum gravity: {O}rigin of
  time and observational consequences.
\newblock {\em Grav. Cosmol.}, 16:288--297, 2010.

\bibitem{Zlo11}
K.~G. Zloshchastiev.
\newblock Spontaneous symmetry breaking and mass generation as built-in
  phenomena in logarithmic nonlinear quantum theory.
\newblock {\em Acta Phys. Polon. B}, 42(2):261--292, 2011.

\end{thebibliography}
\end{document}